\documentclass[aop, preprint]{imsart}

\RequirePackage{amsthm,amsmath,amsfonts,amssymb}
\RequirePackage[numbers,sort&compress]{natbib}
\RequirePackage[colorlinks,citecolor=blue,urlcolor=blue]{hyperref}
\RequirePackage{graphicx}
\usepackage{adjustbox}
\usepackage{mathrsfs}
\usepackage{mathtools}
\usepackage{epsfig}  		
\usepackage{epic,eepic}       
\usepackage{tikz}
\DeclareUnicodeCharacter{2212}{-}

\startlocaldefs
\theoremstyle{plain}

\newtheorem{theorem}{Theorem}[section]
\newtheorem{lemma}[theorem]{lemma}
\newtheorem{proposition}[theorem]{Proposition}
\newtheorem{conjecture}[theorem]{Conjecture}
\theoremstyle{remark}

\newtheorem{remark}[theorem]{Remark}

\newtheorem{cor}[theorem]{Corollary}


\newcommand{\E}[1]{\mathbb{E}\left\{#1\right\}}
  
\renewcommand{\P}[1]{\mathbb{P}\left\{#1\right\}}


\newcommand{\ind}[1]{\mathbf{1}_{[#1]}}
\endlocaldefs

\begin{document}

\begin{frontmatter}
\title{The Fyodorov--Hiary--Keating Conjecture\\ on Mesoscopic Intervals}
\runtitle{The Fyodorov--Hiary--Keating Conjecture on Mesoscopic Intervals}

\begin{aug}
\author[A,B]{\fnms{Louis-Pierre}~\snm{Arguin}\ead[label=e1]{arguin@maths.ox.ac.uk}},
\author[A]{\fnms{Jad}~\snm{Hamdan}\ead[label=e2]{hamdan@maths.ox.ac.uk}}
\address[A]{Mathematical Institute,
University of Oxford\printead[presep={,\ }]{e1,e2}}
\address[B]{Baruch College and Graduate Center,
City University of New York}
\end{aug}

\begin{abstract}
We derive precise upper bounds for the maximum of the Riemann zeta function on a typical short interval of the critical line. We show that for fixed $\theta\in(-1,0]$, large $T$, and $y\geq 2$ satisfying $y=O(\log\log T/\log\log\log T)$, the proportion of points $t\in [T,2T]$ for which
\begin{align*}
    \max_{|h|\leq \log^\theta T}\big|\zeta(&\tfrac{1}{2}+it+ih)\big|>e^{y} \cdot e^{S\sqrt{(\log\log T)|\theta|/2}}\frac{(\log T)^{(1+\theta)}}{(\log\log T)^{3/4}}
\end{align*}
 is bounded above by a constant times $y\exp({-2y-y^2/((1+\theta)\log\log T)})$, where $S=S(t)$ is a quantity whose value distribution is approximately that of a standard Gaussian. Up to a multiplicative constant, this settles the upper bound of a conjecture of Fyodorov--Hiary--Keating which was only known in the leading order for $\theta\in(-1,0)$.
 
 Using similar techniques, we also derive upper bounds for the second moment of the zeta function on such intervals. We show that for large $T$, the proportion of $t\in [T,2T]$ for which
 \begin{align*}
    \frac{1}{\log^\theta T}\int_{-\log^\theta T}^{\log^\theta T} \big|\zeta(&\tfrac{1}{2}+it+ih)\big|^2\mathrm{d}h > A e^{S\sqrt{2|\theta|\log\log T}} \frac{(\log T)^{(1+\theta)}}{\sqrt{\log\log T}}
 \end{align*}
 tends to zero as $A\to\infty$, for the same $S$ as above.
 This proves a weak form of another conjecture of Fyodorov--Keating and generalizes a result of Harper, which is recovered at $\theta = 0$ (in which case $S$ is defined to be zero). Our proofs use an adaptation of the recursive scheme introduced by one of the authors, Bourgade and Radziwiłł in \cite{FHK1}.
\end{abstract}

\begin{keyword}[class=MSC]
\kwd[Primary ]{60G70}
\kwd[; secondary ]{11M06}
\kwd{60F10}
\kwd{60G60}
\end{keyword}

\begin{keyword}
\kwd{Large deviations}
\kwd{Riemann zeta function}
\kwd{branching random walk}
\end{keyword}

\end{frontmatter}
\setcounter{tocdepth}{2}
\tableofcontents

\section{Introduction}\label{intro}

\subsection{Maxima of the Riemann zeta function in short intervals}

The value distribution of the Riemann zeta function $\zeta(s)$ on the line $\Re(s)=1/2$ is of fundamental importance in number theory. It is tightly linked to the distribution of its zeros, which encode the distribution of prime numbers via the explicit formula (see Theorem 5.1 in \cite{KoukoulopoulosBook}). A classical result in this vein is Selberg's central limit theorem \cite{selberg, selberg2}, which states that at a typical point $s=1/2+it$, $\log |\zeta(s)|$ behaves like a Gaussian random variable for large $t$. More precisely, if $\tau$ is uniformly distributed on $[T,2T]$, then
\begin{equation}
\label{eqn: selberg}
    \P{\log|\zeta(1/2+i\tau)|>\sqrt{\tfrac{1}{2}\log\log T}\cdot y} \sim \int_{y}^\infty \frac{e^{-z^2/2}}{\sqrt{2\pi}}\mathrm{d}z,\quad y\in \mathbb{R}
\end{equation}
as $T\to\infty$, suggesting a typical value of the order of $\sqrt{\log\log T}$ on such dyadic  intervals. This Gaussian tail was also recently shown to persist in the large deviations regime \cite{SoundMoments, arguinbailey1, arguinbailey2}.


 As was first noticed by Bourgade \cite{bourgademeso}, logarithmic correlations between $\log|\zeta(1/2+i\tau+ih)|$ and $\log|\zeta(1/2+i\tau+ih')|$ begin to appear when $h$ and $h'$ are sufficiently close. They can play a significant role when studying the distribution of $\log|\zeta|$ on typical \textit{short} intervals of the critical line, meaning either those of constant length, or \textit{mesoscopic} intervals of length proportional to $(\log T)^\theta$ for $\theta\in (-1,0)$. For instance, Fyodorov-Hiary-Keating \cite{fyodorovhiarykeating} and Fyodorov-Keating \cite{fyodorovkeating} made the following striking conjecture about the distribution of local maxima of $\log|\zeta|$ on such intervals.
 \begin{conjecture}[\cite{fyodorovkeating}, Section 2(c)(ii)]\label{conj}
 Let $\theta\in (-1,0]$. Then
\begin{align*}
    &\max_{|h|\leq \log^\theta T} \log\big|\zeta(\tfrac{1}{2}+i\tau+ih)\big| \\
    &\sim (1+\theta)\log\log T-\frac{3}{4}\log\big((1+\theta)\log\log T\big)+\sqrt{\tfrac{|\theta|}{2}\log\log T}\cdot \mathcal{N}(0,1)+\mathcal{M},
\end{align*}
where $\mathcal{N}(0,1)$ denotes a standard Gaussian random variable and $\mathcal{M}$ a random variable that is $O(1)$ in probability, independent of $\mathcal{N}(0,1)$, and satisfies the asymptotic $\mathbb{P}(\mathcal{M}>y)\sim Cye^{-2y}$ for some constant $C>0$.
 \end{conjecture}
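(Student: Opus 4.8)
The plan is to prove the upper bound with the precision claimed in Conjecture~\ref{conj} --- only its leading term $(1+\theta)\log\log T$ having been available before --- and to recall at the end how the leading-order lower bound is supplied; the quantitative tail estimate produced along the way is the ``strong form'' mentioned in the abstract. Write $L=\log^\theta T$ and $n=(1+\theta)\log\log T$. The first step is the standard reduction of $\log|\zeta|$ on the critical line to a short Dirichlet polynomial: using the unconditional inputs of \cite{FHK1} (Riemann--Siegel / explicit formula), one shows that outside a set of $t\in[T,2T]$ of measure $o(T)$, and uniformly in $|h|\le L$,
\[
  \log\big|\zeta(\tfrac12+it+ih)\big|\ \le\ \Re\sum_{p\le X}\frac{p^{-i(t+h)}}{p^{1/2}}\cdot\frac{\log(X/p)}{\log X}\ +\ O(1)
\]
for a suitable $X=T^{o(1)}$, the contributions of prime powers and of nearby zeros being negligible on the relevant scale.

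Next I would exhibit the two-scale structure, which is where Bourgade's observation about the onset of correlations enters. Split the primes at $\log p=(\log T)^{|\theta|}=1/L$. On the \emph{homogeneous block} $\log p\le 1/L$ the phase $p^{-ih}$ is essentially $1$ throughout $|h|\le L$ (since $|h|\log p\le 1$), so this block is $h$-independent to leading order and, by a Selberg-type central limit argument, is approximately Gaussian of variance $\tfrac12\sum_{\log p\le 1/L}p^{-1}\sim\tfrac12|\theta|\log\log T$; this is precisely the term $\sqrt{(\log\log T)|\theta|/2}\cdot S$, and one conditions on its value from here on. On the \emph{branching block} $1/L<\log p\le X$, reparametrise by $\ell=\log\log p$, running over an interval of length $n$; at scale $\ell$ the phases separate $\asymp Le^{\ell}$ values of $h$, so the discretised interval carries $\asymp 1$ branch at the bottom of the block and $\asymp e^{n}=(\log T)^{1+\theta}$ at the top, and the partial sums behave like a branching random walk of depth $n$, branching number $e$, increment variance $\tfrac12\,d\ell$ --- exactly the normalisation whose maximum sits at $n-\tfrac34\log n+O(1)=(1+\theta)\log\log T-\tfrac34\log\log\log T+O(1)$.

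The heart of the argument, and the step I expect to be the main obstacle, is a recursive first-moment estimate along a barrier, adapting the scheme of \cite{FHK1} to the mesoscopic setting. One decomposes $\ell\in[\,|\theta|\log\log T,\log\log T\,]$ into the multiscale blocks of \cite{FHK1} ($O(\log\log\log T)$ of them), reveals the prime sums block by block, and --- on the event that the maximum over the discretised $h$ of the partial sum through block $k$ stays near a linear-in-$\ell$ barrier with final height $m+\sqrt{(\log\log T)|\theta|/2}\,S$, where $m=n-\tfrac34\log n+y\ (=(1+\theta)\log\log T-\tfrac34\log\log\log T+y+O(1))$ --- bounds the conditional probability that this persists through block $k+1$ by a mean value theorem for Dirichlet polynomials, namely by the first moment of the number of discretised $h$ at which the partial sum exceeds the barrier. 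A naive union bound over the $e^{n}$ top-level branches of the Gaussian tail $\P{\mathcal N(0,\tfrac{n}{2})>m}\le e^{-m^2/n}$ already produces $e^{n}\cdot e^{-m^2/n}=n^{3/2}e^{-2y-(y-\frac34\log n)^2/n}$; imposing the barrier at \emph{every} scale (the ballot-type refinement) upgrades the prefactor $n^{3/2}$ to the sharp $O(y)$, which yields
\[
  \tfrac1T\,\mathrm{meas}\Big\{t\in[T,2T]:\ \max_{|h|\le L}\log\big|\zeta(\tfrac12+it+ih)\big|>m+\sqrt{(\log\log T)|\theta|/2}\;S\Big\}\ \ll\ Cy\,e^{-2y-y^2/((1+\theta)\log\log T)}
\]
for $1\le y\ll\log\log T$, which is the bound of the abstract. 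Three features make the recursion delicate: (i) the effective depth is $n=(1+\theta)\log\log T$ and the branching is correspondingly thinned, so the \cite{FHK1} bookkeeping must be redone with $n$ in place of $\log\log T$ throughout; (ii) the barrier now starts at the random height coming from the homogeneous block rather than at $0$, so every first-moment estimate is taken conditionally on that block and the whole recursion must be uniform over the Gaussian range of $S$; and (iii) to reach $y$ as large as a constant times $\log\log T$ one must propagate the Gaussian correction $e^{-y^2/n}$ through the recursion instead of absorbing it, which tightens the per-scale error analysis.

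Finally, integrating the tail bound shows that $\max_{|h|\le L}\log|\zeta(\tfrac12+i\tau+ih)|-(1+\theta)\log\log T+\tfrac34\log\log\log T-\sqrt{(\log\log T)|\theta|/2}\,S$ is tight with a right tail of order $Cye^{-2y}$ --- this is the $O(1)$ variable $\mathcal M$, and its asymptotic independence from $S\approx\mathcal N(0,1)$ follows because $S$ is measurable with respect to the deep-prime block while $\mathcal M$ is governed by the branching block, the cross-covariance in the model being $O(1)$. The matching lower bound, to leading order, comes from running the usual second moment / Paley--Zygmund argument on the Dirichlet-polynomial model with the homogeneous block frozen at $S$, and is not the focus here; the second-moment statement of the abstract, $\tfrac1L\int_{|h|\le L}|\zeta(\tfrac12+it+ih)|^2\,\mathrm{d}h\ll e^{2S}(\log T)^{1+\theta}/\sqrt{\log\log T}$, is obtained by applying the same conditional barrier analysis to this integral rather than to the maximum, the gain of $1/\sqrt{\log\log T}$ over the trivial bound being the critical-multiplicative-chaos phenomenon identified by Harper at $\theta=0$.
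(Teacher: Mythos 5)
Your proposal captures the structure of the paper's proof of Theorem~\ref{UB}, which is the upper-bound half of the conjecture: the split of primes at $\log p=(\log T)^{|\theta|}$, the Gaussian contribution from the homogeneous block, the effective branching random walk of depth $n=(1+\theta)\log\log T$ on the remaining primes, and the FHK1-style multiscale barrier recursion that upgrades the naive union bound $n^{3/2}e^{-2y-y^2/n}$ to the sharp $ye^{-2y-y^2/n}$. This is the same route the paper takes, down to the choice to run the recursion on $\mathscr{S}_k=S_{t_\theta+k}-S_{t_\theta}$ and to condition on the homogeneous block.

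Two of the steps you treat heuristically are where the paper does substantive work, and your stated mechanisms would not deliver them as written. First, the homogeneous block is \emph{not} ``essentially $h$-independent'' in the naive sense: for $\log p\le 1/L$ and $|h|\le L$ the phase $|h\log p|$ is bounded by $1$ but not small, so $S_{t_\theta}(h)-S_{t_\theta}(0)$ has variance of order $1$. One must show that $\max_{|h|\le L}|S_{t_\theta}(h)-S_{t_\theta}(0)|$ has a Gaussian tail --- the paper does this by a chaining argument (Theorem~\ref{tightness} and Lemma~\ref{MV}) --- and this fluctuation is then absorbed into the $O(1)$ term $\mathcal{M}$; without it, conditioning on the midpoint value of $S$ does not control the full interval. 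Second, the claimed independence of $\mathcal{M}$ from $S$ does not follow from ``the cross-covariance being $O(1)$''; bounded covariance does not give asymptotic independence, and the relevant covariance is in fact essentially zero by orthogonality. The actual mechanism is that Dirichlet polynomials supported on disjoint prime ranges have a product structure under the mean-value theorem (Lemma~\ref{splittinglemma}), which lets the indicator of $\{S_{t_\theta}(0)-S_{t_\theta}(h)\in[u,u+1]\}$ decouple from the barrier recursion (Lemmas~\ref{basedecoupling} and~\ref{lem3}); Theorem~\ref{auxUB} then produces a tail bound \emph{uniform in $u$} multiplied by a Gaussian factor in $u$, and the full tail is recovered by summing over $u$. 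You correctly identify the target (``uniform over the Gaussian range of $S$'') but the argument you give for it is the wrong one. A smaller technical point you omit (Remark~\ref{mesomaxsubtlety}) is that the Dirichlet polynomials approximating the increment indicators for $\mathscr{S}_k$ are longer than their FHK1 analogues by a factor $e^{t_\theta}$, and the union bound over discretized $h$ only closes because the interval is shorter by the same factor.
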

\noindent This prediction is based on an analogous conjecture for the maximum of (the modulus of) the characteristic polynomial of a random unitary matrix, following the Keating-Snaith paradigm \cite{jonnina}. In that case, Fyodorov--Keating give the exact distribution for $\mathcal{M}$, which was recognized in \cite{subagzeitouni} to be that of a sum of two independent Gumbel random variables. 

    Even at $\theta=0$, the logarithmic correlations appear through the coefficient $3/4$ in the second-order term. Indeed, one can argue (e.g. using the functional equation for the zeta function) that the analysis of $\max_{|h|\leq 1}\log |\zeta(1/2+i\tau+ih)|$ can be reduced to a discrete set containing roughly $(\log T)$ points, each of which is approximately Gaussian with variance $\tfrac{1}{2}\log\log T$ by Selberg's theorem. If one were to model the maximum by that of $(\log T)$ independent Gaussian random variables of the same variance, a straightforward calculation yields a sub-leading order of $-(1/4)\log\log\log T$ (see, e.g., \cite{harperFHK}); a larger maximum, as one would expect by Slepian's lemma. 

\bigskip

Conjecture \ref{conj} has seen significant progress since its formulation, particularly in the $\theta=0$ case. Following initial progress by Najnudel \cite{naj}, Arguin, Bourgade, Belius, Radziwiłł and Soundararajan \cite{ABBRS2016}, and Harper \cite{harper2019}, sharp upper and lower bounds were established by Arguin, Bourgade and Radziwiłł \cite{FHK1, FHK2} using ideas from the study of branching random walks. On the random-matrix-theoretic side, the leading order was established by Arguin, Belius and Bourgade \cite{arguinbourgadebelius}, the subleading order by Paquette and Zeitouni \cite{paquettezeitouni1} and the tightness of the recentered maximum by Chhaibi, Madaule and Najnudel \cite{CMN}. Finally, a recent work of Paquette and Zeitouni \cite{paquettezeitouni2} proved the distributional convergence of the maximum to a sum of a Gumbel random variable and a ``derivative martingale''; it remains open whether the latter is itself Gumbel-distributed.

For $\theta<0$, the maximum is only known in the leading order, following work by one of the authors, Ouimet, and Radziwiłł \cite{AOR}. Our first result improves their upper bound to $O(1)$ precision.
\begin{theorem}[Upper bound for the local maximum]\label{UB}
    Let $C>0$ be an arbitrary constant and $\theta\in(-1,0]$. Then uniformly in $2\leq  y< {C\log\log T/\log\log\log T}$ and large enough $T$,
    \begin{align}
    \begin{split}
           \mathbb{P}\bigg\{\max_{|h|\leq \log^\theta T} \big|\zeta\big(\tfrac{1}{2}+i(\tau+h)\big) P_\tau(\theta)\big|&>e^{y}\frac{(\log T)^{(1+\theta)}}{(\log\log T)^{3/4}}\bigg\} \leq C' ye^{-2y-\tfrac{y^2}{(1+\theta)\log\log T}}
    \end{split}
    \end{align}
    for some constant $C'>0$, where $P_\tau(\theta)=\prod_{p\leq \exp((\log T)^{|\theta|})}(1-p^{-1/2-i\tau})$.
\end{theorem}
\noindent It follows by a standard moment computation (see, e.g., the proof of Selberg's theorem in \cite{radziwillsound}) that $\log|P_\tau(\theta)|/\sqrt{({|\theta|}/{2})\log\log T}$ converges in distribution to a standard Gaussian random variable as $T\to\infty$. Theorem \ref{UB} thus settles a strong form of the upper bound in Conjecture \ref{conj}, and explicitly identifies the source of the Gaussian term therein.

While the random-matrix-theoretic analogue of this conjecture has yet to be studied explicitly for $\theta<0$, we expect an analogue of Theorem \ref{UB} to be provable using the tools in \cite{paquettezeitouni2}. In light of a classical result of Diaconis and Evans \cite{diaconisevans}, this would likely take the form
\[ 
    \P{\max_{|h|\leq N^\theta} \big|\mathrm{P}_N(e^{ih})X_\theta\big| > e^{y}\frac{N^{(1+\theta)}}{(\log N)^{3/4}}} \leq Cye^{-2y-y^2/((1+\theta)\log N)}.
\]
where $\mathrm{P}_N(z)$ is the characteristic polynomial of an $N\times N$ Haar-unitary matrix $U_N$ and
\[
    X_\theta:=\exp\Big(\sum_{j=1}^{N^{|\theta|}}\frac{\text{Tr}(U_N^j)}{j}\Big).
\]
(See the introduction of \cite{arguinbourgadebelius} for more detail). 

\subsection{The partition function of the zeta function}

The \textit{moments} of $\zeta$ in short intervals 
\begin{equation}\label{shortmoment}
       \mathcal{Z}_\beta{(\theta)}:=\frac{1}{2\log ^\theta T} \int_{|h|\leq \log^\theta T} |\zeta(\tfrac{1}{2}+i(\tau+h))|^\beta\mathrm{d}h
\end{equation}
are closely related to the maximum. It is useful to think of $\mathcal{Z}_\beta{(\theta)}$ as a (random) {\it partition function} at inverse temperature $\beta$, associated to a system whose states are indexed by the interval of radius $\log^\theta T$ around $\tau$, and with energy $\log |\zeta(\tfrac{1}{2}+i(\tau+h))|$ at state $h$.

Upper and lower bounds for the leading order of $\mathcal{Z}_\beta{(\theta)}$ were established in \cite{AOR} for all $\theta>-1$, with the former holding unconditionally, and the latter assuming the Riemann hypothesis when $\theta> 3$. In all cases, one finds that $\mathcal{Z}_\beta (\theta)\asymp (\log T)^{f_\theta(\beta)}$ for some $f_\theta(\beta)$ with high probability; the same work also shows that a \textit{freezing transition} (conjectured in \cite{fyodorovkeating}) occurs at the critical parameter $\beta_c=2\sqrt{1+\min{(\theta,0)}}$, where $f_\theta(\beta)$ transitions from being quadratic in $\beta$ to linear. These bounds were later improved in \cite{arguinbailey1} for $\theta \in[0,3)$ and $\beta>2\sqrt{1+\theta}$, by studying the measure of high points of $\log |\zeta|$.

In the case $\theta=0, \beta=\beta_c=2$, Harper \cite{harper2019} proved the precise estimate
\begin{equation}\label{harper}
    \P{\mathcal{Z}_2(0) \geq A\frac{\log T}{\sqrt{\log \log T}}} \leq C\cdot\frac{\min\{\log A, \sqrt{\log\log T}\}}{A},
\end{equation}
which implies that $\mathcal{Z}_2(0)= O(\log T/\sqrt{\log\log T})$ with high probability. This beats the trivial bound of $O(\log T)$, which follows  from Markov's inequality and the classical second-moment estimate in Lemma \ref{secondmomentzeta}.
The additional $\sqrt{\log\log T}$ in the denominator was shown to stem from a connection to critical Gaussian multiplicative chaos \cite{PowellSurvey}, and is analogous to the so-called Seneta-Heyde normalization in that setting.

For $\theta<0$, it was anticipated in \cite[Section 4(b)]{fyodorovkeating} that $\mathcal{Z}_2{(\theta)}$ should asymptotically split into the product of two uncorrelated random variables: the first being roughly log-normal, and the second behaving similarly to $\mathcal{Z}_2{(0)}$ upon substituting $T$ by $\exp((\log T)^{1+\theta})$. Towards this conjecture, our main result establishes an upper bound for this partition function in mesoscopic intervals. It features an analogue of the log-normal factor in \cite[Section 4(b)]{fyodorovkeating} (recalling that $\log|P_\tau(\theta)|/\sqrt{({|\theta|}/{2})\log\log T}$ is asymptotically Gaussian), as well as a square root correction term as in Harper's result (which is recovered at $\theta=0$).

\begin{theorem}[Upper bound for the partition function at criticality]\label{moments}Fix $\theta\in(-1,0]$ and for any $T$, let $P_\tau(\theta)$ be as in Theorem \ref{UB}. Define
    \[
        \mathcal{Z}_2{(\theta)}:=\frac{1}{2\log^\theta T}\int_{|h|\leq \log^\theta T} \big|\zeta(\tfrac{1}{2}+i(\tau+h)) \big|^2\mathrm{d}h.
    \]
    Then if $\theta\in (-1,0)$, for any $\varepsilon>0$, there exists a constant $C=C(\varepsilon)>0$ such that 
    \[
        \P{\mathcal{Z}_2(\theta)|P_\tau(\theta)|^2 \geq  A \frac{(\log T)^{(1+\theta)}}{\sqrt{(1+\theta)\log\log T}}} \leq CA^{-1+\varepsilon}
    \]
    uniformly in large enough $T$ and $A\leq\sqrt{\log\log T}$. For $\theta=0$, there exists a $C>0$ such that
    \[
                \P{\mathcal{Z}_2(0) \geq A\frac{\log T}{\sqrt{\log\log T}} } \leq C\frac{\min\{\log A,\sqrt{\log\log T}\}}{A}
    \]
    uniformly in large $T$ and $A\geq 2$.
\end{theorem}
\noindent Our proof differs from the approach in \cite{harper2019} and yields a new proof of Harper's bound at $\theta=0$. We express $\mathcal{Z}_2(0)$ in terms of the typical measure of level sets of $\log |\zeta|$, similarly to the argument in Section 4 of \cite{arguinbailey1} except that theirs could not be made rigorous. In order to do so, we employ a novel ``partial barrier'' strategy which is explained in Section \ref{outlines}.

\subsection{Related results} In \cite{arguinbailey1}, the authors also studied the maximum over longer intervals, of length $(\log T)^\theta$ for $\theta>0$. Comparing the subleading order and tail of the maximum in that setting to Conjecture \ref{conj}, one finds a discontinuity as $\theta \downarrow 0$, representing a transition from log-correlated to i.i.d.\ statistics. By analyzing a random model for the zeta function, Dubach, Hartung, and one of the authors \cite{ADH} showed that the subleading order discontinuity can be resolved by letting $\theta$ approach zero at an appropriate rate, revealing an intermediate regime. Chang \cite{chang} then resolved the tail discontinuity for the same model in that regime. Similar phenomena have also been studied in the settings of branching Brownian motion, branching random walks, and random energy models \cite{KistlerSchmidt, BovierHartung}. All of these works are also related to a broader literature on log-correlated fields and Gaussian multiplicative chaos in connection with the Fyodorov-Hiary-Keating conjectures \cite{JLW, NSW, SW, remy}, recently surveyed by Bailey and Keating  \cite{BaileyKeating}.

\bigskip

\textsc{Notation.} Throughout our arguments, $\tau$ will always denote a random variable that is uniformly distributed in $[T,2T]$. For simplicity, we will use the notation 
$$\zeta_\tau(h):=\zeta(\tfrac{1}{2}+i(\tau+h)).$$ 
As was done above, we will let \[
t=\log\log T \text{ and } t_\theta=t|\theta|,
\]
{noting that in this notation
$$t(1+\theta)=t-t_\theta.$$
For any $\ell\geq 1$,  $\log_\ell t$ will denote the logarithm iterated $\ell$ times.}
We will use standard asymptotic notation, writing $f(T)=o(g(T))$ if $|f(T)/g(T)|$ tends to $0$ as ${T\to\infty}$, and $f(T)=O(g(T))$ or $f(T)\ll g(T)$ if $\limsup|f(T)/g(T)|$ is bounded. A subscripted parameter next to $o, O$ or $\ll$ means that the implicit constant is allowed to depend on said parameter. 
Lastly, we will use standard number theoretic notation for the following arithmetical functions: $\omega(n)$ is the number of distinct primes dividing $n$, $\Omega(n)$ the number of such primes counted with multiplicity, and $\mu$ is the Möbius function, which equals $-1$ (resp. $1$) on squarefree integers with an odd (resp. even) number of prime factors, and zero otherwise.

\subsection{Branching random walk model and proof outlines}\label{outlines} Our arguments are all centered around a branching random walk model for the zeta function originating in \cite{arguinbeliusharper}, which we now briefly recall (for more detail, see \cite{AOR, ABBRS2016}). For $k\leq t$, define the partial sums
\begin{equation}\label{Sdefinition}
        S_{k}(h)=\sum_{C<p\leq \exp(e^{k})} \Re\left(\frac{1}{p^{1/2+i(\tau+h)}}+\frac{1}{2}\frac{1}{p^{1+2i(\tau+h)}}\right).
\end{equation}
Each summand corresponds to a second-order Taylor approximation to $-\log |1-p^{-\tfrac{1}{2}-i\tau-ih}|$, and we view $S_k(h)$ as a proxy for $\log|\zeta(1/2+i\tau+ih)|$ when $k\approx t$. (The sum is started at a sufficiently large constant, in this case $C=\exp(e^{1000})$, for technical reasons.) We are then led to study the process $(S_{k}(h), |h|\leq \log^\theta T)$,
which by a standard estimate (see, e.g., Lemma 3.4 in \cite{AOR}), has covariances
\begin{align}\label{eq:correlations}
    \E{S_k(h)S_k(h')}= \frac{1}{2}\sum_{ p\leq \exp(e^k)} \frac{\cos(|h-h'|\log p)}{p}+O\left(\frac{\exp(2e^{k})}{T}\right).
\end{align}
For each $h$, we view $S_t(h)$ as an approximate random walk with $t$ steps: ignoring the fact that the error term in \eqref{eq:correlations} is large for $k\approx t$, the double-exponential indexing of the sum therein is chosen so as to make the increments $(S_k-S_{k-1})(h)$ have variance $$\frac{1}{2}\sum_{\exp(e^{k-1})<p\leq \exp(e^k)}\frac{1}{p}\approx \frac{1}{2}$$ by the prime number theorem. A similar calculation shows that for any $h,h'$, the increments of $S_t(h)$ and $S_t(h')$ are highly correlated up to $k\approx \log |h-h'|^{-1}$, after which they decorrelate rapidly. At intermediate times $k\leq t$, this implies that $S_k(h)\approx S_k(h')$ when $|h-h'|\leq e^{-k}$, meaning that there are roughly $e^k$ distinct ``random walks'' over $|h|\leq 1$.

This leads one to compare $(S_t(h),|h|\leq 1)$ to a branching random walk on a tree of depth $t$ in which each node has $e=2.718\dots$ offspring on average: to each edge, one attaches a Gaussian random variable with variance $1/2$, and to each leaf, the sum of the edge weights from that leaf to the root. The leaves then correspond to the values of $S_t(h)$ on a mesh of width $1/\log T$.

\bigskip

\begin{figure}
    \centering

\tikzset{every picture/.style={line width=0.75pt}} 

\begin{tikzpicture}[x=0.75pt,y=0.75pt,yscale=-1,xscale=1]

\draw    (460.39,1838.94) -- (466.7,1888.47) ;
\draw [color={rgb, 255:red, 74; green, 144; blue, 226 }  ,draw opacity=0.5 ][fill={rgb, 255:red, 74; green, 144; blue, 226 }  ,fill opacity=1 ][line width=3]    (428.83,1838.94) -- (435.15,1888.47) ;
\draw [color={rgb, 255:red, 0; green, 0; blue, 0 }  ,draw opacity=1 ]   (428.83,1838.94) -- (420.94,1888.47) ;
\draw [color={rgb, 255:red, 74; green, 144; blue, 226 }  ,draw opacity=0.5 ][fill={rgb, 255:red, 0; green, 0; blue, 0 }  ,fill opacity=1 ][line width=3]    (444.61,1789.4) -- (428.83,1838.94) ;
\draw [color={rgb, 255:red, 74; green, 144; blue, 226 }  ,draw opacity=0.5 ][line width=3]    (444.61,1789.4) -- (460.39,1838.94) ;
\draw [color={rgb, 255:red, 208; green, 2; blue, 27 }  ,draw opacity=0.5 ][line width=3]    (413.09,1740.19) -- (444.61,1789.4) ;
\draw [color={rgb, 255:red, 208; green, 2; blue, 27 }  ,draw opacity=0.5 ][line width=3]    (476.2,1690.65) -- (413.09,1740.19) ;
\draw [color={rgb, 255:red, 155; green, 155; blue, 155 }  ,draw opacity=1 ][fill={rgb, 255:red, 155; green, 155; blue, 155 }  ,fill opacity=1 ]   (476.2,1690.65) -- (539.32,1740.19) ;
\draw [color={rgb, 255:red, 155; green, 155; blue, 155 }  ,draw opacity=1 ][fill={rgb, 255:red, 155; green, 155; blue, 155 }  ,fill opacity=1 ]   (413.09,1740.19) -- (381.5,1789.4) ;
\draw  [color={rgb, 255:red, 0; green, 0; blue, 0 }  ,draw opacity=1 ][fill={rgb, 255:red, 0; green, 0; blue, 0 }  ,fill opacity=1 ] (474.23,1690.65) .. controls (474.23,1689.51) and (475.11,1688.59) .. (476.2,1688.59) .. controls (477.29,1688.59) and (478.18,1689.51) .. (478.18,1690.65) .. controls (478.18,1691.79) and (477.29,1692.72) .. (476.2,1692.72) .. controls (475.11,1692.72) and (474.23,1691.79) .. (474.23,1690.65) -- cycle ;
\draw    (342.39,1888.47) -- (603.89,1888.47) ;
\draw [shift={(605.89,1888.47)}, rotate = 180] [color={rgb, 255:red, 0; green, 0; blue, 0 }  ][line width=0.75]    (4.37,-1.32) .. controls (2.78,-0.56) and (1.32,-0.12) .. (0,0) .. controls (1.32,0.12) and (2.78,0.56) .. (4.37,1.32)   ;
\draw  [color={rgb, 255:red, 0; green, 0; blue, 0 }  ,draw opacity=1 ][fill={rgb, 255:red, 0; green, 0; blue, 0 }  ,fill opacity=1 ] (411.12,1740.19) .. controls (411.12,1739.05) and (412,1738.12) .. (413.09,1738.12) .. controls (414.18,1738.12) and (415.06,1739.05) .. (415.06,1740.19) .. controls (415.06,1741.33) and (414.18,1742.25) .. (413.09,1742.25) .. controls (412,1742.25) and (411.12,1741.33) .. (411.12,1740.19) -- cycle ;
\draw  [color={rgb, 255:red, 155; green, 155; blue, 155 }  ,draw opacity=1 ][fill={rgb, 255:red, 155; green, 155; blue, 155 }  ,fill opacity=1 ] (537.35,1740.19) .. controls (537.35,1739.05) and (538.23,1738.12) .. (539.32,1738.12) .. controls (540.41,1738.12) and (541.29,1739.05) .. (541.29,1740.19) .. controls (541.29,1741.33) and (540.41,1742.25) .. (539.32,1742.25) .. controls (538.23,1742.25) and (537.35,1741.33) .. (537.35,1740.19) -- cycle ;
\draw  [color={rgb, 255:red, 155; green, 155; blue, 155 }  ,draw opacity=1 ][fill={rgb, 255:red, 155; green, 155; blue, 155 }  ,fill opacity=1 ] (379.53,1789.4) .. controls (379.53,1788.26) and (380.41,1787.34) .. (381.5,1787.34) .. controls (382.59,1787.34) and (383.47,1788.26) .. (383.47,1789.4) .. controls (383.47,1790.54) and (382.59,1791.47) .. (381.5,1791.47) .. controls (380.41,1791.47) and (379.53,1790.54) .. (379.53,1789.4) -- cycle ;
\draw  [color={rgb, 255:red, 74; green, 144; blue, 226 }  ,draw opacity=1 ][fill={rgb, 255:red, 74; green, 144; blue, 226 }  ,fill opacity=1 ] (442.64,1789.4) .. controls (442.64,1788.26) and (443.52,1787.34) .. (444.61,1787.34) .. controls (445.7,1787.34) and (446.59,1788.26) .. (446.59,1789.4) .. controls (446.59,1790.54) and (445.7,1791.47) .. (444.61,1791.47) .. controls (443.52,1791.47) and (442.64,1790.54) .. (442.64,1789.4) -- cycle ;
\draw  [color={rgb, 255:red, 155; green, 155; blue, 155 }  ,draw opacity=1 ][fill={rgb, 255:red, 155; green, 155; blue, 155 }  ,fill opacity=1 ] (505.79,1789.72) .. controls (505.79,1788.58) and (506.67,1787.66) .. (507.76,1787.66) .. controls (508.85,1787.66) and (509.73,1788.58) .. (509.73,1789.72) .. controls (509.73,1790.86) and (508.85,1791.79) .. (507.76,1791.79) .. controls (506.67,1791.79) and (505.79,1790.86) .. (505.79,1789.72) -- cycle ;
\draw  [color={rgb, 255:red, 155; green, 155; blue, 155 }  ,draw opacity=1 ][fill={rgb, 255:red, 155; green, 155; blue, 155 }  ,fill opacity=1 ] (568.9,1789.72) .. controls (568.9,1788.58) and (569.79,1787.66) .. (570.88,1787.66) .. controls (571.97,1787.66) and (572.85,1788.58) .. (572.85,1789.72) .. controls (572.85,1790.86) and (571.97,1791.79) .. (570.88,1791.79) .. controls (569.79,1791.79) and (568.9,1790.86) .. (568.9,1789.72) -- cycle ;
\draw  [color={rgb, 255:red, 155; green, 155; blue, 155 }  ,draw opacity=1 ][fill={rgb, 255:red, 155; green, 155; blue, 155 }  ,fill opacity=1 ] (363.75,1838.94) .. controls (363.75,1837.8) and (364.63,1836.87) .. (365.72,1836.87) .. controls (366.81,1836.87) and (367.69,1837.8) .. (367.69,1838.94) .. controls (367.69,1840.08) and (366.81,1841) .. (365.72,1841) .. controls (364.63,1841) and (363.75,1840.08) .. (363.75,1838.94) -- cycle ;
\draw  [color={rgb, 255:red, 155; green, 155; blue, 155 }  ,draw opacity=1 ][fill={rgb, 255:red, 155; green, 155; blue, 155 }  ,fill opacity=1 ] (395.3,1838.94) .. controls (395.3,1837.8) and (396.19,1836.87) .. (397.28,1836.87) .. controls (398.37,1836.87) and (399.25,1837.8) .. (399.25,1838.94) .. controls (399.25,1840.08) and (398.37,1841) .. (397.28,1841) .. controls (396.19,1841) and (395.3,1840.08) .. (395.3,1838.94) -- cycle ;
\draw  [color={rgb, 255:red, 0; green, 0; blue, 0 }  ,draw opacity=1 ][fill={rgb, 255:red, 0; green, 0; blue, 0 }  ,fill opacity=1 ] (426.86,1838.94) .. controls (426.86,1837.8) and (427.74,1836.87) .. (428.83,1836.87) .. controls (429.92,1836.87) and (430.81,1837.8) .. (430.81,1838.94) .. controls (430.81,1840.08) and (429.92,1841) .. (428.83,1841) .. controls (427.74,1841) and (426.86,1840.08) .. (426.86,1838.94) -- cycle ;
\draw  [color={rgb, 255:red, 74; green, 144; blue, 226 }  ,draw opacity=1 ][fill={rgb, 255:red, 74; green, 144; blue, 226 }  ,fill opacity=1 ] (458.42,1838.94) .. controls (458.42,1837.8) and (459.3,1836.87) .. (460.39,1836.87) .. controls (461.48,1836.87) and (462.36,1837.8) .. (462.36,1838.94) .. controls (462.36,1840.08) and (461.48,1841) .. (460.39,1841) .. controls (459.3,1841) and (458.42,1840.08) .. (458.42,1838.94) -- cycle ;
\draw  [color={rgb, 255:red, 155; green, 155; blue, 155 }  ,draw opacity=1 ][fill={rgb, 255:red, 155; green, 155; blue, 155 }  ,fill opacity=1 ] (355.86,1888.47) .. controls (355.86,1887.33) and (356.74,1886.41) .. (357.83,1886.41) .. controls (358.92,1886.41) and (359.8,1887.33) .. (359.8,1888.47) .. controls (359.8,1889.61) and (358.92,1890.54) .. (357.83,1890.54) .. controls (356.74,1890.54) and (355.86,1889.61) .. (355.86,1888.47) -- cycle ;
\draw  [color={rgb, 255:red, 155; green, 155; blue, 155 }  ,draw opacity=1 ][fill={rgb, 255:red, 155; green, 155; blue, 155 }  ,fill opacity=1 ] (370.06,1888.47) .. controls (370.06,1887.33) and (370.94,1886.41) .. (372.03,1886.41) .. controls (373.12,1886.41) and (374,1887.33) .. (374,1888.47) .. controls (374,1889.61) and (373.12,1890.54) .. (372.03,1890.54) .. controls (370.94,1890.54) and (370.06,1889.61) .. (370.06,1888.47) -- cycle ;
\draw  [color={rgb, 255:red, 155; green, 155; blue, 155 }  ,draw opacity=1 ][fill={rgb, 255:red, 155; green, 155; blue, 155 }  ,fill opacity=1 ] (387.41,1888.47) .. controls (387.41,1887.33) and (388.3,1886.41) .. (389.39,1886.41) .. controls (390.48,1886.41) and (391.36,1887.33) .. (391.36,1888.47) .. controls (391.36,1889.61) and (390.48,1890.54) .. (389.39,1890.54) .. controls (388.3,1890.54) and (387.41,1889.61) .. (387.41,1888.47) -- cycle ;
\draw  [color={rgb, 255:red, 155; green, 155; blue, 155 }  ,draw opacity=1 ][fill={rgb, 255:red, 155; green, 155; blue, 155 }  ,fill opacity=1 ] (401.62,1888.47) .. controls (401.62,1887.33) and (402.5,1886.41) .. (403.59,1886.41) .. controls (404.68,1886.41) and (405.56,1887.33) .. (405.56,1888.47) .. controls (405.56,1889.61) and (404.68,1890.54) .. (403.59,1890.54) .. controls (402.5,1890.54) and (401.62,1889.61) .. (401.62,1888.47) -- cycle ;
\draw  [color={rgb, 255:red, 0; green, 0; blue, 0 }  ,draw opacity=1 ][fill={rgb, 255:red, 0; green, 0; blue, 0 }  ,fill opacity=1 ] (418.97,1888.47) .. controls (418.97,1887.33) and (419.86,1886.41) .. (420.94,1886.41) .. controls (422.03,1886.41) and (422.92,1887.33) .. (422.92,1888.47) .. controls (422.92,1889.61) and (422.03,1890.54) .. (420.94,1890.54) .. controls (419.86,1890.54) and (418.97,1889.61) .. (418.97,1888.47) -- cycle ;
\draw  [color={rgb, 255:red, 74; green, 144; blue, 226 }  ,draw opacity=1 ][fill={rgb, 255:red, 74; green, 144; blue, 226 }  ,fill opacity=1 ] (433.17,1888.47) .. controls (433.17,1887.33) and (434.06,1886.41) .. (435.15,1886.41) .. controls (436.23,1886.41) and (437.12,1887.33) .. (437.12,1888.47) .. controls (437.12,1889.61) and (436.23,1890.54) .. (435.15,1890.54) .. controls (434.06,1890.54) and (433.17,1889.61) .. (433.17,1888.47) -- cycle ;
\draw  [color={rgb, 255:red, 74; green, 144; blue, 226 }  ,draw opacity=1 ][fill={rgb, 255:red, 74; green, 144; blue, 226 }  ,fill opacity=1 ] (450.53,1888.47) .. controls (450.53,1887.33) and (451.41,1886.41) .. (452.5,1886.41) .. controls (453.59,1886.41) and (454.47,1887.33) .. (454.47,1888.47) .. controls (454.47,1889.61) and (453.59,1890.54) .. (452.5,1890.54) .. controls (451.41,1890.54) and (450.53,1889.61) .. (450.53,1888.47) -- cycle ;
\draw  [color={rgb, 255:red, 0; green, 0; blue, 0 }  ,draw opacity=1 ][fill={rgb, 255:red, 0; green, 0; blue, 0 }  ,fill opacity=1 ] (464.73,1888.47) .. controls (464.73,1887.33) and (465.61,1886.41) .. (466.7,1886.41) .. controls (467.79,1886.41) and (468.68,1887.33) .. (468.68,1888.47) .. controls (468.68,1889.61) and (467.79,1890.54) .. (466.7,1890.54) .. controls (465.61,1890.54) and (464.73,1889.61) .. (464.73,1888.47) -- cycle ;
\draw [color={rgb, 255:red, 155; green, 155; blue, 155 }  ,draw opacity=1 ][fill={rgb, 255:red, 155; green, 155; blue, 155 }  ,fill opacity=1 ]   (381.5,1789.4) -- (365.72,1838.94) ;
\draw [color={rgb, 255:red, 155; green, 155; blue, 155 }  ,draw opacity=1 ][fill={rgb, 255:red, 155; green, 155; blue, 155 }  ,fill opacity=1 ]   (365.72,1838.94) -- (357.83,1888.47) ;
\draw [color={rgb, 255:red, 155; green, 155; blue, 155 }  ,draw opacity=1 ][fill={rgb, 255:red, 155; green, 155; blue, 155 }  ,fill opacity=1 ]   (365.72,1838.94) -- (372.03,1888.47) ;
\draw [color={rgb, 255:red, 155; green, 155; blue, 155 }  ,draw opacity=1 ][fill={rgb, 255:red, 155; green, 155; blue, 155 }  ,fill opacity=1 ]   (397.28,1838.94) -- (403.59,1888.47) ;
\draw [color={rgb, 255:red, 155; green, 155; blue, 155 }  ,draw opacity=1 ][fill={rgb, 255:red, 155; green, 155; blue, 155 }  ,fill opacity=1 ]   (397.28,1838.94) -- (389.39,1888.47) ;
\draw  [color={rgb, 255:red, 155; green, 155; blue, 155 }  ,draw opacity=1 ][fill={rgb, 255:red, 155; green, 155; blue, 155 }  ,fill opacity=1 ] (489.98,1838.94) .. controls (489.98,1837.8) and (490.86,1836.87) .. (491.95,1836.87) .. controls (493.04,1836.87) and (493.92,1837.8) .. (493.92,1838.94) .. controls (493.92,1840.08) and (493.04,1841) .. (491.95,1841) .. controls (490.86,1841) and (489.98,1840.08) .. (489.98,1838.94) -- cycle ;
\draw  [color={rgb, 255:red, 155; green, 155; blue, 155 }  ,draw opacity=1 ][fill={rgb, 255:red, 155; green, 155; blue, 155 }  ,fill opacity=1 ] (521.53,1838.94) .. controls (521.53,1837.8) and (522.42,1836.87) .. (523.51,1836.87) .. controls (524.6,1836.87) and (525.48,1837.8) .. (525.48,1838.94) .. controls (525.48,1840.08) and (524.6,1841) .. (523.51,1841) .. controls (522.42,1841) and (521.53,1840.08) .. (521.53,1838.94) -- cycle ;
\draw  [color={rgb, 255:red, 155; green, 155; blue, 155 }  ,draw opacity=1 ][fill={rgb, 255:red, 155; green, 155; blue, 155 }  ,fill opacity=1 ] (553.09,1838.94) .. controls (553.09,1837.8) and (553.97,1836.87) .. (555.06,1836.87) .. controls (556.15,1836.87) and (557.04,1837.8) .. (557.04,1838.94) .. controls (557.04,1840.08) and (556.15,1841) .. (555.06,1841) .. controls (553.97,1841) and (553.09,1840.08) .. (553.09,1838.94) -- cycle ;
\draw  [color={rgb, 255:red, 155; green, 155; blue, 155 }  ,draw opacity=1 ][fill={rgb, 255:red, 155; green, 155; blue, 155 }  ,fill opacity=1 ] (584.65,1838.94) .. controls (584.65,1837.8) and (585.53,1836.87) .. (586.62,1836.87) .. controls (587.71,1836.87) and (588.59,1837.8) .. (588.59,1838.94) .. controls (588.59,1840.08) and (587.71,1841) .. (586.62,1841) .. controls (585.53,1841) and (584.65,1840.08) .. (584.65,1838.94) -- cycle ;
\draw [color={rgb, 255:red, 155; green, 155; blue, 155 }  ,draw opacity=1 ][fill={rgb, 255:red, 155; green, 155; blue, 155 }  ,fill opacity=1 ]   (507.76,1789.72) -- (491.95,1838.94) ;
\draw [color={rgb, 255:red, 155; green, 155; blue, 155 }  ,draw opacity=1 ][fill={rgb, 255:red, 155; green, 155; blue, 155 }  ,fill opacity=1 ]   (570.88,1789.72) -- (555.06,1838.94) ;
\draw [color={rgb, 255:red, 155; green, 155; blue, 155 }  ,draw opacity=1 ][fill={rgb, 255:red, 155; green, 155; blue, 155 }  ,fill opacity=1 ]   (381.5,1789.4) -- (397.28,1838.94) ;
\draw [color={rgb, 255:red, 155; green, 155; blue, 155 }  ,draw opacity=1 ][fill={rgb, 255:red, 155; green, 155; blue, 155 }  ,fill opacity=1 ]   (507.76,1789.72) -- (523.51,1838.94) ;
\draw [color={rgb, 255:red, 155; green, 155; blue, 155 }  ,draw opacity=1 ][fill={rgb, 255:red, 155; green, 155; blue, 155 }  ,fill opacity=1 ]   (570.88,1789.72) -- (586.62,1838.94) ;
\draw  [color={rgb, 255:red, 155; green, 155; blue, 155 }  ,draw opacity=1 ][fill={rgb, 255:red, 155; green, 155; blue, 155 }  ,fill opacity=1 ] (482.09,1888.47) .. controls (482.09,1887.33) and (482.97,1886.41) .. (484.06,1886.41) .. controls (485.15,1886.41) and (486.03,1887.33) .. (486.03,1888.47) .. controls (486.03,1889.61) and (485.15,1890.54) .. (484.06,1890.54) .. controls (482.97,1890.54) and (482.09,1889.61) .. (482.09,1888.47) -- cycle ;
\draw  [color={rgb, 255:red, 155; green, 155; blue, 155 }  ,draw opacity=1 ][fill={rgb, 255:red, 155; green, 155; blue, 155 }  ,fill opacity=1 ] (496.29,1888.47) .. controls (496.29,1887.33) and (497.17,1886.41) .. (498.26,1886.41) .. controls (499.35,1886.41) and (500.23,1887.33) .. (500.23,1888.47) .. controls (500.23,1889.61) and (499.35,1890.54) .. (498.26,1890.54) .. controls (497.17,1890.54) and (496.29,1889.61) .. (496.29,1888.47) -- cycle ;
\draw  [color={rgb, 255:red, 155; green, 155; blue, 155 }  ,draw opacity=1 ][fill={rgb, 255:red, 155; green, 155; blue, 155 }  ,fill opacity=1 ] (513.64,1888.47) .. controls (513.64,1887.33) and (514.53,1886.41) .. (515.62,1886.41) .. controls (516.71,1886.41) and (517.59,1887.33) .. (517.59,1888.47) .. controls (517.59,1889.61) and (516.71,1890.54) .. (515.62,1890.54) .. controls (514.53,1890.54) and (513.64,1889.61) .. (513.64,1888.47) -- cycle ;
\draw  [color={rgb, 255:red, 155; green, 155; blue, 155 }  ,draw opacity=1 ][fill={rgb, 255:red, 155; green, 155; blue, 155 }  ,fill opacity=1 ] (527.85,1888.47) .. controls (527.85,1887.33) and (528.73,1886.41) .. (529.82,1886.41) .. controls (530.91,1886.41) and (531.79,1887.33) .. (531.79,1888.47) .. controls (531.79,1889.61) and (530.91,1890.54) .. (529.82,1890.54) .. controls (528.73,1890.54) and (527.85,1889.61) .. (527.85,1888.47) -- cycle ;
\draw  [color={rgb, 255:red, 155; green, 155; blue, 155 }  ,draw opacity=1 ][fill={rgb, 255:red, 155; green, 155; blue, 155 }  ,fill opacity=1 ] (545.2,1888.47) .. controls (545.2,1887.33) and (546.09,1886.41) .. (547.17,1886.41) .. controls (548.26,1886.41) and (549.15,1887.33) .. (549.15,1888.47) .. controls (549.15,1889.61) and (548.26,1890.54) .. (547.17,1890.54) .. controls (546.09,1890.54) and (545.2,1889.61) .. (545.2,1888.47) -- cycle ;
\draw  [color={rgb, 255:red, 155; green, 155; blue, 155 }  ,draw opacity=1 ][fill={rgb, 255:red, 155; green, 155; blue, 155 }  ,fill opacity=1 ] (559.4,1888.47) .. controls (559.4,1887.33) and (560.29,1886.41) .. (561.38,1886.41) .. controls (562.47,1886.41) and (563.35,1887.33) .. (563.35,1888.47) .. controls (563.35,1889.61) and (562.47,1890.54) .. (561.38,1890.54) .. controls (560.29,1890.54) and (559.4,1889.61) .. (559.4,1888.47) -- cycle ;
\draw  [color={rgb, 255:red, 155; green, 155; blue, 155 }  ,draw opacity=1 ][fill={rgb, 255:red, 155; green, 155; blue, 155 }  ,fill opacity=1 ] (576.76,1888.47) .. controls (576.76,1887.33) and (577.64,1886.41) .. (578.73,1886.41) .. controls (579.82,1886.41) and (580.7,1887.33) .. (580.7,1888.47) .. controls (580.7,1889.61) and (579.82,1890.54) .. (578.73,1890.54) .. controls (577.64,1890.54) and (576.76,1889.61) .. (576.76,1888.47) -- cycle ;
\draw  [color={rgb, 255:red, 155; green, 155; blue, 155 }  ,draw opacity=1 ][fill={rgb, 255:red, 155; green, 155; blue, 155 }  ,fill opacity=1 ] (590.96,1888.47) .. controls (590.96,1887.33) and (591.84,1886.41) .. (592.93,1886.41) .. controls (594.02,1886.41) and (594.91,1887.33) .. (594.91,1888.47) .. controls (594.91,1889.61) and (594.02,1890.54) .. (592.93,1890.54) .. controls (591.84,1890.54) and (590.96,1889.61) .. (590.96,1888.47) -- cycle ;
\draw [color={rgb, 255:red, 155; green, 155; blue, 155 }  ,draw opacity=1 ][fill={rgb, 255:red, 155; green, 155; blue, 155 }  ,fill opacity=1 ]   (491.95,1838.94) -- (484.06,1888.47) ;
\draw [color={rgb, 255:red, 155; green, 155; blue, 155 }  ,draw opacity=1 ][fill={rgb, 255:red, 155; green, 155; blue, 155 }  ,fill opacity=1 ]   (491.95,1838.94) -- (498.26,1888.47) ;
\draw [color={rgb, 255:red, 155; green, 155; blue, 155 }  ,draw opacity=1 ][fill={rgb, 255:red, 155; green, 155; blue, 155 }  ,fill opacity=1 ]   (523.51,1838.94) -- (529.82,1888.47) ;
\draw [color={rgb, 255:red, 155; green, 155; blue, 155 }  ,draw opacity=1 ][fill={rgb, 255:red, 155; green, 155; blue, 155 }  ,fill opacity=1 ]   (555.06,1838.94) -- (561.38,1890.54) ;
\draw [color={rgb, 255:red, 155; green, 155; blue, 155 }  ,draw opacity=1 ][fill={rgb, 255:red, 155; green, 155; blue, 155 }  ,fill opacity=1 ]   (586.62,1838.94) -- (592.93,1888.47) ;
\draw [color={rgb, 255:red, 155; green, 155; blue, 155 }  ,draw opacity=1 ][fill={rgb, 255:red, 155; green, 155; blue, 155 }  ,fill opacity=1 ]   (523.51,1838.94) -- (515.62,1886.41) ;
\draw [color={rgb, 255:red, 155; green, 155; blue, 155 }  ,draw opacity=1 ][fill={rgb, 255:red, 155; green, 155; blue, 155 }  ,fill opacity=1 ]   (555.06,1838.94) -- (547.17,1888.47) ;
\draw [color={rgb, 255:red, 155; green, 155; blue, 155 }  ,draw opacity=1 ][fill={rgb, 255:red, 155; green, 155; blue, 155 }  ,fill opacity=1 ]   (586.62,1838.94) -- (578.73,1886.41) ;
\draw [color={rgb, 255:red, 155; green, 155; blue, 155 }  ,draw opacity=1 ][fill={rgb, 255:red, 155; green, 155; blue, 155 }  ,fill opacity=1 ]   (539.32,1740.19) -- (570.88,1789.72) ;
\draw [color={rgb, 255:red, 155; green, 155; blue, 155 }  ,draw opacity=1 ][fill={rgb, 255:red, 155; green, 155; blue, 155 }  ,fill opacity=1 ]   (507.76,1789.72) -- (539.32,1740.19) ;
\draw    (412.54,1883.81) -- (412.54,1891.97) ;
\draw    (474.87,1884.72) -- (474.87,1892.87) ;
\draw    (305.64,1689.9) -- (305.64,1896.08) ;
\draw [shift={(305.64,1898.08)}, rotate = 270] [color={rgb, 255:red, 0; green, 0; blue, 0 }  ][line width=0.75]    (4.37,-1.32) .. controls (2.78,-0.56) and (1.32,-0.12) .. (0,0) .. controls (1.32,0.12) and (2.78,0.56) .. (4.37,1.32)   ;
\draw    (305.64,1789.82) -- (310.83,1789.82) ;
\draw    (310.83,1839.95) -- (305.77,1839.95) ;
\draw    (305.64,1693.67) -- (310.83,1693.67) ;
\draw    (310.7,1888.72) -- (305.64,1888.72) ;
\draw    (305.64,1740.82) -- (310.83,1740.82) ;
\draw    (444.61,1789.4) -- (428.83,1838.94) ;
\draw  [color={rgb, 255:red, 0; green, 0; blue, 0 }  ,draw opacity=1 ][fill={rgb, 255:red, 0; green, 0; blue, 0 }  ,fill opacity=1 ] (442.64,1789.4) .. controls (442.64,1788.26) and (443.52,1787.34) .. (444.61,1787.34) .. controls (445.7,1787.34) and (446.59,1788.26) .. (446.59,1789.4) .. controls (446.59,1790.54) and (445.7,1791.47) .. (444.61,1791.47) .. controls (443.52,1791.47) and (442.64,1790.54) .. (442.64,1789.4) -- cycle ;
\draw    (444.61,1789.4) -- (460.39,1838.94) ;
\draw  [color={rgb, 255:red, 0; green, 0; blue, 0 }  ,draw opacity=1 ][fill={rgb, 255:red, 0; green, 0; blue, 0 }  ,fill opacity=1 ] (458.42,1838.94) .. controls (458.42,1837.8) and (459.3,1836.87) .. (460.39,1836.87) .. controls (461.48,1836.87) and (462.36,1837.8) .. (462.36,1838.94) .. controls (462.36,1840.08) and (461.48,1841) .. (460.39,1841) .. controls (459.3,1841) and (458.42,1840.08) .. (458.42,1838.94) -- cycle ;
\draw  [color={rgb, 255:red, 0; green, 0; blue, 0 }  ,draw opacity=1 ][fill={rgb, 255:red, 0; green, 0; blue, 0 }  ,fill opacity=1 ] (433.17,1888.47) .. controls (433.17,1887.33) and (434.06,1886.41) .. (435.15,1886.41) .. controls (436.23,1886.41) and (437.12,1887.33) .. (437.12,1888.47) .. controls (437.12,1889.61) and (436.23,1890.54) .. (435.15,1890.54) .. controls (434.06,1890.54) and (433.17,1889.61) .. (433.17,1888.47) -- cycle ;
\draw  [color={rgb, 255:red, 0; green, 0; blue, 0 }  ,draw opacity=1 ][fill={rgb, 255:red, 0; green, 0; blue, 0 }  ,fill opacity=1 ] (450.53,1888.47) .. controls (450.53,1887.33) and (451.41,1886.41) .. (452.5,1886.41) .. controls (453.59,1886.41) and (454.47,1887.33) .. (454.47,1888.47) .. controls (454.47,1889.61) and (453.59,1890.54) .. (452.5,1890.54) .. controls (451.41,1890.54) and (450.53,1889.61) .. (450.53,1888.47) -- cycle ;
\draw [color={rgb, 255:red, 0; green, 0; blue, 0 }  ,draw opacity=1 ]   (428.83,1838.94) -- (435.15,1888.47) ;
\draw    (413.09,1740.19) -- (444.61,1789.4) ;
\draw    (476.2,1690.65) -- (413.09,1740.19) ;
\draw [color={rgb, 255:red, 0; green, 0; blue, 0 }  ,draw opacity=1 ]   (460.39,1838.94) -- (452.5,1888.47) ;

\draw (391.1,1893.18) node [anchor=north west][inner sep=0.75pt]  [font=\tiny]  {$-\log^{\theta } T$};
\draw (460.85,1893.18) node [anchor=north west][inner sep=0.75pt]  [font=\tiny]  {$\log^{\theta } T$};
\draw (598.2,1891.34) node [anchor=north west][inner sep=0.75pt]  [font=\scriptsize]  {$h$};
\draw (427,1734.37) node [anchor=north west][inner sep=0.75pt]  [font=\scriptsize,color={rgb, 255:red, 208; green, 2; blue, 27 }  ,opacity=1 ]  {$S_{t_{\theta }}( h_{1}) \approx S_{t_{\theta }}( h_{2})$};
\draw (428.3,1889.87) node [anchor=north west][inner sep=0.75pt]  [font=\scriptsize]  {$h_{1}$};
\draw (392.39,1812.17) node [anchor=north west][inner sep=0.75pt]  [font=\scriptsize,color={rgb, 255:red, 74; green, 144; blue, 226 }  ,opacity=1 ]  {$\mathscr{S}_{2}( h_{1})$};
\draw (311.93,1784.8) node [anchor=north west][inner sep=0.75pt]  [font=\scriptsize]  {$t_{\theta } =2$};
\draw (312.06,1834.39) node [anchor=north west][inner sep=0.75pt]  [font=\scriptsize]  {$3$};
\draw (312.07,1689.81) node [anchor=north west][inner sep=0.75pt]  [font=\scriptsize]  {$0$};
\draw (289.4,1684.83) node [anchor=north west][inner sep=0.75pt]  [font=\footnotesize]  {$k$};
\draw (312.06,1881.95) node [anchor=north west][inner sep=0.75pt]  [font=\scriptsize]  {$t=4$};
\draw (445.24,1889.87) node [anchor=north west][inner sep=0.75pt]  [font=\scriptsize]  {$h_{2}$};
\draw (455.49,1795.74) node [anchor=north west][inner sep=0.75pt]  [font=\scriptsize,color={rgb, 255:red, 74; green, 144; blue, 226 }  ,opacity=1 ]  {$\mathscr{S}_{1}( h_{2})$};
\draw (311.93,1734.8) node [anchor=north west][inner sep=0.75pt]  [font=\scriptsize]  {$1$};
\draw (471.4,1675.83) node [anchor=north west][inner sep=0.75pt]  [font=\footnotesize]  {$r$};
\draw (446.4,1775.83) node [anchor=north west][inner sep=0.75pt]  [font=\footnotesize]  {$r_{\theta }$};

\end{tikzpicture}
    \caption{Illustration of the branching random walk model for $\log|\zeta|$ on a short interval. In a window of width $2\log^\theta T$, {all leaves share the edges between their least common ancestor $r_\theta$ and the root $r$. These edges roughly correspond to the Dirichlet sum $S_{t_\theta}(0)$ (cf.~Proposition \ref{smallprimes}). The shifted sums $\mathscr{S}_{k}(h):=S_{t_\theta+k}(h)-S_{t_\theta}(h)$, for $|h|\leq \log^\theta T$ and $k\leq t-t_\theta$, are analogous to a branching random walk on the subtree of height $t-t_\theta$ rooted at $r_\theta$.}}
    \label{fig:brw}
\end{figure}

\noindent \textbf{Upper bound for the maximum. }On the interval $|h|\leq \log^\theta T$, all of the $(S_k(h))_{k\leq t}$ approximately share their increments up to $k\approx t_\theta=|\theta|\log\log T$ (as made precise by Proposition \ref{smallprimes}). The maximum of $\log|\zeta(\tfrac{1}{2}+i(\tau+h))|=\log|\zeta_\tau(h)|$ on this interval should thus roughly equal the sum of two random variables: the first is $S_{t_\theta}(0)$ (which we note equals $-\log|P_\tau(\theta)|+O(1)$), and the second is the maximum of $(\mathscr{S}_k(h))_{|h|\leq \log^\theta T}$, where 
\begin{equation}
\label{eqn: Sk script}
\mathscr{S}_k:={S}_{t_\theta+k}-S_{t_\theta},
\end{equation}
which is itself modeled by the maximum of a branching random walk, this time over a tree of depth $t(1+\theta)$ (this is depicted in Figure \ref{fig:brw}, with the tree in question being the subtree rooted at $r_\theta$, the least common ancestor of the leaves corresponding to $(S_t(h),{|h|\leq \log^\theta T})$).

To prove Theorem \ref{UB}, our strategy therefore consists of subtracting $S_{t_\theta}(0)$ from $\log|\zeta_\tau(h)|$ and using the recursive scheme in \cite{FHK1} to bound the maximum of the difference (see Section \ref{upperboundsection}). This scheme allows one to apply, in this number-theoretic setting, the \textit{barrier method} developed by Bramson \cite{Bramson} to bound the maximal displacement in branching Brownian motion. Compared to the $\theta=0$ case in \cite{FHK1}, our proof requires additional care throughout, as one must work with longer Dirichlet polynomials. Estimating $\log|\zeta_\tau(h)|-S_{t_\theta}(0)$ also requires control over the ``error'' $\max_{|h|\leq (\log T)^\theta}|S_{t_\theta}(h)-S_{t_\theta}(0)|$, for which a Gaussian tail (Proposition \ref{smallprimes}) is derived in Section \ref{corrsection}.

\bigskip

\noindent \textbf{Upper bound for the partition function at criticality.} We begin with the case $\theta=0$, and express $\mathcal{Z}_2(0)$ in terms of the measure of level sets of $\log|\zeta_\tau(h)|$:
\[
    \mathcal{Z}_2(0)\asymp \int_{-\infty}^{\infty} e^{2V}\text{meas}\big\{|h|\leq 1:\log|\zeta_\tau(h)|>V\big\}\mathrm{d}V.
\]
\noindent By Theorem \ref{UB}, we may truncate this integral at $M=m(t)+\log A$, since the maximum of $\log |\zeta_\tau(h)|$ only exceeds this threshold with probability $\ll 1/A$. Following the proof of Corollary 1.4 in \cite{arguinbailey1}, the claimed bound would follow if one could show that
\begin{equation}\label{integral}
    \int_{-1}^1\int_{0}^{M} e^{2V}\mathbb{P}\big\{\log |\zeta_\tau(h)|>V\,\big\} \mathrm{d}V\,\mathrm{d}h \ll A \frac{e^t}{\sqrt{t}}.
\end{equation}
 However, the large deviations estimates for $\log |\zeta_\tau(h)|$ in \cite{arguinbailey1, arguincreighton} show that the left-hand side is of the order of $\int_0^{M}e^{2V-V^2/t}/\sqrt{t}\asymp e^t$, the expectation of $\mathcal{Z}_2(0)$ being inflated by atypically large values of $\log|\zeta_\tau(h)|$.

To recover the additional factor of $1/\sqrt{t}$, it is helpful to once again liken the process $(\log |\zeta_\tau(h)|,|h|\leq 1)$ to the approximate branching random walk $(S_t(h), |h|\leq 1)$. Doing so suggests restricting to a good event on which $S_k(h)$ remains below a {barrier} $U_{\log A}(k)$ for all $k\leq t$ and $|h|\leq 1$. Were this a genuine branching random walk, one could use the logarithmic correlation structure of $(S_t(h))_h$ to, roughly speaking, reduce the integrand in \eqref{integral} to
\[
    e^{2V}\mathbb{P}\big\{S_t(h)>V, \, S_k(h) \leq m(k)+U_{\log A}(k)\, \forall k\leq t\big\},
\]
for a $U_{\log A}$ lying \textit{below} the typical fluctuations of a random walk conditioned to end at $V$.
This would be computable via a ballot theorem (Proposition \ref{prop:ballot}), and integrating the resulting estimate over $V\in [0,M]$ would recover the missing $1/\sqrt{t}$. This approach was recently used by Chang \cite{chang} to prove an analogue of \eqref{harper} for a Gaussian model of $\log|\zeta_\tau|$, and Harper's argument, while different, also makes crucial use of barriers \cite{harper2019}.

Adapting this approach to $\log|\zeta_\tau|$ requires one to control the deviations of the increments $S_k-S_{k-1}$ for large $k\approx t$, showing that their behaviour is close to Gaussian. This amounts to computing large moments of long Dirichlet polynomials, and the error in doing so (using Lemma \ref{MVMV}) can quickly become unruly. 
The recursive scheme of \cite{FHK1} handles this by iteratively constructing both \textit{upper} and \textit{lower} barriers $U_{\log A}$ and $L_{\log A}$ which restrict the range of values that these increments can take. At every step of this iteration, one then only needs to consider Dirichlet polynomials short enough for Lemma \ref{MVMV} to be applicable. 

However, this approach relies on the gap $U_{\log A}(k)-L_{\log A}(k)$ between these barriers being sufficiently small as $k$ grows, shrinking to $O(1)$ at time $t$. This raises an obvious problem in our previous argument, which requires the same barrier $U_{\log A}$ to be taken uniformly over all $V$: any corresponding lower barrier would have to lie below $(k/t)V$, potentially resulting in a large gap for $V$ away from $U_{\log A}(t)\approx M$.

To resolve this, we make the simple yet crucial observation that a barrier up to time $t_*=t/2$ suffices to get the sought-after bound. That this should be the case is not a priori clear; it follows from the fact that the resulting integral is localized at levels $V=t-O(\sqrt{t})$, for which a barrier up to $t_*$ yields a bound of the same order as one up to $t$. This is not the case for larger $V$ (e.g., for $V\approx M$, one is off by a factor of $\sqrt{t}$), but the Gaussian weight will suffice to offset the resulting error.

Concretely, we place ourselves on the event that $S_k(h)\in [L_{\log A}(k),U_{\log A}(k)]$ for all $k\leq t_*$ for a suitable choice of barriers. This is straightforward, as $t_*$ is  small enough for Lemma \ref{MVMV} to apply directly, and one must then prove large deviation estimates for $\log|\zeta_\tau(h)|$ {on this event}. This still requires controlling the problematic increments $S_k-S_{k-1}$ for $k\in (t_*,t]$, but since no barrier is required at these times, this can be achieved by a variant of the recursive scheme developed in \cite{arguinbailey1} in which one only restricts $S_k$ at a sequence of sparse checkpoints $(t_\ell)_{\ell}$ (depicted in Figure \ref{fig:partial_barrier}). This gives rise to the ($\theta=0$ case of the) following result.

\begin{figure}
    \centering

\tikzset{every picture/.style={line width=0.75pt}} 
\resizebox{250px}{!}{
\begin{tikzpicture}[x=0.75pt,y=0.75pt,yscale=-1,xscale=1]

\draw (293.01,2450.15) node  {\includegraphics[width=265.5pt,height=156.65pt]{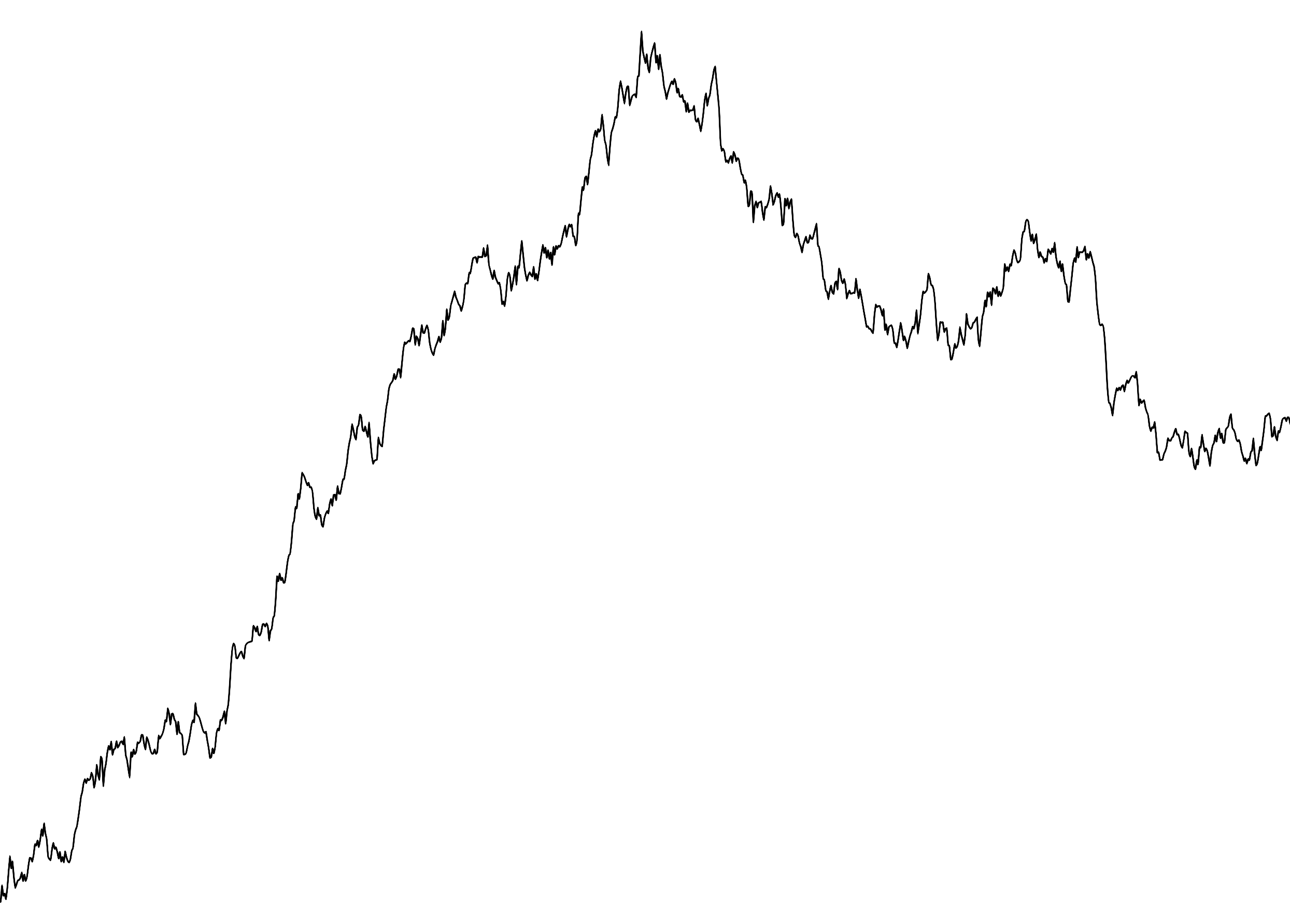}};
\draw    (103.29,2551.9) -- (507.72,2551.9) ;
\draw [shift={(509.72,2551.9)}, rotate = 180] [color={rgb, 255:red, 0; green, 0; blue, 0 }  ][line width=0.75]    (4.37,-1.32) .. controls (2.78,-0.56) and (1.32,-0.12) .. (0,0) .. controls (1.32,0.12) and (2.78,0.56) .. (4.37,1.32)   ;
\draw [line width=0.75]    (116.01,2562.61) -- (116.01,2300.09) ;
\draw [shift={(116.01,2298.09)}, rotate = 90] [color={rgb, 255:red, 0; green, 0; blue, 0 }  ][line width=0.75]    (4.37,-1.32) .. controls (2.78,-0.56) and (1.32,-0.12) .. (0,0) .. controls (1.32,0.12) and (2.78,0.56) .. (4.37,1.32)   ;
\draw [color={rgb, 255:red, 0; green, 0; blue, 0 }  ,draw opacity=1 ]   (116.51,2479.41) .. controls (128.1,2401.25) and (257.39,2317.9) .. (325.74,2308.58) ;
\draw [color={rgb, 255:red, 0; green, 0; blue, 0 }  ,draw opacity=1 ]   (108.44,2479.41) -- (122.07,2479.41) ;
\draw [color={rgb, 255:red, 0; green, 0; blue, 0 }  ,draw opacity=1 ] [dash pattern={on 4.5pt off 4.5pt}]  (116.51,2479.41) -- (469.49,2327.22) ;
\draw  [dash pattern={on 0.84pt off 2.51pt}]  (325.74,2553.13) -- (325.74,2294.2) ;
\draw  [dash pattern={on 0.84pt off 2.51pt}]  (470.01,2294.13) -- (470.01,2553.73) ;
\draw [color={rgb, 255:red, 0; green, 0; blue, 0 }  ,draw opacity=1 ]   (464.78,2327.22) -- (475.44,2327.22) ;
\draw [color={rgb, 255:red, 155; green, 155; blue, 155 }  ,draw opacity=1 ]   (423.85,2417.82) -- (415.23,2417.82) ;
\draw  [dash pattern={on 0.84pt off 2.51pt}]  (440.69,2560.32) -- (453.86,2560.32) ;
\draw    (476.44,2442.14) -- (464.47,2442.14) ;
\draw [color={rgb, 255:red, 155; green, 155; blue, 155 }  ,draw opacity=1 ]   (423.85,2460.47) -- (415.23,2460.47) ;
\draw [color={rgb, 255:red, 155; green, 155; blue, 155 }  ,draw opacity=1 ]   (393.05,2465.1) -- (384.43,2465.1) ;
\draw [color={rgb, 255:red, 155; green, 155; blue, 155 }  ,draw opacity=1 ]   (393.26,2404.18) -- (384.64,2404.18) ;
\draw [color={rgb, 255:red, 155; green, 155; blue, 155 }  ,draw opacity=1 ]   (473.76,2438.02) -- (465.14,2438.02) ;
\draw [color={rgb, 255:red, 155; green, 155; blue, 155 }  ,draw opacity=1 ]   (473.76,2455.85) -- (465.14,2455.85) ;
\draw [color={rgb, 255:red, 155; green, 155; blue, 155 }  ,draw opacity=1 ]   (389.64,2404.18) -- (389.64,2465.1) ;
\draw [color={rgb, 255:red, 155; green, 155; blue, 155 }  ,draw opacity=1 ]   (419.11,2417.82) -- (419.11,2460.47) ;
\draw [color={rgb, 255:red, 155; green, 155; blue, 155 }  ,draw opacity=1 ]   (470.01,2438.1) -- (470.01,2455.85) ;
\draw    (386.91,2549.75) -- (386.91,2554.55) ;
\draw    (325.74,2549.97) -- (325.74,2554.77) ;
\draw    (419.11,2549.75) -- (419.11,2554.55) ;
\draw    (470.01,2549.87) -- (470.01,2554.58) ;
\draw [color={rgb, 255:red, 155; green, 155; blue, 155 }  ,draw opacity=1 ]   (459.48,2456.65) -- (451.14,2456.65) ;
\draw [color={rgb, 255:red, 155; green, 155; blue, 155 }  ,draw opacity=1 ]   (455.48,2435.76) -- (455.48,2456.65) ;
\draw [color={rgb, 255:red, 155; green, 155; blue, 155 }  ,draw opacity=1 ]   (459.48,2436.08) -- (451.14,2436.08) ;
\draw [color={rgb, 255:red, 155; green, 155; blue, 155 }  ,draw opacity=1 ]   (434.48,2430.29) -- (434.48,2459.02) ;
\draw [color={rgb, 255:red, 155; green, 155; blue, 155 }  ,draw opacity=1 ]   (438.48,2430.29) -- (430.14,2430.29) ;
\draw [color={rgb, 255:red, 155; green, 155; blue, 155 }  ,draw opacity=1 ]   (438.48,2458.77) -- (430.14,2458.77) ;
\draw [color={rgb, 255:red, 155; green, 155; blue, 155 }  ,draw opacity=1 ] [dash pattern={on 0.84pt off 2.51pt}]  (439.69,2439.57) -- (452.86,2439.57) ;

\draw (76.37,2471.93) node [anchor=north west][inner sep=0.75pt]  [font=\footnotesize,color={rgb, 255:red, 0; green, 0; blue, 0 }  ,opacity=1 ]  {$\log A$};
\draw (477.04,2319.14) node [anchor=north west][inner sep=0.75pt]  [font=\footnotesize,color={rgb, 255:red, 0; green, 0; blue, 0 }  ,opacity=1 ]  {$m( t) +\log A$};
\draw (320.33,2557.74) node [anchor=north west][inner sep=0.75pt]  [font=\scriptsize]  {$t_{0}$};
\draw (381.37,2557.74) node [anchor=north west][inner sep=0.75pt]  [font=\scriptsize]  {$t_{1}$};
\draw (413.85,2557.74) node [anchor=north west][inner sep=0.75pt]  [font=\scriptsize]  {$t_{2}$};
\draw (464.27,2557.74) node [anchor=north west][inner sep=0.75pt]  [font=\scriptsize]  {$t_{\mathcal{L}}$};
\draw (479.16,2435.85) node [anchor=north west][inner sep=0.75pt]  [font=\scriptsize]  {$V$};
\draw (184.79,2325.32) node [anchor=north west][inner sep=0.75pt]  [font=\footnotesize,color={rgb, 255:red, 0; green, 0; blue, 0 }  ,opacity=1 ]  {$U_{\log A}( k)$};
\draw (94.14,2300.11) node [anchor=north west][inner sep=0.75pt]  [font=\footnotesize]  {$\mathscr{S}_{k}$};
\draw (498.14,2555.76) node [anchor=north west][inner sep=0.75pt]  [font=\footnotesize]  {$k$};

\end{tikzpicture}
}
    \caption{The partial barrier $U_{\log A}(k)$ controls $\mathscr{S}_k=S_{k+t_\theta}-S_{t_\theta}$ uniformly over the value $V$ of $\mathscr{S}_{t-t_\theta}$. }
    \label{fig:partial_barrier}
\end{figure}


{
\begin{theorem}[Large deviations with constraints on a prime interval] \label{conditionLD} Fix $\theta\in (-1,0]$ and $t_*=t_*(\theta)=(t-t_\theta)/2$. Let $C>10$ be a large constant, and for any $A\geq 1$, define the barriers
\begin{align*}
    &{U_A(k)=A+k+C\log k},\quad {L_A(k)=A-Ck},\quad \text{ for } k>A/4,
\end{align*}
{and $U_A(k)=-L_A(k)=\infty$ otherwise.}
Then uniformly in large enough $t$, $|h|\leq 2$, $1\leq A\leq \sqrt{t}$, and $t_*-t_*^{2/3}\leq V/2<U_A(t_*)$,
    \begin{align}\label{condLD}
    \begin{split}
            \mathbb{P}\Big\{\log |\zeta_\tau(h)|-S_{t_\theta}(h) > &\,V, {\mathscr{S}}_{k}(h){\,\in[L_A(k),U_A(k)]}, \,\forall k\leq {t_*}\Big\} \\&\ll \frac{A\left(U_A(t_*)-{V/2}+\sqrt{t_*}\right)}{{t_*}}\frac{e^{-V^2/(t-t_\theta)}}{\sqrt{t-t_\theta}}.
              \end{split}
        \end{align}
\end{theorem}
}
\noindent This theorem can be seen as an instance of a more general result which may be of independent interest. That is, for a reasonable enough event $E$ which only depends on the increments $(S_k-S_{k-1})(h)$ for $k\leq {t_*}$, the same proof gives that
\[
    \mathbb{P}\big\{\{\log |\zeta_\tau(h)|>V\}\cap E\big\} \ll \mathbb{P}\big\{\{\mathcal{W}_t > V\}\cap {\tilde{E}}\big\},
\]
where $(\mathcal{W}_k)_{k\leq t}$ is a Gaussian random walk with $t$ i.i.d. increments of variance $1/2$, and $\tilde{E}$ is the event obtained from $E$ by substituting each $\mathcal{W}_k-\mathcal{W}_{k-1}$ for $S_{k}-S_{k-1}$. 

\bigskip

Lastly, to bound $\mathcal{Z}_2{(\theta)}$ when $\theta <0$, we begin by factoring out the contribution coming from $S_{t_\theta}(h)$ in the integral, as it is (morally speaking) constant for all $h$. This is achieved using the following proposition.
\begin{proposition}[Correlation of primes smaller than $\exp(e^{|\theta|t})$]\label{smallprimes}
    Let $\theta \in (-1,0]$ and ${2<x<e^{t(1+\theta)/2}}$. Then there exist absolute constants $c,C>0$ such that
    \[
        \P{\max_{|h|\leq 2e^{\theta t}}|S_{t_\theta}(h)-S_{t_\theta}(0)|>x}
        \leq Ce^{-x^2/c}.
    \]
\end{proposition}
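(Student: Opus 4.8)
The idea is that on the microscopically short window $|h|\le e^{\theta t}=e^{-t_\theta}$ the Dirichlet sum $S_{t_\theta}$ only involves frequencies $\log p\le e^{t_\theta}$, so $|h\log p|\le 1$ throughout and $h\mapsto S_{t_\theta}(h)$ is essentially analytic: by a mean value estimate (as in Lemma 3.4 of \cite{AOR}) its covariance has a convergent Taylor expansion in $h$ with error $O(\exp(e^{t_\theta})/T)=O(T^{-1+o(1)})$. I would therefore write, for $|h|\le e^{-t_\theta}$,
\[
S_{t_\theta}(h)-S_{t_\theta}(0)=\sum_{m\ge 1}\frac{h^{m}}{m!}\,S_{t_\theta}^{(m)}(0),\qquad S_{t_\theta}^{(m)}(0):=\Re\!\!\sum_{\log p\le e^{t_\theta}}\!\!\frac{(-i\log p)^m}{p^{1/2+i\tau}}+\frac12\Re\!\!\sum_{\log p\le e^{t_\theta}}\!\!\frac{(-2i\log p)^m}{p^{1+2i\tau}},
\]
so that $\max_{|h|\le e^{-t_\theta}}|S_{t_\theta}(h)-S_{t_\theta}(0)|\le\sum_{m\ge1}\frac{e^{-mt_\theta}}{m!}|S_{t_\theta}^{(m)}(0)|$. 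This decomposition plays the role of the chaining. Each $S_{t_\theta}^{(m)}(0)$ is a centred Dirichlet polynomial whose variance, by partial summation, is $\V{S_{t_\theta}^{(m)}(0)}=\tfrac12\sum_{\log p\le e^{t_\theta}}\frac{(\log p)^{2m}}{p}+O(T^{-1+o(1)})\asymp\frac{e^{2mt_\theta}}{4m}$ (the prime-square term is lower order), hence the $m$-th summand has $L^2$-norm $\asymp\frac{1}{m!\sqrt m}$; summing in $m$ already shows the maximum is $O(1)$ in $L^2$.

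The main step is a sub-Gaussian tail for each $S_{t_\theta}^{(m)}(0)$. Since $\tau$ is uniform on $[T,2T]$ and, in its $2k$-th moment, $S_{t_\theta}^{(m)}(0)$ is supported on integers up to $\exp(2ke^{t_\theta})$, expanding the moment and using $\big|\tfrac1T\int_T^{2T}(N/M)^{i\tau}\,\mathrm dt\big|\le\min\big(1,\tfrac{2}{T|\log(N/M)|}\big)$ shows the off-diagonal contribution is $O\big(\exp(2ke^{t_\theta})/T\big)$ times the square of the $\ell^1$-norm of the coefficients, hence negligible against the diagonal $\ll k!\,\V{S_{t_\theta}^{(m)}(0)}^k$ as long as $k\le K$ with $K\asymp \log T/e^{t_\theta}=(\log T)^{1+\theta}$. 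Markov's inequality then gives $\P{|S_{t_\theta}^{(m)}(0)|>\lambda}\ll \exp\big(-c\min\big(\lambda^2/\V{S_{t_\theta}^{(m)}(0)},\,K\big)\big)$, the first alternative being the relevant one whenever $\lambda^2\le cK\,\V{S_{t_\theta}^{(m)}(0)}$. For $m$ larger than an explicit threshold $\asymp e^{t_\theta}$ one instead uses the crude deterministic bound $|S_{t_\theta}^{(m)}(0)|\le\sum_{\log p\le e^{t_\theta}}(\log p)^m/\sqrt p\ll (e^{t_\theta})^{m-1}\exp(e^{t_\theta}/2)$, whose contribution to the Taylor tail, weighted by $e^{-mt_\theta}/m!$, is $o(1)$.

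Finally I would take a union bound with the splitting $x=\sum_{m\ge1}x_m$, $x_m=\frac{x}{e-1}\cdot\frac1{m!}$, chosen so that $\lambda_m:=x_m m!\,e^{mt_\theta}$ satisfies $\lambda_m^2/\V{S_{t_\theta}^{(m)}(0)}\asymp m x^2$ uniformly in $m$. Then
\[
\P{\max_{|h|\le e^{-t_\theta}}|S_{t_\theta}(h)-S_{t_\theta}(0)|>x}\le\sum_{m\ge1}\P{\tfrac{e^{-mt_\theta}}{m!}|S_{t_\theta}^{(m)}(0)|>x_m}\ll\sum_{m\ge1}e^{-c\min(m x^2,\,K)}.
\]
The part of the sum with $mx^2\le K$ is a geometric series bounded by $\ll e^{-cx^2}$ once $x$ exceeds an absolute constant (for smaller $x$ the asserted bound is trivial since the probability is $\le 1$ while $e^{-x^2/c}$ is bounded below); the complementary part is controlled by taking $k=K$ in the moment bound and summing the resulting $\sum_m(m_0/em)^K$, giving a contribution $\ll e^{-cK}$, and one checks that for $x$ in the stated range $0<x<4^{t_\theta}$ this is dominated by $e^{-cx^2}$. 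Altogether this yields the claim with an absolute $c$.

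The step I expect to be most delicate is the tail bound for $S_{t_\theta}^{(m)}(0)$: one must keep simultaneous track of the off-diagonal error, the coefficient size (which grows like $(\log p)^m$), and the variance, so that the admissible moment order $K$ is large enough relative to the whole range of $x$ — equivalently, one has to push the moment method for a Dirichlet polynomial whose length is a (small) power of $T$. The remaining bookkeeping in $m$ — making the thresholds $x_m$ simultaneously summable and yielding a uniform-in-$m$ rate, and checking the far Taylor tail is genuinely negligible — is routine but must be carried out with explicit constants.
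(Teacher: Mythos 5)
Your proof proposal takes a genuinely different route from the paper. The paper uses a dyadic chaining argument (following \cite{arguinbeliusharper}): it telescopes $S_{t_\theta}(h)-S_{t_\theta}(0)$ along a dyadic net $h_i\to h$, union-bounds over scales $|h_{i+1}-h_i|\asymp 2^{-k-i}$, and at each scale invokes the two-point sub-Gaussian estimate of Lemma~3.5, which is itself proved by comparing $2q$-th moments of $S_{t_\theta}(h)-S_{t_\theta}(h')$ to those of a random multiplicative model via the mean-value theorem for Dirichlet polynomials. You instead expand $S_{t_\theta}$ in a Taylor series about $h=0$, reducing the uniform control on $|h|\le e^{-t_\theta}$ to a single union bound over derivative orders $m$, with each $S_{t_\theta}^{(m)}(0)$ handled by moments in a structurally identical way. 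Your variance computation $\V{S_{t_\theta}^{(m)}(0)}\asymp e^{2mt_\theta}/(4m)$ is correct, the weight $e^{-mt_\theta}/m!$ makes the resulting $L^2$ series summable, and the choice $x_m\propto 1/m!$ makes $\lambda_m^2/\V{S^{(m)}_{t_\theta}}\asymp mx^2$ uniform in $m$, so the geometric union bound yields $e^{-cx^2}$ exactly as you say. The Taylor route arguably has cleaner bookkeeping (no nested dyadic nets, a single one-dimensional index $m$), at the price of needing a separate deterministic estimate for the far Taylor tail $m\gtrsim e^{t_\theta}$, which you correctly identify and dispatch.

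One caveat, which applies equally to the paper's own argument, concerns the upper end of the stated range $x<4^{t_\theta}$. Both proofs rest on moment estimates valid only up to order $K\asymp e^{t(1+\theta)}$, forced by the length constraint $\exp(2ke^{t_\theta})\ll T$ in the mean-value theorem. When the first alternative $mx^2$ in your $\min(mx^2,K)$ already exceeds $K$ at $m=1$, the moment method caps the decay at roughly $e^{-cK}$; so the claimed bound $e^{-x^2/c}$ with an absolute $c$ is only recovered when $x^2\lesssim K=e^{t(1+\theta)}$. Since $16^{t_\theta}=e^{t_\theta\ln 16}$ can exceed $e^{t(1+\theta)}$ once $|\theta|>1/(1+\ln 16)\approx 0.265$, the ``one checks'' in your last step does not actually close for all $x$ in the stated range when $|\theta|$ is large. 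The paper's chaining proof has precisely the same limitation (its choice $2q\asymp x^2/(c(e^{t_\theta}|h-h'|)^2)$ also needs $x^2\ll e^{t(1+\theta)}$ already at the first dyadic level), so this is not a defect specific to your approach; in the applications the proposition is only invoked for $x\lesssim\sqrt{t(1+\theta)}$, where both arguments are fine. It would be worth making this restriction explicit rather than asserting the full range $x<4^{t_\theta}$.
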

\noindent The rest of the argument to bound $\mathcal{Z}_2(\theta)$ is the same as the one used for $\mathcal{Z}_2{(0)}$, applied to $\log|\zeta_\tau(h)|-S_{t_\theta}(h)$ instead of $\log|\zeta_\tau(h)|$. The barrier event now restricts the evolution of the shifted sums $(\mathscr{S}_k(h))_k$, and one uses the $\theta\in (-1,0)$ case of Theorem \ref{conditionLD}.

\subsection{Organization} {In Section \ref{sec:moments}, we prove Theorem \ref{moments} from Proposition \ref{smallprimes} and Theorems \ref{UB} and \ref{conditionLD}. These theorems are then proved in Sections \ref{corrsection}, \ref{upperboundsection} and \ref{sec:LD}, respectively. Section \ref{sectionaux} contains proofs of auxiliary results which are used in the proof of Theorem \ref{UB}. The appendix collates known results that are used throughout the paper. }

\section{Upper bound for the partition function}\label{sec:moments}
    We first prove Theorem \ref{moments} for $\theta=0$, in which case we may assume that $\log A \leq \sqrt{t}$ (the desired bound otherwise following directly from Markov's inequality and Lemma \ref{secondmomentzeta}). 
    
    Consider the following set of {\it good} $h$'s, defined by
    \[
        G_A=\{|h|\leq 1 : S_k(h)\in {[L_{\log A}(k), U_{\log A}(k)]} \text{ for all } k\leq {t_*}\},
    \]
    where {$t_*$}, $L_{\log A}(k)$ and $U_{\log A}(k)$ are as in the statement of Theorem \ref{conditionLD} (for $\theta=0$). We are interested in the normalised measure of high points of the zeta function belonging to this good set, meaning
    \[
        \mathcal{S}(V;G_A):=\frac{1}{2}\text{meas}\{|h|\leq 1:\log|\zeta_\tau(h)|>V, h\in G_A\},
    \]
    which we note is a random variable taking values in $[0,1]$. We will also need $$\mathcal{S}(V):=\frac{1}{2}\mathrm{meas}\{|h|\leq 1\, : \log |\zeta_\tau(h)|>V\}.$$

    The proof follows the strategy outlined in \cite{arguinbailey1} (proof of Corollary 1.4), with the main difference being that we use large deviation estimates for zeta \textit{in the set} $G_A$ (given by Theorem \ref{conditionLD}) to get an additional factor of ${(t-t_\theta)}^{-1/2}$. We begin by integrating by parts to get
    \[
        \mathcal{Z}_2(0) = -e^{2V}\mathcal{S}(V)\bigg|^{+\infty}_{-\infty}+2\int_{\mathbb{R}}e^{2V}\mathcal{S}(V)\mathrm{d}V.
    \]
    The boundary term at $-\infty$ is equal to zero, since $\mathcal{S}(V)\leq 1$ by definition. To deal with the limit at $+\infty$, we place ourselves on the event
    $$E_A:=\{\forall h \in [-1,1],\,\log |\zeta_\tau(h)|\leq m(t)+\log A \text{ and } h\in G_A\},$$ where $m(t)=t-\tfrac{3}{4}\log t$, {on which said limit is zero (since $\mathcal{S}(V)=0$ for $V>m(t)+\log A$).} This can be done since $\P{E_A^c}$ is bounded by
    \begin{align*}
         \P{\max_{|h|\leq 1}\log |\zeta_\tau(h)|>m(t)+\log A}+\P{{\substack{\exists k\leq t/2,\\\exists h\in [-1,1]}, S_k(h)\notin [L_{\log A}(k),U_{\log A}(k)]}},
    \end{align*}
     both of these terms being $\ll \log A\cdot e^{-2\log A}\ll1/A$, by Theorem \ref{UB} and 
    the proof of Proposition \ref{base}, respectively. {Noting that $\mathcal{S}(V)=\mathcal{S}(V;G_A)$ on $E_A$,
    we are left with the task of bounding
    \[
        \int_{\mathbb{R}}e^{2V}\mathcal{S}(V;G_A)\mathrm{d}V=\int_{-\infty}^{m(t)+\log A} e^{2V}\mathcal{S}(V;G_A)\mathrm{d}V.
    \]
    Noting that $\mathcal{S}(V;G_A)\leq \mathcal{S}(V)\leq 1$, we can write 
    \begin{align*}
         \int_{-\infty}^{m(t)+\log A} e^{2V}\mathcal{S}(V;G_A)\mathrm{d}V &\leq \int_{-\infty}^{t/4} e^{2V} \mathrm{d}V  +\int_{t/4}^{m(t)+\log A} e^{2V}\mathcal{S}(V;G_A)\mathrm{d}V\\
         &\leq e^{t/2} + \int_{t/4}^{m(t)+\log A} e^{2V}\mathcal{S}(V;G_A)\mathrm{d}V,
    \end{align*}
    and since $e^{t/2}\leq e^t/\sqrt{t}$ for $t\geq 1$, one must then show that the rightmost integral is of the desired order. This requires controlling $\mathcal{S}(V;G_A)$ on this range of $V$.} To that end, we partition said range into sub-intervals of length of the order of $\sqrt{t}$, letting  $(V_j, 1\leq j \leq J)$ be an enumeration of the elements of  $\left[\frac{t}{4},m(t)+\log A\right]\cap (\sqrt{t}\mathbb{Z})$ and defining $V_0=V_1-\sqrt{t}$ and $V_{J+1}=V_J+\sqrt{t}$. The integral in question can then be partitioned into
    \[
        I_j = \int_{V_j}^{V_{j+1}}e^{2V}\mathcal{S}(V;G_A)\mathrm{d}V,\quad 0\leq j\leq J,
    \]
    and we let $E_j$ be the event that the integral $I_j$ does not exceed a constant $a_j$ times its expectation, namely
    \[ 
        E_j := \big\{I_j \leq a_j \E{I_j}\big\}
    \] 
    where 
    \[
        a_j := \frac{A}{\log A}\cdot\left\{
    \begin{array}{ll}
		\big(1+{\big(V_j-t\big)^2}/{t}\big)  & \mbox{if } V_j>t \\
		\big(1+{\big(V_{j+1}-t\big)^2}/{t}\big) & \mbox{if } V_j\leq t \mbox{ and } V_{j+1}\leq t\\ 
        1 & \mbox{if }  V_j\leq t \mbox{ and }V_{j+1}>t  \\
	\end{array}\right..
    \]
    This choice of $a_j$ ensures that on $[V_j,V_{j+1}]$, $$a_j\leq \frac{A}{\log A}\bigg(1+\frac{(V-t)^2}{t}\bigg)$$ while also making the $a_j^{-1}$ summable. By Markov's inequality and a union bound, it follows that the good event $$G=E_A\cap \bigg(\bigcap_j E_j\bigg)$$ is such that
    \[
        \P{G^c} \ll \sum_j a_j^{-1}+1/A \ll {\log (A)}/{A}.
    \]
    We can therefore place ourselves on the event $G$, where we have
    \begin{align}
        &\int_{t/4}^{m(t)+\log A} e^{2V}\mathcal{S}(V;G_A)\mathrm{d}V \leq \sum_{j} a_j \int_{V_{j}}^{V_{j+1}}e^{2V}\E{\mathcal{S}(V;G_A)}\mathrm{d}V \nonumber \\ 
            &\leq \int_{t/4}^{m(t)+\log A} \frac{A}{\log A}\bigg(1+\frac{(V-t)^2}{t}\bigg) e^{2V}\E{\mathcal{S}(V;G_A)}\mathrm{d}V.\label{eq:pre-fubini}
    \end{align}
    {
    By Fubini's theorem and Theorem \ref{conditionLD}, 
    \begin{equation}\label{gooddensity}
               \E{\mathcal{S}(V;G_A)} \ll \frac{\log (A)\left(U_{\log A}({t/2})-V{/2}+\sqrt{t}\right)}{t}\frac{e^{-V^2/t}}{\sqrt{t}},
    \end{equation}
    for $V>2(t_*-t_*^{2/3})$ in \eqref{eq:pre-fubini}. For the remaining $V\in (t/4,2(t_*-t_*^{2/3})]$ the main theorem in \cite{arguinbailey1} gives the bound $\ll e^{-V^2/t}/\sqrt{t}$, and the integral in \eqref{eq:pre-fubini} is thus
    \begin{align*}
        &\ll Ae^{t}\int_{t/4}^{2(t_*-t_*^{2/3})}\bigg(1+\frac{(V-t)^2}{t}\bigg)\frac{e^{-(V-t)^2/t}}{\sqrt{t}} \mathrm{d}V\\&+Ae^{t}\int_{2(t_*-t_*^{2/3})}^{m(t)+\log A}\bigg(1+\frac{(V-t)^2}{t}\bigg)\frac{\left(U_{\log A}({t/2})-V{/2}+\sqrt{t}\right)}{t}\frac{e^{-(V-t)^2/t}}{\sqrt{t}}\mathrm{d}V.
    \end{align*}
    Using the change of variables $u=(t-V)/\sqrt{t}$, the first of these terms is easily seen to satisfy the desired bound. For $V\in (2(t_*-t_*^{2/3}),m(t)+\log A]$ (and $\log A\leq \sqrt{t}$), we can use the estimate
    \begin{align*}
        \frac{\left(U_{\log A}({t/2})-V{/2}+\sqrt{t}\right)}{t}\ll\frac{1}{\sqrt{t}}(\sqrt{t}-V/\sqrt{t}+1)\ll \frac{1}{\sqrt{t}}(1+u),
    \end{align*}
     to conclude that
    \begin{align*}
        \mathcal{Z}_2(0)
        &\ll A \frac{e^{t}}{\sqrt{t}}+ A \frac{e^{t}}{\sqrt{t}} \int_{((3/4)\log t-\log A)/\sqrt{t}}^{2^{1/3}t^{1/6}} (1+u^2)(1+u)e^{-u^2}\mathrm{d}u \\
        &\ll A \frac{e^{t}}{\sqrt{t}}+A \frac{e^{t}}{\sqrt{t}} \int_{u>0} (1+u^2)(1+u)e^{-u^2}\mathrm{d}u\\
        &\ll A \frac{e^{t}}{\sqrt{t}}.
    \end{align*}}
    
    To prove the claim for $\theta\neq 0$, we proceed exactly as above while replacing every occurrence of $\zeta_\tau$ with $\zeta_\tau e^{-S_{t_\theta}}$, $S_k$ with $\mathscr{S}_k$, $t$ with $t-t_\theta$, and using the $\theta<0$ case of Theorem \ref{conditionLD}. We also scale the factors $a_j$ by $e^{-2\sqrt{c\log A}}$, where $c$ is the constant from Proposition \ref{smallprimes}. Doing so shows that the probability that
    \begin{align}\label{eq:meso_moment}
        \frac{1}{e^{t\theta}}\int_{|h|\leq e^{\theta t}}|\zeta_{\tau}e^{-S_{t_\theta}}(h)|^2dh \ll \frac{A}{e^{2\sqrt{c\log A}}} \frac{e^{t-t_\theta}}{\sqrt{t-t_\theta}}
    \end{align}
    holds is $1-O(e^{2\sqrt{c\log A}}\log (A)/A)$.

    Now using Proposition \ref{smallprimes}, we also have that
    \[
        \max_{|h|\leq \log^\theta T} |S_{t_\theta}(h)-S_{t_\theta}(0)| \leq \sqrt{c\log A}
    \]
    occurs with probability $1-O(1/A)$, {and we can therefore place ourselves on the event where this inequality holds. This yields
    \begin{align*}
        e^{-2S_{t_\theta}(0)}\mathcal{Z}_2{(\theta)} \ll A\frac{e^{2\cdot{\max_{h}|S_{t_\theta}(h)-S_{t_\theta}(0)|}}}{e^{2\sqrt{c\log A}}} \frac{e^{t-t_\theta}}{\sqrt{t-t_\theta}} \leq A \frac{e^{t-t_\theta}}{\sqrt{t-t_\theta}},
    \end{align*}
    and the claim follows since $e^{-2S_{t_\theta}(0)}\asymp |P_\tau(\theta)|^2.$}

\section{Correlation of primes up to $\exp(e^{t|\theta|})$}\label{corrsection}This section proves Proposition \ref{smallprimes}, which roughly states that the contribution to $|\zeta_\tau(h)|$ coming from primes $p\leq \exp(e^{t|\theta|})$ is typically constant over $|h|\leq 2e^{t\theta}$.
\bigskip 

 The claim is trivial for $\theta =0$ as $S_{t_\theta}\equiv 0$. Otherwise, we can use Lemma \ref{disclemma} with $N=\exp(2e^{t_\theta})$ and $A=100$ to write
 \begin{align}\label{discmeso}
     \nonumber &\mathbb{P}\Big\{\max_{|h|\leq 2 e^{\theta t}}|S_{t_\theta}(0)-S_{t_\theta}(h)|>x\Big\} \\&\leq \mathbb{P}\Big\{\sum_{|j|\leq 32} |S_{t_\theta}(0)-S_{t_\theta}(h_j)|^2>x^2/C\Big\}+\mathbb{P}\Big\{\sum_{|j|>32} \frac{|S_{t_\theta}(0)-S_{t_\theta}(h_j)|^2}{(|j|^{100}+1)}>x^2/C\Big\}
 \end{align}
 where $h_j=h_j(t)=\pi j/(8e^{t_\theta})$, and $C>0$ is a constant that is assumed to be large without loss of generality. We will bound these probabilities using the following moment estimate, whose proof is deferred to later in the section.

\begin{lemma}\label{MV} For any $h\in \mathbb{R}$ and any positive integer $q$ satisfying $2q\leq e^{t(1+\theta)}$, 
\[
    \mathbb{E}\Big\{|S_{t_\theta}(0)-S_{t_\theta}(h)|^{2q}\Big\}\ll\frac{(2q)!}{2^qq!}\big( e^{2t_\theta}|h|^2\big)^q.
\]
\end{lemma}
To use this, we first note the bound
\[
    \mathbb{P}\Big\{\sum_{|j|\leq32} |S_{t_\theta}(0)-S_{t_\theta}(h_j)|^2>x^2/C\Big\} \ll \sum_{|j|\leq 32}\mathbb{P}\Big\{|S_{t_\theta}(0)-S_{t_\theta}(h_j)|^2>x^2/(64C)\Big\}.
\]
By Markov's inequality and Lemma \ref{MV}, this is 
\[
    \ll \sum_{|j|\leq 32}\frac{(2q)!}{2^qq!}\Big(\frac{64 C\cdot e^{2t_\theta}|h_j|^2}{x^2}\Big)^q \ll \frac{(2q)!}{2^qq!}\Big(\frac{C'}{x^2}\Big)^q
\]
for any positive integer $q$ satisfying $2q\leq e^{t(1+\theta)}$, where $C'=(32\pi)^2C$. Picking $q=\lfloor x^2/2C'\rfloor$ (which is a valid choice by our assumption that $x^2\leq e^{t(1+\theta)}$, and the fact that $C>1$), we can use Stirling's approximation to bound the right-hand side by
\[
    \ll \bigg(\frac{2q}{e}\bigg)^{2q}\bigg(\frac{e}{2q}\bigg)^q \bigg(\frac{C'}{x^2}\bigg)^q \ll e^{-q}\bigg(\frac{2C'}{x^2}\cdot q\bigg)^q \ll e^{-x^2/c},
\]
for some constant $c>0$, proving the claim.

To show that the remaining term in \eqref{discmeso} is negligible, we first note that
\[
    \bigg\{\sum_{|j|>32} \frac{|S_{t_\theta}(0)-S_{t_\theta}(h_j)|^2}{(|j|^{100}+1)}>\frac{x^2}{C}\bigg\}\subseteq \bigcup_{|j|>32} \Big\{|S_{t_\theta}(0)-S_{t_\theta}(h_j)|^2>\frac{x^2}{C}|j|^{50}\Big\}
\]
since $\sum_{|j|>32}|j|^{50}(|j|^{100}+1)^{-1}<1$. It follows that
\[
    \mathbb{P}\Big\{\sum_{|j|>32} \frac{|S_{t_\theta}(0)-S_{t_\theta}(h_j)|^2}{(|j|^{100}+1)}>\frac{x^2}{C}\Big\}  \leq \sum_{|j|>32}\mathbb{P}\Big\{ |S_{t_\theta}(0)-S_{t_\theta}(h_j)|^2>\frac{x^2}{C}|j|^{50}\Big\},
\]
and we can bound each term in the resulting sum using Markov's inequality and Lemma \ref{MV}. To be precise, picking $q=\lfloor x^2/(C\pi^2/32)\rfloor$ (which is $\leq e^{t(1+\theta)}$ for $C>64/\pi^2$) gives
\[
    \sum_{|j|>32}\mathbb{P}\Big\{ |S_{t_\theta}(0)-S_{t_\theta}(h_j)|^{2q}>\Big(\frac{x^2}{C}\Big)^q|j|^{50q}\Big\} \ll \sum_{|j|>32}\frac{(2q)!}{2^qq!}\bigg(\frac{C\pi^2}{64x^2}\bigg)^q |j|^{-40q},
\]
which is $\ll e^{-q}\sum_{|j|>1}|j|^{-2}\ll e^{-x^2/c}$ by Stirling's approximation, for a constant $c>0$.

It remains to prove Lemma \ref{MV}.

\subsection{Proof of Lemma \ref{MV}}

We begin by introducing some notation. Let $(Z_p, \text{$p$ prime})$ be a sequence of independent and identically distributed random variables that are uniformly distributed on the unit circle. Such variables are sometimes called {\it Steinhaus random variables}. For any integer $n$ with prime factorization $\prod_i p_i^{\alpha_i}$ (where the $p_i$ are distinct for different $i$), let 
\[
    Z_n := \prod_{i=1}Z_{p_i}^{\alpha_i}.
\]
Note that we have the orthogonality relation $\mathbb{E}{Z_n\overline{Z_m}}=\ind{n=m}$. 

Consider the random variables 
\begin{equation}
\label{eqn: X_p}
    X_p(h)=\text{Re}\left(Z_p p^{-1/2-ih}+\tfrac{1}{2}Z_p^2p^{-1-2ih}\right), \text{ $p$ prime,}
\end{equation}
which replace $p^{-i\tau}$ by $Z_p$.
The sums $\sum_{e^{1000}<\log p\leq e^{t_{\theta}}} X_p(0)-X_p(h)$ will serve as a random model for $S_{t_\theta}(0)-S_{t_\theta}(h)$. We will show that the former's moments approximate the latter's, and that they are roughly Gaussian. 

We argue as in the proof of Lemma 16 in \cite{FHK1}. Let $\Phi$ be a smooth function on $\mathbb{R}$ satisfying $\Phi(x)\geq 0$ for all $x\in \mathbb{R}$, $\Phi(x)\gg 1 $ for $x\in [-1,1]$, and $\hat{\Phi}$ is compactly supported in $[-1,1]$ (see \cite{FHK1}, p. 37 for an example). Then for any collection $p_1,..., p_k$ and $q_1,...,q_\ell$ of (not necessarily distinct) primes satisfying $\prod_i p_i, \prod_jq_j\leq T$,
\[
    \int_{\mathbb{R}}\bigg(\frac{p_1\cdots p_k}{q_1\cdots q_\ell}\bigg)^{it}{\Phi}\bigg(\frac{t}{2T}\bigg)\mathrm{d}t=2T\hat{\Phi}\bigg(2T\log\Big(\frac{p_1\cdots p_k}{q_1\cdots q_\ell}\Big)\bigg)=2T\hat{\Phi}(0)\mathbf{1}_{p_1\cdots p_k=q_1\cdots q_\ell},
\]
and the right-hand side equals $2T\hat{\Phi}(0)\mathbb{E}\big\{Z_{p_1\cdots p_k}\overline{Z}_{q_1\cdots q_\ell}\big\}$ by the aforementioned orthogonality relation.

It follows that for any $h$ and any integer $q>0$ satisfying $2q\leq e^{t(1+\theta)}$, we can develop the product $(S_{t_\theta}(0)-S_{t_\theta}(h))^{2q}$ to get
\begin{align*}
    \E{|S_{t_\theta}(0)-S_{t_\theta}(h)|^{2q}} \ll \E{\bigg(\sum_{e^{1000}<\log p\leq e^{t_\theta}} X_p(0)-X_p(h)\bigg)^{2q}}.
\end{align*}
The right-hand side can then be compared to moments of a Gaussian analogue. Let
\[
    \mathcal{S}_1=\sum_{e^{1000}<\log p\leq e^{t_\theta}}\text{Re}\bigg(Z_p\cdot\frac{\Delta(p)}{p^{1/2}}\bigg),\quad \mathcal{S}_2=\frac{1}{2}\sum_{e^{1000}<\log p\leq e^{t_\theta}}\text{Re}\bigg(Z_p^2\cdot\frac{\Delta(p)}{p}\bigg),
\]
where $\Delta(p):=|1-p^{-ih}|$, and similarly 
\[
    \mathcal{G}_1=\sum_{e^{1000}<\log p\leq e^{t_\theta}}\bigg(Y_p\cdot\frac{\Delta(p)}{p^{1/2}}\bigg),\quad \mathcal{G}_2=\frac{1}{2}\sum_{e^{1000}<\log p\leq e^{t_\theta}}\bigg(Y_p\cdot\frac{\Delta(p)}{p}\bigg),
\]
where the $(Y_p)_{p\text{ prime}}$  are centered, independent Gaussian random variables of variance $1/2$. Noting that $\E{(\text{Re}Z_p)^{2m}}\leq \E{Y_p^{2m}}$ for any prime $p$ and positive integer $m$, and that the odd moments of $\text{Re}Z_p$ and $Y_p$ are zero, the binomial theorem gives 
\[
    \E{\mathcal{S}_i^{2q}}\leq \E{\mathcal{G}_i^{2q}}\text{ for $i\in \{1,2\}$}.
\]
We can therefore write
$$
\begin{aligned}
    \E{\bigg(\sum_{e^{1000}<\log p\leq e^{t_\theta}} X_p(0)-X_p(h)\bigg)^{2q}} &= \E{\big(\mathcal{S}_1+\mathcal{S}_2\big)^{2q}} \\
    &\leq \bigg( \E{\mathcal{S}_1^{2q}}^{1/2q}+\E{\mathcal{S}_2^{2q}}^{1/2q}\bigg)^{2q}\\
    &\leq \bigg( \E{\mathcal{G}_1^{2q}}^{1/2q}+\E{\mathcal{G}_2^{2q}}^{1/2q}\bigg)^{2q}.
\end{aligned}
$$
{Since $\mathcal{G}_1$ and $\mathcal{G}_2$ are Gaussian random variables, we can then factor out the $2q$-th moment of a standard Gaussian from the right-hand side to get
\begin{align*}
   \frac{(2q)!}{2^qq!}\Bigg(\bigg({\sum_{e^{1000}<\log p \leq e^{t_\theta}}\frac{\Delta(p)^2}{2p}}\bigg)^{1/2}+\bigg({\sum_{e^{1000}<\log p \leq e^{t_\theta}}\frac{\Delta(p)^2}{8p^2}}\bigg)^{1/2}\Bigg)^{2q}.
\end{align*}
By the bound $\Delta(p)^2=2-2\cos(|h|\log p)\leq |h|^2(\log p)^2$, this is no greater than
\[
   \ll \frac{(2q)!}{2^qq!}\left(|h|^2\sum_{e^{1000}<\log p\leq e^{t_\theta}} \frac{(\log p)^2}{p}\right)^{q}.
\]
The sum can then be estimated using a quantitative form of the prime number theorem (see, e.g., Theorem 6.9 in \cite{MontgomeryVaughanBook}), yielding
\begin{equation}\label{eq:error_moment}
    \ll \frac{(2q)!}{2^qq!} \left(e^{2t_\theta}|h|^2\right)^{q}.
\end{equation}
}

\section{Upper bound for the maximum}\label{upperboundsection}

In this section, we show how the recursive scheme in \cite{FHK1} can be adapted to prove Theorem \ref{UB}, emphasizing the ways in which the proof below differs from that of Theorem 2 in \cite{FHK1}. 
In the process, we will need three auxiliary lemmas (Lemma \ref{basedecoupling}, Lemma \ref{lem3} and Lemma \ref{lem4}), whose proofs are postponed to Section \ref{sectionaux}.

\subsection{Initial reductions}
Throughout this section, assume that $\theta\in (-1,0)$ (the $\theta=0$ case being the main result in \cite{FHK1}). To begin with, note that we may replace $|P_\tau(\theta)|$ in the statement of Theorem \ref{UB} by $e^{-S_{t_\theta}(0)}$, since $\log|P_\tau(\theta)|=-S_{t_\theta}(0)+O(1)$. By Lemma \ref{disczeta}, it also suffices to study the maximum on the discrete set
\begin{equation}\label{lattice}
    \mathcal{T}_t^\theta = e^{-t-100}\mathbb{Z}\cap[-2e^{-t_\theta},2 e^{-t_\theta}].
\end{equation}
Theorem \ref{UB} thus follows from the following. 

\begin{theorem}\label{UBdisc} Let $C$ be arbitrary and $\tau$ be uniformly distributed in $[T,2T]$. {Then for large enough $T$ and $2\leq y\leq C(t-t_\theta)/\log (t-t_\theta)$,}
    \[  
        \P{\max_{h\in \mathcal{T}_t^{\theta}} \log|\zeta_\tau(h)| > y+{(t-t_\theta)}-\frac{3}{4}\log(t-t_\theta) +S_{t_\theta}(0) } \ll ye^{-2y-y^2/(t-t_\theta )}.
    \]
\end{theorem}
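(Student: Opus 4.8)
\textbf{Proof plan for Theorem \ref{UBdisc}.}

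The plan is to reduce the maximum over the mesoscopic lattice $\mathcal{T}_t^\theta$ to the maximum of the shifted walk $\mathscr{S}_{t(1+\theta)}(h)$, and then to run the recursive ``sharp second moment'' scheme of \cite{FHK1} on this walk of effective height $t(1+\theta)$. First I would isolate the contribution of the small primes: on the event that $\log|\zeta_\tau(h)|$ is large at some $h$, I write $\log|\zeta_\tau(h)| = S_{t_\theta}(h) + (\log|\zeta_\tau(h)| - S_{t_\theta}(h))$ and observe that the first term is, up to a small error controlled by Theorem \ref{tightness} (equivalently the Proposition in section \ref{corrsection}), equal to $S_{t_\theta}(0)$ uniformly in $|h|\leq \log^\theta T$. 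Since the stated tail bound $ye^{-2y-y^2/t(1+\theta)}$ is of the form one gets from a branching random walk of height $t(1+\theta)$, this tightness estimate's Gaussian tail $Ce^{-x^2/c}$ is more than enough to absorb the discrepancy $S_{t_\theta}(h)-S_{t_\theta}(0)$ into the parameter $y$ (e.g. replacing $y$ by $y-A$ and paying $e^{-A^2/c}$, which is negligible next to $ye^{-2y}$). So it suffices to bound $\P{\max_{h\in\mathcal{T}_t^\theta} \big(\log|\zeta_\tau(h)| - S_{t_\theta}(h)\big) > y + m(t(1+\theta))}$ where $m(k)=\alpha k$ with $\alpha = 1 - \tfrac34 \tfrac{\log t(1+\theta)}{t(1+\theta)}$, i.e. a statement purely about the walk $\mathscr{S}_k(h)$, $k\leq t(1+\theta)$.

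Next I would carry out the recursion of \cite{FHK1} with $t$ replaced throughout by $t(1+\theta)$. The mechanism is: decompose according to the last scale $\ell$ at which the barrier event $B_\ell$ (the walk $\mathscr{S}_k$ staying below $m(k)+U_y(k)$) holds but $B_{\ell+1}$ fails; on each such piece a first (or second) moment bound on the number of $h\in\mathcal{T}_t^\theta$ with $\mathscr{S}_{t_\ell}(h)$ in the relevant window, combined with the mollifier approximation $|\zeta_\tau e^{-S_{k+t_\theta}}| \ll |\zeta_\tau \mathcal{M}_{-1}\cdots\mathcal{M}^{(k)}_{\ell-1}|$ on $D_\ell$ and the mean-value theorem for Dirichlet polynomials (Lemma \ref{MVMV}), gives the ballot-type factor. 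The lower barrier $C_\ell$ and the increment-control events $A_\ell$ enter exactly as in \cite{FHK1} to make the moment computations converge. Summing the contributions over $\ell = -1,\dots,\mathcal{L}$ telescopes to the claimed bound; the exponent $-2y$ comes from the first-moment/ballot factor $e^{-2(m(t(1+\theta))+y)+2m(t(1+\theta))}$ after accounting for the $t(1+\theta)$ lattice points contributing $e^{m(t(1+\theta))} = t(1+\theta)^{-3/4}e^{t(1+\theta)}$, the linear factor $y$ from the ballot theorem, and the Gaussian factor $e^{-y^2/t(1+\theta)}$ from the large-deviation cost of reaching level $m+y$ at the final time.

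The main obstacle, and the place where genuinely new work beyond \cite{FHK1} is needed, is that $\mathscr{S}_k$ is a \emph{longer} Dirichlet polynomial than $S_k$: its primes range over $\exp(e^{t_\theta}) < p \leq \exp(e^{t_\theta + k})$, so the mollifiers $\mathcal{M}_\ell$ and the mean-value estimates must be set up with the shifted frequency range, and the final-increment condition \eqref{finalincrement} has to be chosen (as it is in the excerpt) to guarantee $t - t_\mathcal{L} = O(1)$ while keeping the relevant Dirichlet polynomials short enough ($\leq T^{1/100}$) for Lemma \ref{MVMV} to apply with negligible error. I would check carefully that every mean-value and moment estimate in the \cite{FHK1} recursion survives this substitution — in particular that the diagonal terms in the second moment of $\sum_h |\zeta_\tau \mathcal{M}|^2$ still dominate, since the number of prime factors allowed in $m$ is now governed by $(t_\ell - t_{\ell-1})^{10^5}$ with the \emph{same} $t_\ell$'s but the frequency windows shifted by $t_\theta$. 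Everything else — the discretization via Lemma \ref{disclemma}, the barrier bookkeeping, the telescoping sum over scales — is then a faithful transcription of the $\theta=0$ argument with $t \rightsquigarrow t(1+\theta)$, so I would state those steps briefly and refer to \cite{FHK1} for the details, as the excerpt's preamble to this section announces.
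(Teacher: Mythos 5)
Your overall architecture --- subtract $S_{t_\theta}(h)$, treat the remainder as a branching walk of effective height $t(1+\theta)$, and rerun the recursion of \cite{FHK1} with suitably shifted Dirichlet polynomials --- is the one the paper follows, and your observation that the new technical difficulty is the increased length of $\mathscr{S}_k$ (resolved by compensating with a smaller discretization set, cf.\ Remark~\ref{mesomaxsubtlety}) is exactly right.

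However, the step where you decouple the small-prime fluctuation $S_{t_\theta}(h)-S_{t_\theta}(0)$ from the rest has a genuine gap. You propose a union bound: control $\max_{|h|\le\log^\theta T}|S_{t_\theta}(h)-S_{t_\theta}(0)|\le A$ at cost $O(e^{-A^2/c})$ via Theorem~\ref{tightness}, then replace $y$ by $y-A$ in the target bound. For the error term $e^{-A^2/c}$ to be $o(ye^{-2y})$ one is forced to take $A\gg\sqrt{y}$; but then the main term becomes $(y-A)e^{-2(y-A)-(y-A)^2/t(1+\theta)}$, which is larger than $ye^{-2y-y^2/t(1+\theta)}$ by a factor of order $e^{2A}=e^{\Omega(\sqrt{y})}$. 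No single choice of threshold $A$ makes both pieces small simultaneously once $y\to\infty$; the two demands pull in opposite directions.

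What the paper does instead is partition on the integer value $u$ of $S_{t_\theta}(0)-S_{t_\theta}(h)$ at the high point $h$: writing $\log|\zeta_\tau(h)| = \log|\zeta_\tau e^{-S_{t_\theta}}(h)| + S_{t_\theta}(h)$, the event under study is contained in $\bigcup_{u\in\mathbb{Z}}\{\exists h\in H^\theta(y+u)\cap E^\theta(u)\}$. The crucial additional input is Theorem~\ref{auxUB}, which shows that the probability of each piece factors \emph{multiplicatively}, $\ll g(y+u,\theta)\,e^{-u^2/c}$, so that the sum $\sum_u (y+u)e^{-2(y+u)-(y+u)^2/t(1+\theta)}e^{-u^2/c}$ converges to $O(ye^{-2y-y^2/t(1+\theta)})$: the Gaussian factor cuts off the exponential growth from $u<0$, and both factors decay for $u>0$. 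Obtaining this product structure requires carrying the indicator $\mathbf{1}(h\in E^\theta(u))$ through every step of the recursive scheme (Propositions~\ref{base}, \ref{induction}, \ref{last}) and exploiting that $E^\theta(u)$ is supported on primes below $\exp(e^{t_\theta})$ while the mollified zeta and $\mathscr{S}_k$ live on primes above this cutoff --- this is the content of Lemma~\ref{basedecoupling}, Lemma~\ref{lem3}, and Lemma~\ref{lem4}. Your proposal omits this because you believed the decoupling was already handled by the union bound at the outset; as soon as you try to make that precise you will be forced into the convolution argument.
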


\noindent

Consider the following subset of high points $h$ of $\zeta_\tau e^{-S_{t_\theta}}$
\[
    H^\theta (y) = \left\{h\in \mathcal{T}^\theta_t\,:\, |\zeta_\tau e^{-S_{t_\theta}}(h)|>e^{y}\frac{e^{t-t_\theta}}{(t-t_\theta)^{3/4}}\right\},
\]
as well as the set 
\begin{equation}\label{def:E}
        E^\theta(u)= \big\{h\in \mathcal{T}_t^\theta \,:\,S_{t_\theta}(0)-S_{t_\theta}(h)\in [u, u+1)\big\},\quad u\in \mathbb R,
\end{equation}
which controls how close $S_{t_\theta}(h)$ is to $S_{t_\theta}(0)$ for $h\in \mathcal{T}_t^\theta$. Note that by Proposition \ref{smallprimes}, the probability that $h\in E^\theta(u)$ should obey a Gaussian tail provided $u$ is not too large.

The following result bounds the probability that the set $H^\theta (y)\cap E^\theta(u)$ is non-empty. 
Heuristically, it gives an upper bound on the maximum of $\log |\zeta_\tau(h)|-S_{t_\theta}(h)$, and shows that this quantity behaves somewhat independently from the error $S_{t_\theta}(0)-S_{t_\theta}(h)$.
This result implies Theorem \ref{UBdisc} by essentially integrating over $u$, as we now show.

\begin{theorem}\label{auxUB}
    Let $C>0$ be an arbitrary constant. Then for large enough $t$, $2\leq y<C(t-t_\theta)/\log(t-t_\theta)$ and $|u|\leq C\sqrt{{(t-t_\theta})}$, we have
    \begin{align}
        \P{\exists h \in H^{\theta}(y)\cap E^\theta(u) }&\ll ye^{-2y-y^2/(t-t_\theta)}e^{-u^2/C_0},
    \end{align}
    for an absolute constant $C_0>0$.
    Furthermore, $\mathbb{P}\{\exists h\in H^{\theta}(y)\}\ll ye^{-2y-y^2/(t-t_\theta)}$.
\end{theorem}

\begin{proof}[Proof of theorem \ref{UBdisc} from \ref{auxUB}] 
Partitioning according to the value $u\in\mathbb{Z}$ of $S_{t_\theta}(0)-S_{t_\theta}(h)${, we have
\begin{align}\label{UBdecomposition}
    \mathbb{P}\bigg\{\max_{h\in\mathcal{T}_t^{\theta}}\log |\zeta_\tau(h)|>y&+m(t-t_\theta)+S_{t_\theta}(0)\bigg\}  \nonumber \\
    \ll \mathbb{P}\Big\{\exists h : |S_{t_\theta}(0)-&S_{t_\theta}(h)| > \sqrt{c\big(2y+{y^2}/{(t-t_\theta})\big)}\Big\} \\
    +\,\mathbb{P}\big\{\exists h\in H^{\theta}(y)\big\}&+\sum_{\substack{-\sqrt{c(2y+{y^2}/{(t-t_\theta}))}}<u<0} \mathbb{P}\bigg\{\exists h\in H^\theta\big(y+u\big) \cap E^\theta (u)\bigg\},\nonumber
\end{align}
where $c$ is the constant from Proposition \ref{smallprimes}.
The cutoff $\sqrt{c(2y+{y^2}/{(t-t_\theta}))}$ was chosen to make the first term in \eqref{UBdecomposition} $\ll e^{-2y-y^2/(t-t_\theta)}$ by Proposition \ref{smallprimes}, and we note that $1\leq y+u\leq y$ in this range provided $y>3(c+1)$ and $t$ is large enough (the former can be assumed without loss of generality). To bound what remains, we can therefore apply Theorem \ref{auxUB} to each term to get
\begin{align}
\begin{split}\label{convolutioncalculation}
    &\ll ye^{-2y-y^2/(t-t_\theta)} +\sum_{-\sqrt{c(2y+{y^2}/{(t-t_\theta))}}<u<0} (y+u)e^{-2(y+u)-(y+u)^2/(t-t_\theta)}e^{-u^2/C_0} \\
    &\ll ye^{-2y-y^2/(t-t_\theta)}+ye^{-2y-y^2/(t-t_\theta)} \sum_{u} e^{-f(u)}.
\end{split}
\end{align}
where $f(u)=2u+{(u^2+2yu)}/{(t-t_\theta)}+{u^2}/{C_0}$.
The claim then follows since
\[
    \sum_{u\in \mathbb{Z}} e^{-f(u)}\ll\int_{u\in \mathbb{R}}e^{-f(u)}\mathrm{d}u = \sqrt{\frac{\pi}{(t-t_\theta)^{-1}+C_0^{-1}}}\exp\Big(\frac{C_0(t-t_\theta+y)^2}{(t-t_\theta)(C_0+t-t_\theta)}\Big) \ll 1.
\]}
\end{proof}

\subsection{{The recursive scheme}}
We now prove Theorem \ref{auxUB}. For each $h\in \mathcal{T}_t^\theta$, recall from Section \ref{outlines} that $S_t(h)$ can be thought of as a random walk with $t$ steps, and that the random variables $(S_{k}(h))_{|h|\leq e^{-t_\theta}}$ are highly correlated for $k\leq t_\theta$. We will treat this range of primes separately (via the event $E^\theta(u)$), and adapt the recursive scheme from \cite{FHK1} to prove large deviation estimates for the shifted sums
\begin{equation}
{\mathscr{S}}_k(h):=S_{k+t_\theta}(h)-S_{t_\theta}(h), \quad 0\leq k\leq t-t_\theta.
\end{equation}
These serve as a proxy for $\log |\zeta_\tau e^{-S_{t_\theta}}(h)|$ when $k\approx t-t_\theta$.

To that end, we introduce the time scales{
\begin{equation}\label{eq:timescales}
        t_\ell := (t-t_\theta)-\mathfrak{s}\log_{\ell}(t-t_\theta)
\end{equation}
where $\log_{\ell} $ is the $\ell-$th iterated logarithm (assuming $\ell\geq 1$) and $\mathfrak{s}$ is a large constant (for this section, $\mathfrak{s}=10^6$ will do).} {The final time $\ell=\mathcal{L}$ is defined as the largest integer $\ell$ for which 
\begin{equation}\label{finalincrement}
        \exp(10^6((t-t_\theta)-t_\ell)^{10^5}e^{t_{\ell+1}+t_\theta})\leq T^{1/100}.
\end{equation}
While it is not immediately clear that such an $\mathcal{L}$ exists, a quick manipulation reduces this condition to $\log_\ell (t-t_\theta)>(10^8 \mathfrak{s}^{10^5})^{1/(\mathfrak{s}-10^5)}$($\approx 4.64$ for our choice of $\mathfrak{s}$), from which this is more apparent. This definition also ensures that $(t-t_\theta)-t_{\mathcal{L}}=O(1)$ and $\log_\mathcal{L} (t-t_\theta)>0$.}

The recursive scheme consists of partitioning the set $H^\theta(y)\cap E^\theta(u)$ of high points $h$ in terms of {\it decreasing good sets} $G(t_\ell)=G_\ell$, $0\leq \ell\leq \mathcal L$, defined in \eqref{eqn: G} below. The probability in Theorem \ref{auxUB} can then be decomposed as
\begin{equation}
\label{recursiveschemedecomposition}
\begin{aligned}
    \P{\exists h\in H^\theta(y)\cap E^\theta(u)} &\leq \P{\exists h \in H^\theta(y)\cap G_{0}^c\cap E^\theta(u)}\\
    &+\sum_{\ell=1}^{\mathcal{L}-1}\mathbb{P}\big\{\exists h\in H^\theta(y)\cap (G_\ell\setminus G_{\ell+1})\cap{E^\theta(u)}\}\\
    &+\P{\exists h \in H^\theta(y)\cap G_\mathcal{L}\cap{E^\theta(u)}},
\end{aligned}
\end{equation}
    The sets $G_\ell$ are built around the observation that the points $h\in H^\theta(y)$ at which $|\zeta_\tau e^{-S_{t_\theta}}|$ achieves a high value are those for which the (approximate) random walk $\mathscr{S}_k$ remains within the  corridor
\[
    (m(k)+L_y(k), m(k)+U_y(k)],
\]
consisting of upper and lower \textit{barriers} 
\begin{equation}\label{lowerbarrierl}
        L_y(k):= y -    \begin{cases*}
      \infty & if $1\leq k\leq \lceil y/4 \rceil$ \\ 
      20k & if $y/4 \leq k\leq t_0$ \\
      20\big((t-t_\theta)-k\big)        & if $t_0< k< t-t_\theta$
    \end{cases*} 
\end{equation}
\begin{align}\label{logbarrier}
    U_y(k):= y+    \begin{cases*}
      \infty & if $1\leq k\leq \lceil y/4 \rceil$ \\ 
      10^3\log k & if $y/4 \leq k\leq t_0$ \\
      10^3\log\big((t-t_\theta)-k\big)       & if $t_0< k<t-t_\theta$
    \end{cases*} 
\end{align}
(for $t_0=(t-t_\theta)/2$ the midpoint of $[t_\theta, t]$), centered at the linear interpolation from $y$ to $y+(t-t_\theta)-\tfrac{3}{4}\log(t-t_\theta)$:
\[  
    m(k) = \alpha k,\quad\quad \alpha = 1-\frac{3}{4}\frac{\log (t-t_\theta)}{t-t_\theta}.
\]
On these events ${G}_\ell$, one can obtain large deviation estimates for $\mathscr{S}_k$ by expressing the probabilities in question as moments of Dirichlet polynomials. The barriers restrict the length of these polynomials, so that their moments can be computed with small enough error via the mean-value theorem in Lemma \ref{MVMV} (see the proof of Lemmas \ref{lem3} and \ref{lem4}). Were it not for these restrictions, estimating the probability that an increment $\mathscr{S}_k-\mathscr{S}_{k-1}$ is atypically large could force one to consider Dirichlet polynomials of length much larger than $T$ in Lemma \ref{MVMV}.

The form of these barriers is motivated as follows.
If conditioned to end around $m (t-t_\theta)$, the approximate random walk ${\mathscr S}_{t-t_\theta}$ (of length $t-t_\theta$) should typically fluctuate like a Brownian bridge, i.e.\! like $(k\wedge (t-t_\theta-k))^{1/2}$. We may therefore take any power of $(k\wedge (t-t_\theta-k))$ greater than $1/2$ in the lower barrier $L_y(k)$, and we choose the power $1$ for simplicity. For the upper barrier, we keep with the branching random walk heuristic which suggests that one should (indeed must) pick a barrier lying {below} the typical fluctuations; we pick a large multiple of $\log (k\wedge (t-t_\theta-k))$ following Bramson \cite{Bramson}, and this is ultimately responsible for the appearance of ballot-theorem-type savings in Lemmas \ref{lem3} and \ref{lem4} (cf. Proposition \ref{prop:ballot}). For technical reasons, we do not impose any restrictions for small $k$ in \eqref{logbarrier} and \eqref{lowerbarrierl}.

\bigskip 

From these barriers, we define the decreasing sets
\begin{align*}
    &B_\ell = B_{\ell-1} \cap \left\{h\in \mathcal{T}_t^\theta \,:\, \mathscr{S}_k(h) \leq m(k)+U_y(k) \text{ for all } k\in(t_{\ell-1},t_\ell]\right\}\\
    &C_\ell = C_{\ell-1} \cap \left\{h\in \mathcal{T}_t^\theta \,:\, \mathscr{S}_k(h)>m(k)+L_y(k) \text{ for all } k\in(t_{\ell-1},t_\ell] \right\}
\end{align*}
for $\ell\geq 1$, and
\begin{align*}
    &B_0 = \left\{h\in \mathcal{T}_t^\theta \,:\, \mathscr{S}_k(h) \leq m(k)+U_y(k) \text{ for all } k\in[1,t_0]\right\}\\
    &C_0 = \left\{h\in \mathcal{T}_t^\theta \,:\, \mathscr{S}_k(h)>m(k)+L_y(k) \text{ for all } k\in[1,t_0] \right\}.
\end{align*}

To relate $\zeta$ to the partial sums $\mathscr{S}_k$, we follow  \cite{FHK1} and define the following mollifiers:
\begin{equation}\label{def:mollifiers}
        \mathcal{M}_\ell (h) := \sum_{{\substack{p|m\Rightarrow \log_2 p\ \in(t_{\ell-1}+t_\theta,t_\ell+t_\theta]\\ \Omega_\ell(m)\leq (t_\ell-t_{\ell-1})^{10^5}}}}\frac{\mu(m)}{m^{1/2+i\tau+ih}}, 
\end{equation}
for $\ell\geq 0$, adopting the convention $\mathcal{M}_{-1}\equiv 1$ and $t_{-1}=1000-t_\theta$ so that $\mathcal{M}_{0}$ is a sum over all primes $\exp(1000)\leq p\leq \exp(e^{t_\theta+t_0})$. 
Here, $\Omega_\ell(m)$ denotes the number of distinct prime factors of $m$ in the interval $(\exp(e^{t_{\ell-1}+t_\theta}), \exp(e^{t_{\ell}+t_\theta})]$.
The goal of the mollifier $\mathcal M_\ell(h)$ is to act as a ``statistical'' inverse for the contribution to $\zeta$ coming from the primes in the range $(\exp(e^{t_{\ell-1}+t_\theta}), \exp(e^{t_\ell+t_\theta})]$;
heuristically, one would want to take
$$
\prod_{\log_2 p\ \in(t_{\ell-1}+t_\theta,t_\ell+t_\theta]}\Big(1-\frac{1}{p^{1/2+i(\tau+h)}}\Big)=\sum_{{\substack{p|m\Rightarrow \log_2 p\ \in(t_{\ell-1}+t_\theta,t_\ell+t_\theta]}}} \frac{\mu(m)}{m^{1/2+i\tau+ih}},
$$
but this Dirichlet polynomial has length far exceeding $T$, rendering Lemma \ref{MVMV} inapplicable. The restriction on $\Omega_\ell$ in the definition of $\mathcal{M}_\ell$ truncates this polynomial, shortening it to an admissible length.
(Note that for a typical integer $n$, $\Omega_\ell(n)$ should roughly have size $t_\ell-t_{\ell-1}$; the bound $(t_\ell-t_{\ell-1})^{10^5}$ therefore ensures that most integers are considered
but none of them are too large.) We will also need the following refinement of $\mathcal M_\ell$ up to $k+t_\theta$ for $k\in(t_{\ell-1},t_\ell]$:
\[
\mathcal{M}^{(k)}_{\ell-1}(h)=\sum_{{\substack{p|m\Rightarrow \log_2 p \in(t_{\ell-1}+t_\theta,k+t_\theta])\\ \Omega_\ell(m)\leq (t_\ell-t_{\ell-1})^{10^5}}}}\frac{\mu(m)}{m^{1/2+i\tau+ih}}, \quad \ell\geq -1.
\]
 In this notation, $\mathcal{M}_{\ell-1}^{(t_\ell)}=\mathcal{M}_\ell$  and the product $\mathcal{M}_{-1}\cdots\mathcal{M}_{\ell-1}\mathcal{M}_{\ell-1}^{(k)}$ will serve as an approximant for $ e^{-S_{t_\theta+k}}$ (see Lemma \ref{mollifying}).

 To ensure that this approximation holds with small enough error, we introduce the complex version of the shifted partial sums $\mathscr{S}_k$
 \begin{equation}\label{curlySdefinition}
     \widetilde{\mathscr{S}}_{k}(h)=\sum_{e^{1000+t_\theta}< \log p\leq e^{k+t_\theta}} \frac{1}{p^{1/2+i(\tau+h)}}+\frac{1}{2}\frac{1}{p^{1+2i(\tau+h)}}, \quad k\leq t-t_\theta,
\end{equation}
and restrict to points $h$ belonging to the following sets:
\begin{align*}
    &A_\ell = A_{\ell-1} \cap \left\{ h\in \mathcal{T}_t^\theta \,:\,|\widetilde{\mathscr{S}}_{k}(h)-\widetilde{\mathscr{S}}_{t_{\ell-1}}(h)|\leq 10^3(t_\ell-t_{\ell-1}) \text{ for all } k\in(t_{\ell-1},t_\ell] \right\} \\
    &D_\ell = D_{\ell-1} \cap \big\{h\in \mathcal{T}_t^\theta \,:\, |\zeta_\tau e^{-S_{k+t_\theta}}(h)|\leq c_\ell|\zeta_\tau \mathcal{M}_{-1}\cdots \mathcal{M}_{\ell-1}^{(k)}(h)|+e^{-10^4((t-t_\theta)-t_{\ell-1})}\\ &\quad\quad\quad\quad\quad\quad\quad\quad\quad\quad\quad\quad\quad\quad\quad\quad\quad\quad\quad\quad\quad\quad\quad\quad\quad\quad\text{ for all } k\in (t_{\ell-1},t_\ell]\big\},
\end{align*}
where 
\begin{equation}\label{def:cl}
c_{\ell}:=\prod_{i=0}^{\ell}(1+e^{-(t_{i-1}+t_\theta)}),
\end{equation}
and $A_{-1}=D_{-1}=[-2\log^\theta T,2\log^\theta T]$.

Putting everything together, we define the decreasing good sets to be 
\begin{equation}
\label{eqn: G}
G_\ell=A_\ell\cap B_{\ell}\cap C_\ell \cap D_\ell, \quad \ell\geq -1.
\end{equation}
While we only consider $\theta\in (-1,0)$, note that at $\theta=0$, $\mathscr{S}_k=S_k$ for every $k$ and these definitions reduce to their counterparts in \cite{FHK1}. 

\bigskip 

We evaluate the probabilities in \eqref{recursiveschemedecomposition} using the following propositions. 
\begin{proposition}\label{base}
    Fix $\theta\in (-1,0)$. Let $C>0$ be arbitrary, $1\leq y<C(t-t_\theta)/\log (t-t_\theta)$ and $|u|\leq C\sqrt{t-t_\theta}$. Then there exist constants $K,c>0$ such that
    \[  
        \P{\exists h \in H^\theta(y)\cap G_{0}^c\cap E^\theta(u)} \leq Kye^{-2y-y^2/(t-t_\theta)}e^{-u^2/c}.
    \]
\end{proposition}
\begin{proposition}\label{induction}
    Fix $\theta\in (-1,0)$. Let $C>0$ be arbitrary, $1\leq y<C(t-t_\theta)/\log (t-t_\theta)$, $|u|\leq C\sqrt{t-t_\theta}$, and $\ell\geq 0$ satisfying (\ref{finalincrement}). Then there exist constants $K,c>0$ such that uniformly in all such $\ell$, 
    \begin{align*}
        \mathbb{P}\Big\{\exists &h\in H^\theta(y)\cap (G_\ell\setminus G_{\ell+1})\cap{E^\theta(u)}\Big\}\leq K\frac{ye^{-2y-y^2/(t-t_\theta)}}{\log_{\ell+1}(t-t_\theta)}\cdot e^{-u^2/c}.
    \end{align*}
    
\end{proposition}
\begin{proposition}\label{last}
        Fix $\theta\in (-1,0)$. Let $C>0$ be arbitrary, $1\leq y<C(t-t_\theta)/\log (t-t_\theta)$, and $|u|\leq C\sqrt{t-t_\theta}$. Then there exist constants $K,c>0$ such that
    \[  
        \P{\exists h \in H^\theta(y)\cap G_\mathcal{L}\cap{E^\theta(u)}
        } \leq K ye^{-2y-y^2/(t-t_\theta)}e^{-u^2/c}.
    \]
\end{proposition}
\begin{proof}[Proof of theorem \ref{auxUB}]\label{proofusingscheme}
Using Propositions \ref{base}, \ref{induction} and \ref{last} in the decomposition of Equation \eqref{recursiveschemedecomposition}, we get
    \begin{align*}
    \mathbb{P}\Big\{&\exists h\in H^{\theta}(y)\cap E^{\theta}(u)\Big\} 
        \ll ye^{-2y-y^2/(t-t_\theta)}e^{-u^2/c} + \sum_{1\leq \ell \leq \mathcal{L}}\frac{ye^{-2y-y^2/(t-t_\theta)}e^{-{u^2/c}}}{\log_{\ell}(t-t_\theta)}, 
\end{align*}
for some $c>0$, and the claim then follows from the fact that $\sum_{1\leq \ell \leq \mathcal{L}}\frac{1}{\log_\ell (t-t_\theta)}=O(1)$. {That this sum is bounded can be seen by the change of variables $n=\mathcal{L}-\ell$, which gives 
\begin{equation}
\label{eqn: summable}
    \sum_{1\leq \ell \leq \mathcal{L}}\frac{1}{\log_\ell (t-t_\theta)}=\frac{1}{\log_\mathcal{L}(t-t_\theta)}+\sum_{1\leq n \leq \mathcal{L}-1} \frac{1}{\exp_n\hspace{-2px}\big(\log_\mathcal{L}(t-t_\theta)\big)}
\end{equation}
where $\exp_n$ denotes the exponential map iterated $n$ times.
Recalling that $\log_\mathcal{L}(t-t_\theta)$ is a constant by definition of $\mathcal{L}$ (cf.~ Equation \eqref{finalincrement}), it is clear that this sum is rapidly convergent in $n$.} 
\end{proof}

These propositions are proved in the following three subsections. For simplicity, we will use the abbreviation
 \begin{equation}\label{def:g}
    g(y,\theta)=ye^{-2y-y^2/(t-t_\theta)}
\end{equation}
for the desired tail bound in what follows.

\subsection{Proof of Proposition \ref{base}}  {The proof is based on the fact that the Dirichlet polynomials $\mathscr{S}_{k}$ and $S_{t_\theta}$ behave to some extent like independent Gaussians, being supported on disjoint ranges of primes. This is for instance visible in the following lemma, required for the proof, whose proof is deferred to Section \ref{sectionaux}.}
\begin{lemma}\label{basedecoupling}
    Let $C>0$ be arbitrary, $y<C(t-t_\theta)/\log(t-t_\theta)$ and $|u|< C\sqrt{t-t_\theta}$. Suppose that $\mathcal{Q}$ is Dirichlet polynomial of length $N\leq \exp(\tfrac{1}{100}e^{t})$, supported on integers whose prime factors are all $>\exp(e^{t_\theta})$. Then there exist constants $C',c>0$ such that
\begin{align*}
    &\E{\max_{|h|\leq 2\log^\theta T} |\mathcal{Q}(\tfrac{1}{2}+i\tau+ih)|^2\cdot \mathbf{1}\big(h\in E^\theta (u)\big)} \\
    &\ll \left(e^{-t_\theta}\log N +C'\right) \E{|\mathcal{Q}(\tfrac{1}{2}+i\tau)|^2} e^{-u^2/c}.
\end{align*}
\end{lemma}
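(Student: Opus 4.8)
The plan is to exploit the fact that $\mathcal{Q}$ is supported on integers with all prime factors $> \exp(e^{t_\theta})$, while the event $E^\theta(u)$ depends only on $S_{t_\theta}(0)-S_{t_\theta}(h)$, which is a Dirichlet polynomial supported on primes $p \leq \exp(e^{t_\theta})$. These two objects live on disjoint sets of primes, so when we expand $|\mathcal{Q}(\tfrac12+i\tau+ih)|^2$ into a double sum over $m,n$ and use the mean-value theorem for Dirichlet polynomials (Lemma \ref{MVMV}), the only surviving diagonal terms $m=n$ produce $\sum |a(n)|^2 = \E{|\mathcal{Q}|^2}$ — which is decoupled from the arithmetic of the short primes. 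The subtlety is that the maximum over $|h|\leq \log^\theta T$ has to be handled first, since $\mathcal{Q}$ and the indicator are both functions of $h$.

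The key steps, in order, are as follows. First, I would replace $\max_{|h|\leq \log^\theta T}|\mathcal{Q}(\tfrac12+i\tau+ih)|^2$ by a manageable surrogate: using a Sobolev-type / discretization inequality (as in Lemma \ref{disclemma} or the standard argument of \cite{discretization}), bound the continuous maximum over the short interval by an average of $|\mathcal{Q}|^2$ plus a derivative term over a discrete net, at the cost of an extra factor $e^{-t_\theta}\log N$ coming from the length of $\mathcal{Q}$ and the interval width $\log^\theta T = e^{t_\theta\cdot(\theta/|\theta|)}$ — this is the source of the $e^{-t_\theta}\log N$ factor in the statement. Second, with the maximum now replaced by an integral of $|\mathcal{Q}(\tfrac12+i\tau+ih)|^2$ over $h$ in the short window, I would expand and take the expectation over $\tau$; since the indicator $\mathbf{1}(h\in E^\theta(u))$ is measurable with respect to the primes $\leq \exp(e^{t_\theta})$ and $|\mathcal{Q}|^2$ diagonalizes onto primes $>\exp(e^{t_\theta})$ up to an error $O(N/T)$, the two factor cleanly: $\E{|\mathcal{Q}|^2 \mathbf{1}(h\in E^\theta(u))} \approx \E{|\mathcal{Q}|^2}\cdot \P{h\in E^\theta(u)}$. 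Third, I would bound $\P{h\in E^\theta(u)} = \P{S_{t_\theta}(0)-S_{t_\theta}(h)\in[u,u+1]}$ by the Gaussian tail from Lemma \ref{MV}, giving $\ll e^{-u^2/c}$ (using that $h$ ranges over $|h|\leq \log^\theta T = e^{t_\theta\theta/|\theta|}$ so that $e^{t_\theta}|h|\leq 1$, making the denominator in Lemma \ref{MV} bounded); the constant $C'$ absorbs the finitely many low-lying cross terms and the $O(N/T)$ error. Summing (or integrating) these estimates over the discrete net recombines into the claimed bound.

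The main obstacle I expect is the first step — making the decoupling of $\max_h$ rigorous rather than merely heuristic. One cannot directly assert that $\max_h |\mathcal{Q}(\tfrac12+i\tau+ih)|^2 \mathbf{1}(h\in E^\theta(u))$ factors, because the maximizing $h$ depends on $\mathcal{Q}$, which is correlated with nothing but could still conspire. The resolution is to pass to the continuous $L^2$ average over the short window (where no single $h$ is privileged) via the Sobolev / Gabriel-type inequality, paying the explicit length-dependent factor $e^{-t_\theta}\log N$; once we are averaging in $h$ rather than maximizing, the expectation over $\tau$ commutes with the $h$-integral by Fubini and the orthogonality of disjoint prime supports does the rest. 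A secondary technical point is checking that the derivative term arising from the discretization (involving $\mathcal{Q}'$, i.e. an extra factor of $\log n$ per coefficient) is still controlled by $e^{-t_\theta}\log N \cdot \E{|\mathcal{Q}|^2}$ given $N\leq \exp(\tfrac{1}{100}e^t)$; this follows because $\log n \leq \log N$ uniformly, so the derivative contributes at most an extra $(\log N)$ which is exactly the claimed gain.
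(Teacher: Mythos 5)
Your outline correctly identifies the three essential ingredients — the discretization/Sobolev step that produces the factor $e^{-t_\theta}\log N$, the decoupling of the two factors on disjoint prime supports, and the Gaussian tail $e^{-u^2/c}$ coming from Lemma~\ref{MV} applied with $e^{t_\theta}|h|\leq 1$ — and you correctly locate the dangerous step: one cannot naively factor the expectation of $\max_h$ of a product.

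However, your proposed resolution leaves the key difficulty in place. You replace $\max_h$ by an $L^2$ average and then appeal to ``the orthogonality of disjoint prime supports'' to split $\E{|\mathcal{Q}(h)|^2\mathbf{1}(h\in E^\theta(u))}$ into $\E{|\mathcal{Q}|^2}\P{h\in E^\theta(u)}$. But that orthogonality is only available through the mean-value theorem for Dirichlet polynomials (Lemma~\ref{MVMV}, used in Lemma~\ref{splittinglemma}), which requires \emph{both} factors to be Dirichlet polynomials. The indicator $\mathbf{1}(h\in E^\theta(u))$ is a measurable function of $\tau$ but is not a Dirichlet polynomial, so there is no general theorem letting you split the expectation; the Lebesgue measure on $\tau\in[T,2T]$ is not a product measure over primes, and ``measurable with respect to short primes'' plus ``supported on long primes'' does not by itself imply factoring of expectations. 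This is precisely the point the paper addresses (in the blueprint it follows, namely the proof of Lemma~\ref{lem3}) by first dominating the indicator $\mathbf{1}(h\in E^\theta(u_0))$ by the square of the short Dirichlet polynomial $\mathcal{D}_{1,A}(S_{t_\theta}(0)-S_{t_\theta}(h)-u_0)$ built from a Beurling--Selberg majorant (see equation~\eqref{thetaind} and the construction of $\mathcal{D}_{\Delta,A}$ near the start of Section~\ref{sectionaux}). Once the indicator is replaced by a genuine Dirichlet polynomial, the whole object $\max_h |\mathcal{Q}(h)|^2|\mathcal{D}_{1,A}(\cdots)|^2$ is a maximum of (a linear combination of squares of) Dirichlet polynomials of controlled length; Lemma~\ref{disclemma}/\cite{FHK1}~Lemma~27 then gives the discrete net with the $e^{-t_\theta}\log N$ loss, Lemma~\ref{splittinglemma} splits the expectation at each lattice point because both factors are now Dirichlet polynomials on disjoint primes, and Lemma~\ref{gaussiancomparison} together with Lemma~\ref{MV} converts $\E{|\mathcal{D}_{1,A}|^2}$ back into $\P{h\in E^\theta(u_0)}\ll e^{-u^2/c}$. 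Your argument would become correct if you inserted this Beurling--Selberg step before invoking orthogonality; as written it is a heuristic exactly at the point you flagged as the main obstacle.
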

We now prove Proposition \ref{base}, beginning with a union bound to get
 \begin{align}\label{firstunionbound}
     &\mathbb{P}\big\{\exists h\in H^\theta(y)\cap{E^\theta(u)}\cap G_0^c\big\}\nonumber \\ 
     &\leq \P{\exists h\in A_0^c}+ \P{\exists h \in B_0^c
     \cap{E^\theta(u)}} \\ 
     &+\P{\exists h \in D_0^c \cap A_0} +\P{\exists h \in C_0^c\cap A_{0}\cap E^{\theta}(u)}. \nonumber
 \end{align}
{Recall that $t_0=(t-t_\theta)/2$ and that $g(y,\theta)=ye^{-2y-y^2/(t-t_\theta)}$ (cf.~Equation \eqref{def:g}).} By a union bound on $h$ and $k\leq t_0-t_{-1}=(t+t_\theta)/2$, and the Gaussian tail bound in Equation \eqref{Gaussiantail}, we have
\[
    \P{\exists h \in A_0^c} \ll e^{t(1+\theta)}(t+t_\theta)\exp(-10^2(t_0+t_\theta)) \ll g(y,\theta)e^{-u^2/c},
\]
where the rightmost inequality holds in the assumed ranges of $y$ and $u$.

For the second term in Equation \eqref{firstunionbound}, note that if there exists a $k\in[1,t_0]$ and $h$ for which
\[
    \mathscr{S}_k(h)>m(k)+U_y(k),
\]
{then $k$ must be greater than $y/4$ (the right-hand side being infinite otherwise). It follows that if such a $k$ exists, then}
\[
    \sum_{y/4<k\leq t_0} \max_{|h|\leq 2\log^\theta T} \frac{|\mathscr{S}_k(h)|^{2q_k}}{(y+k+10\log k)^{2q_k}} \geq 1, \mbox{ for any sequence $(q_k)_k \subseteq \mathbb{Z}_{\geq 1}$}
\]
and we thus have the bound
 \begin{align}\label{B0}
      \P{\exists h \in B_0^c\cap E^{\theta}(u)} 
      &\leq  \sum_{y/4 < k\leq t_0}  \E{\max_{|h|\leq 2\log^\theta T} \frac{|\mathscr{S}_k(h)|^{2q_k}}{(y+k+10\log  k)^{2q_k}} \mathbf{1}\big(h\in E^\theta (u)\big)}.
 \end{align}
Picking $q_k = \lceil (k+10\log k+y)^2/k\rceil$, we find that $\mathscr{S}_k(h)^{2q_k}$ is a Dirichlet polynomial of length $\exp(2q_ke^{k+t_\theta}) (\leq \exp(\tfrac{1}{100}e^{t})$ for large enough $t$). We can thus apply Lemma \ref{basedecoupling} to each summand in \eqref{B0} { to get
\begin{align*}
    \P{\exists h \in B_0^c\cap E^{\theta}(u)}
     &\ll\sum_{y/4< k \leq t_0}e^{-u^2/c}e^k q_k \E{\frac{|\mathscr{S}_k(0)|^{2q_k}}{(y+k+10\log k)^{2q_k}}}.
\end{align*}
Using the Gaussian moment estimate in Lemma \ref{gaussianmoments}, followed by Stirling's approximation as in the proof of Corollary \ref{cor:tails} (cf.~ Equation \eqref{eq:stirling}), we have that
\begin{equation}\label{eq:basestirling}       \E{\frac{|\mathscr{S}_k(0)|^{2q_k}}{(y+k+10\log k)^{2q_k}}} \ll e^{-q_k}
\end{equation}
for each $y/4<k\leq t_0$.
The sum is thus bounded by
 \begin{align*}
     &\ll e^{-u^2/c}\sum_{y/4<k\leq t_0} e^{k}\frac{(k+y)^2}{k} \exp\big(-(k+y+10\log k)^2/k\big) \\
     &\ll  e^{-u^2/c}e^{-y^2/(t-t_\theta)-2y}\sum_{y/4<k\leq t_0} e^{k}\frac{(k+y)^2}{k} e^{-k-20\log k}
     \\
     &\ll e^{-u^2/c}g(y,\theta)  \sum_{y/4<k\leq t_0}(k+y^2k^{-1})k^{-20}\\
     &\ll e^{-u^2/c}g(y,\theta),
 \end{align*}
 where in the second line, we used the fact that $e^{-y^2/k}\leq e^{-y^2/(t-t_\theta)}$ uniformly over $k\leq t_0$.
  }
  
We use a similar approach to deal with $C_0^c$, with the bound
 \begin{align*}
     \P{\exists h \in C_0^c\cap E^\theta(u)} &\leq \sum_{y/4 < k\leq t_0} \E{\max_{|h|\leq \log^\theta T}\frac{|\mathscr{S}_k(h)|^{2q_k}}{\big(20k-y\big)^{2q_k}} \mathbf{1}(h\in E^\theta(u))},
 \end{align*}
holding for any $q_k\geq 1$. By picking $q_k=\lceil (20k-y)^2/k\rceil$ (making $\mathscr{S}_k(h)^{2q_k}$ a Dirichlet polynomial of length $\exp(2q_ke^{k+t_\theta})\leq\exp(\tfrac{1}{100}e^t)$) and applying Lemma \ref{basedecoupling}, this is
 \[
     \ll e^{-u^2/c}\sum_{y/4 < k\leq t_0} (q_ke^{k}) \E{\frac{|\mathscr{S}_k(0)|^{2q_k}}{\big(20k-y\big)^{2q_k}}}.
 \]
Using the Gaussian moment estimate in Lemma \ref{gaussianmoments} for $\mathscr{S}_k$ and Stirling's approximation as we did for Equation \eqref{eq:basestirling} gives 
 \begin{align*}
     &\ll e^{-u^2/c}\sum_{y/4 < k \leq t_0} q_ke^{k} \exp\bigg(-\frac{(20k-y)^2}{k}\bigg).
 \end{align*}
 By the estimate $q_k\ll k$ in $y/4<k\leq t_0$ and elementary manipulations, this is
 \begin{align*}
     &\ll e^{-u^2/c}\sum_{y/4 < k \leq t_0} k e^{k} \exp\bigg(-400k-\frac{y^2}{k}+{40y}\bigg)\\
     &\ll e^{-u^2/c}\exp\left({-2y-y^2/(t-t_\theta)}\right).
 \end{align*}

 We conclude by proving that $\P{\exists h \in D_0^c\cap A_0}$ in Equation \eqref{firstunionbound} is $\ll e^{-100t}$, from which the proposition follows {since $e^{-100t}$ is much smaller than  $g(y,\theta)e^{-u^2/c}$ in our range of $y$ and $u$}. This requires following lemma, which justifies the use of mollifiers as a proxy for $e^{-(\mathscr{S}_k-\mathscr{S}_{t_{\ell-1}})}$.
 \begin{lemma}[Lemma 23 in \cite{FHK1}] \label{mollifying}
     Let $\ell\geq 0$, $k\in (t_{\ell-1},t_\ell]$ and suppose that $h\in A_\ell$. Then
     \[
        e^{-(\mathscr{S}_{k}(h)-\mathscr{S}_{t_{\ell-1}}(h))} \leq (1+e^{-(t_{\ell-1}+t_\theta)})|\mathcal{M}_{\ell-1}^{(k)}(h)|+e^{-10^5(t_{\ell}-t_{\ell-1})}.
     \]
 \end{lemma}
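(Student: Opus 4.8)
This is Lemma~23 of \cite{FHK1}: for $\theta=0$ (where $\mathscr{S}_k=S_k$, $\tilde{\mathscr{S}}_k=\tilde S_k$, $\mathcal{M}_\ell=M_\ell$) it is that lemma verbatim, and for $\theta<0$ it is the same statement after applying the shift $k\rightsquigarrow k+t_\theta$ to every prime range, the lengths $t_\ell-t_{\ell-1}$ governing the mollifier truncation being unchanged; the proof then goes through word for word, and we only recall its mechanism. Let $\mathcal{P}$ be the set of primes appearing in $\tilde{\mathscr{S}}_k(h)-\tilde{\mathscr{S}}_{t_{\ell-1}}(h)$ (those with $e^{t_{\ell-1}+t_\theta}<\log p\le e^{k+t_\theta}$), put $K:=(t_\ell-t_{\ell-1})^{10^5}$, and set
\[
D(h):=\tilde{\mathscr{S}}_k(h)-\tilde{\mathscr{S}}_{t_{\ell-1}}(h)=\sum_{p\in\mathcal{P}}\big(p^{-1/2-i(\tau+h)}+\tfrac12 p^{-1-2i(\tau+h)}\big),\qquad E(h):=\prod_{p\in\mathcal{P}}\big(1-p^{-1/2-i(\tau+h)}\big).
\]
Then $e^{-(\mathscr{S}_k(h)-\mathscr{S}_{t_{\ell-1}}(h))}=|e^{-D(h)}|$, and $\mathcal{M}_{\ell-1}^{(k)}(h)=\sum_{m\ \mathrm{sq.free},\ p\mid m\Rightarrow p\in\mathcal{P},\ \omega(m)\le K}\mu(m)m^{-1/2-i(\tau+h)}$ is precisely the partial sum of the Dirichlet series of $E(h)$ over integers with at most $K$ prime factors. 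It thus suffices to establish, on $A_\ell$, the two bounds $|e^{-D(h)}|\le(1+e^{-(t_{\ell-1}+t_\theta)})|E(h)|$ and $|E(h)-\mathcal{M}_{\ell-1}^{(k)}(h)|\le\tfrac12 e^{-10^5(t_\ell-t_{\ell-1})}$.

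The first bound does not use $A_\ell$. From $-\log(1-x)=x+\tfrac12 x^2+O(|x|^3)$, valid for $|x|\le\tfrac12$, each Euler factor equals $\exp\!\big(-p^{-1/2-i(\tau+h)}-\tfrac12 p^{-1-2i(\tau+h)}\big)\exp\!\big(O(p^{-3/2})\big)$, so $e^{-D(h)}=E(h)\exp\!\big(O\big(\sum_{p\in\mathcal{P}}p^{-3/2}\big)\big)$. Since $\min\mathcal{P}>\exp(e^{t_{\ell-1}+t_\theta})$ we get $\sum_{p\in\mathcal{P}}p^{-3/2}\ll\exp\!\big(-\tfrac12 e^{t_{\ell-1}+t_\theta}\big)$, which is $\le\tfrac12 e^{-(t_{\ell-1}+t_\theta)}$ because $t_{\ell-1}+t_\theta\ge 1000$; taking absolute values gives the claim.

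The second bound is the crux, and the only place $A_\ell$ enters — a termwise estimate is hopeless because $\sum_{p\in\mathcal{P}}p^{-1/2}$ is doubly exponentially large. With $E_z:=\prod_{p\in\mathcal{P}}(1-zp^{-1/2-i(\tau+h)})$ one has $\mathcal{M}_{\ell-1}^{(k)}(h)=\sum_{n\le K}[z^n]E_z$, hence for any $1<r<\sqrt{\min\mathcal{P}}$,
\[
E(h)-\mathcal{M}_{\ell-1}^{(k)}(h)=\sum_{n>K}[z^n]E_z=\frac{1}{2\pi i}\oint_{|z|=r}\frac{E_z}{z^{K+1}(z-1)}\,dz.
\]
On $|z|=r$ we have $|E_z|=\exp\!\big(-\Re\sum_{j\ge1}\tfrac1j u_j(h)z^j\big)$ with $u_j(h):=\sum_{p\in\mathcal{P}}p^{-j/2-ij(\tau+h)}$; on $A_\ell$ the key inputs are $|u_1(h)|\le|D(h)|+\tfrac12|u_2(h)|\le 10^3(t_\ell-t_{\ell-1})+\tfrac12\delta$ and $|u_2(h)|\le\delta:=\sum_{p\in\mathcal{P}}p^{-1}\le t_\ell-t_{\ell-1}$, while $\sum_{j\ge3}\tfrac1j|u_j(h)|r^j\ll r^3\exp(-\tfrac12 e^{t_{\ell-1}+t_\theta})=O(1)$ for the radii in play. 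Thus $\max_{|z|=r}|E_z|\le\exp\!\big(Rr+\tfrac12\delta r^2+O(1)\big)$ with $R:=2\cdot10^3(t_\ell-t_{\ell-1})$, and the contour estimate yields $\big|E(h)-\mathcal{M}_{\ell-1}^{(k)}(h)\big|\ll\exp\!\big(Rr+\tfrac12\delta r^2-K\log r+O(1)\big)$. Choosing the radius to balance the terms (essentially $r\asymp\sqrt{K/\delta}$ when $\delta\ge R^2/K$, and $r\asymp K/R$ otherwise, so that $\tfrac12\delta r^2\lesssim K$ in both cases) and using that $K=(t_\ell-t_{\ell-1})^{10^5}$ is an overwhelming power of $t_\ell-t_{\ell-1}$ — which is $\ge\mathfrak{s}=10^6$ for $\ell\ge1$ and $\gg1$ for $\ell=0$ once $T$ is large — together with $\delta\le t_\ell-t_{\ell-1}$ (whence $\log(K/\delta)\ge(10^5-1)\log(t_\ell-t_{\ell-1})$ dominates $R$), the exponent is $\le-10^4 K\le-10^9(t_\ell-t_{\ell-1})$, far below $-10^5(t_\ell-t_{\ell-1})$. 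This proves the second bound, and the lemma follows.

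The main obstacle is exactly this last estimate: one must resist bounding $E(h)-\mathcal{M}_{\ell-1}^{(k)}(h)$ term by term (the relevant range of primes being enormous) and instead exploit that the event $A_\ell$ pins down the \emph{short} Dirichlet polynomial $D(h)=u_1+\tfrac12 u_2$ while the truncation level $K$ is an overwhelming power of every other quantity, so that Cauchy's formula together with Stirling's estimate crush the tail of the Euler product well below $e^{-10^5(t_\ell-t_{\ell-1})}$. Everything else — the passage through the Euler product, the $t_\theta$-shift, and the verification that the numerical constants in \cite{FHK1} make each inequality hold uniformly in $\ell$ — is routine bookkeeping.
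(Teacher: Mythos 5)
Your proof is correct and reconstructs in detail the argument of Lemma~23 in \cite{FHK1}, which the paper simply cites as transferring verbatim (after the $t_\theta$-shift) to the walks $\mathscr{S}_k$ and the mollifiers $\mathcal{M}_{\ell-1}^{(k)}$. The two-step decomposition --- Euler-product comparison to produce the multiplicative factor $1+e^{-(t_{\ell-1}+t_\theta)}$, then a Cauchy contour estimate that exploits the bound $|D(h)|\le 10^3(t_\ell-t_{\ell-1})$ from $A_\ell$ together with the overwhelming truncation threshold $K=(t_\ell-t_{\ell-1})^{10^5}$ to crush the tail of the Euler product --- is precisely the mechanism of the cited lemma, and your bookkeeping of the shifted prime ranges is consistent with the paper's conventions.
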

 \begin{proof}
     The proof is identical to that of Lemma 23 in \cite{FHK1}, noting that their argument holds for our choice of times $t_{\ell}$ and mollifiers $\mathcal{M}_\ell^{(k)}$.
 \end{proof}
 {
 Recall that $t_{-1}=1000-t_\theta$ by convention and $c_0=1+e^{-1000}$ by definition (cf.~\eqref{def:cl}).
 This implies that that $\mathscr{S}_k-\mathscr{S}_{t_{-1}}=S_{t_\theta+k}$. 
 With this notation, if $h\in \mathcal{T}_t^\theta\cap A_0$ is such that $|\zeta_\tau(h)|\leq e^{100t}$, then the previous lemma implies that
 \begin{align*}
     |(\zeta_\tau e^{-S_{t_\theta+k}})(h)|&\leq c_0|\zeta_\tau\mathcal{M}_{-1}^{(k)}(h)|+e^{100t-10^5(t_0-t_{-1})}\\ 
     &\leq c_0|\zeta_\tau\mathcal{M}_{-1}^{(k)}(h)|+e^{-10^4(t-t_\theta-t_{-1})}.
 \end{align*}
 It therefore suffices to show that $\P{\exists h:|\zeta_\tau(h)|> e^{100t}}\ll g(y,\theta)e^{-u^2/c}$. This follows immediately from a union bound over $h\in \mathcal{T}_t^\theta$, followed by Chebyshev's inequality, using a classical second moment bound for the zeta function (Lemma \ref{secondmomentzeta}):
 \[
    \P{\exists h\in \mathcal{T}_t^\theta:|\zeta_\tau(h)|> e^{100t}} \ll e^{t(1+\theta)}\cdot \frac{\E{|\zeta_\tau(h)|^{2}}}{e^{200t}}\ll e^{-199t+t}.
 \] 

 }
 
\subsection{Proof of Proposition \ref{induction}}

We follow the strategy behind the proof of Proposition 2 in \cite{FHK1}, making the necessary changes to consider $\mathscr{S}_k$ instead of $S_k$.
{
We will need the following refinements of $A_\ell, B_\ell, C_\ell$ and $D_{\ell}$ to study increments of $\mathscr{S}_k$ for $k\in (t_{\ell},t_{\ell+1}]$:
\begin{align*}
    A_{\ell}^{(k)}&=A_\ell \cap \{h\in \mathcal{T}_t^\theta\,:\, |\widetilde{\mathscr{S}}_k(h)-\widetilde{\mathscr{S}}_{t_\ell-1}(h)|\leq 10^3(t_\ell-t_{\ell-1}) \text{ for all } t_\ell<j\leq k\}\\
    B_{\ell}^{(k)}&=B_\ell \cap \{h\in \mathcal{T}_t^\theta\,:\, \mathscr{S}_j(h)\leq m(j)+U_y(j) \text{ for all } t_\ell<j\leq k\}\\
    C_{\ell}^{(k)}&=C_\ell \cap \{h\in \mathcal{T}_t^\theta\,:\,\mathscr{S}_j(h)>m(j)+L_y(j)\text{ for all } t_\ell<j\leq k \}\\
    D_{\ell}^{(k)}&=D_\ell \cap \{h\in \mathcal{T}_t^\theta\,:\, |\zeta_\tau e^{-S_{k+t_\theta}}(h)|\leq c_{\ell+1}|\zeta_\tau \mathcal{M}_{-1}\cdots \mathcal{M}_{\ell}\mathcal{M}_{\ell}^{(k)}(h)|\\
    &\quad\quad\quad\quad\quad\quad+e^{-10^4((t-t_\theta)-t_{\ell-1})}\text{ for all } t_{\ell}<j \leq k\}
\end{align*}
where $c_{\ell+1}=\prod_{i=0}^{\ell+1}(1+e^{-t_{i-1}+t_\theta})$, and $A_{\ell}^{(t_{\ell+1})}=A_{\ell+1}$ (similarly for $B_{\ell+1}, C_{\ell+1}$ and $D_{\ell+1}$).

\bigskip

The proof relies on two lemmas, whose proofs are again deferred to Section \ref{sectionaux}.
The intuition behind them is that expectation of a product of random variables depending on primes in  $(2,t_\theta]$, $(t_\theta, t_\theta+k]$ and $(t_\theta+k, t]$, respectively, should decouple into the product of the corresponding expectations. 

In the following, the random variables in question are $\mathbf{1}(E^\theta(u))$, an indicator involving the sums $(\mathscr{S}_j)_j$ for $j\in (1, k]$, and the maximum over $|h|\leq 2\log^\theta T$ of a (short enough) Dirichlet polynomial supported on primes  $p>\exp(e^{t_\theta+k})$.
\begin{lemma}\label{lem3}
    Let $\ell\geq 0$ satisfy (\ref{finalincrement}). Suppose that $\mathcal{Q}$ is Dirichlet polynomial of length $N\leq \exp(\tfrac{1}{100}e^{t})$ supported on integers all of whose prime factors are $>\exp(e^{k+t_\theta})$. Let $C>0$ be arbitrary, $1\leq C(t-t_\theta)/\log (t-t_\theta)$ and {$|u|< C\sqrt{t-t_\theta}$}, $L_y(k)<w-m(k)<U_y(k)$, we have
    \begin{align}\label{eq:4.8}
        \E{\max_{|h|\leq 2\log^\theta T}|\mathcal{Q}(\tfrac{1}{2}+i\tau+ih)|^2\cdot \mathbf{1}\big({h\in B_\ell^{(k)}\cap C_\ell^{(k)}\cap E^\theta(u)\text{, } \mathscr{S}_k(h)\in (w,w+1]}\big)} \nonumber \\
        \ll\E{|\mathcal{Q}(\tfrac{1}{2}+i\tau)|^2}\left(e^{-t_\theta}\log N+e^{k}\big((t-t_\theta)-k\big)^{800}\right)  \nonumber\\
         e^{-u^2/c}\times\frac{y(U_y(k)-w+m(k)+2)e^{-w^2/k}}{k^{3/2}}.
    \end{align}
\end{lemma}
\noindent The last term in \eqref{eq:4.8} arises from the application of a ballot theorem (Proposition \ref{prop:ballot}), which is used to estimate the probability that the partial sum $\mathscr S_j$ remains below $U_y(j)$ at every $j\leq k$, and ends at $\mathscr S_k\approx w$. The same term would arise were $\mathscr{S}_k$ a genuine Gaussian random walk with variance-$1/2$ increments.

We also require a variant of Lemma \ref{lem3} that  controls $\log|\zeta_\tau(h)|-S_{k+t_\theta}(h)$, which represents the contribution to $\log|\zeta_\tau(h)|$ from primes $p>\exp(e^{k+t_\theta})$. This can be achieved using a \textit{twisted fourth moment} estimate for the Riemann zeta function (see Lemma \ref{twisted}), which in this cases bounds  the expectation of the product of $|\zeta_\tau \mathcal{M}_{-1}\cdots\mathcal{M}_\ell\mathcal{M}_\ell^{(k)}(h)|^4$ with Dirichlet polynomials supported on $p <\exp(e^{k+t_\theta})$. Here, too, we obtain decoupling between random variables supported on disjoint ranges of primes, and we note that a second moment would not be sufficient for our purposes.
\begin{lemma}\label{lem4}
Let $\ell\geq 0$ satisfy (\ref{finalincrement}). Let $k\in[t_{\ell},t_{\ell+1}]$, and let $\eta(m)$ be a sequence of complex coefficients with $|\eta(m)|\ll\exp(\tfrac{1}{1000}e^t)$ for all $m\geq 1$. Let 
\[
    \mathcal{Q}_\ell^{(k)}(h):=\sum_{\substack{p|m\implies p \in(\exp(e^{t_\ell+t_\theta}), \exp(e^{k+t_\theta})]\\ \Omega_{\ell+1}(m)\leq (t_{\ell+1}-t_\ell)^{10^4} }} \frac{\eta(m)}{m^{1/2+i\tau+ih}}.
\]
Lastly, let $C>0$ be arbitrary, $1\leq y< C(t-t_\theta)/\log(t-t_\theta)$ and {$|u|< C\sqrt{t-t_\theta}$}. Then for any $h\in[-2\log^\theta T, 2\log^\theta T]$, and $L_y(t_\ell)<w-m(k)<U_y(t_\ell)$, 
\begin{align*}
\E{|(\zeta_\tau\mathcal{M}_{-1}\cdots\mathcal{M}_{\ell}\mathcal{M}_{\ell}^{(k)}(h)|^4\cdot |\mathcal{Q}_\ell^{(k)}(h)|^2\cdot\mathbf{1}\left(h\in B_\ell\cap C_\ell\cap E^\theta(u),\, \mathscr{S}_{t_\ell}\in(v,v+1]\right)} \\
\ll e^{4((t-t_\theta)-k)} \E{|\mathcal{Q}_\ell^{(k)}(h)|^2} y(U_y(t_\ell)-v+m(t_\ell)+2)e^{-v^2/t_\ell}(t-t_\theta)^{-3/2}e^{-u^2/c}.
\end{align*}
\end{lemma}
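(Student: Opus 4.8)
The plan is to adapt the proof of Lemma~4 in \cite{FHK1}; the two new features are that we work with the shifted walk $\mathscr{S}_k=S_{k+t_\theta}-S_{t_\theta}$ rather than $S_k$ (so every Dirichlet polynomial in sight is a factor of ``length'' $e^{t_\theta}$ longer than its counterpart there), and that we must separate off the indicator $\mathbf{1}(h\in E^\theta(u))$, which is supported on the primes $p\leq\exp(e^{t_\theta})$. Throughout I would organise the computation around three blocks of primes: the \emph{small} primes $p\leq\exp(e^{t_\theta})$, carrying $E^\theta(u)$ and the mollifier $\mathcal{M}_{-1}$; the \emph{intermediate} primes $\exp(e^{t_\theta})<p\leq\exp(e^{k+t_\theta})$, carrying $\mathscr{S}_j$ for $j\leq t_\ell$ (hence $B_\ell$, $C_\ell$ and $\{\mathscr{S}_{t_\ell}\in(v,v+1]\}$), the mollifiers $\mathcal{M}_0\cdots\mathcal{M}_\ell\mathcal{M}_\ell^{(k)}$ and the twist $\mathcal{Q}_\ell^{(k)}$; and the \emph{large} primes $p>\exp(e^{k+t_\theta})$, which carry the part of $\zeta_\tau$ not removed by the mollifier.

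\textbf{Reduction of $\zeta$.} As in \cite{FHK1}, the arithmetic input is the approximate functional equation together with a twisted fourth--moment estimate for $\zeta$: raising $\zeta_\tau\mathcal{M}_{-1}\cdots\mathcal{M}_\ell\mathcal{M}_\ell^{(k)}$ to the fourth power and multiplying by $|\mathcal{Q}_\ell^{(k)}|^2$, hypothesis (\ref{finalincrement}) guarantees that the short Dirichlet--polynomial factors $\mathcal{M}_\ell^{(k)}$, $\mathcal{Q}_\ell^{(k)}$ (and the mollifier products) have length $\leq T^{1/100}$, which is what is needed for the reduction of \cite{FHK1} (resting on a twisted fourth moment estimate for $\zeta$ and the mean value theorem for Dirichlet polynomials, Lemma~\ref{MVMV}) to apply. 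After this reduction one is computing, in the model with i.i.d.\ uniform $(Z_p)$, the expectation of $|\mathcal{R}(h)|^4\,|\mathcal{M}_\ell^{(k)}(h)|^4|\mathcal{Q}_\ell^{(k)}(h)|^2$ times the indicator, where $\mathcal{R}$ is supported on the large primes. Having cancelled $\zeta$'s primes up to $\exp(e^{k+t_\theta})$, the surviving primes span a $\log\log$--range of length $t-(k+t_\theta)=t(1+\theta)-k$, whence $\E{|\mathcal{R}|^4}\ll e^{4(t(1+\theta)-k)}$; the truncated mollifiers have $O(1)$ fourth moments thanks to their $\Omega$--truncations, so $\E{|\mathcal{M}_\ell^{(k)}|^4|\mathcal{Q}_\ell^{(k)}|^2}\ll\E{|\mathcal{Q}_\ell^{(k)}|^2}$.

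\textbf{Factoring and the ballot estimate.} In the independent model the factor $|\mathcal{R}|^4$ (large primes), the factor $|\mathcal{M}_\ell^{(k)}|^4|\mathcal{Q}_\ell^{(k)}|^2$ (intermediate primes), the event $E^\theta(u)$ (small primes) and the events $B_\ell,C_\ell,\{\mathscr{S}_{t_\ell}\in(v,v+1]\}$ (intermediate primes, but disjoint from those of $\mathcal{M}_\ell^{(k)},\mathcal{Q}_\ell^{(k)}$ since $\mathscr{S}_j$, $j\leq t_\ell$, uses only $p\leq\exp(e^{t_\ell+t_\theta})$) are all independent, so the expectation factors as
\[
\E{|\mathcal{R}|^4}\cdot\E{|\mathcal{M}_\ell^{(k)}|^4|\mathcal{Q}_\ell^{(k)}|^2}\cdot\P{E^\theta(u)}\cdot\P{B_\ell\cap C_\ell,\ \mathscr{S}_{t_\ell}(h)\in(v,v+1]}.
\]
Since $|h|\leq\log^\theta T=e^{-t_\theta}$, Lemma~\ref{MV} gives $\P{E^\theta(u)}\ll e^{-u^2/c}$. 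For the last probability, $\mathscr{S}_j$ ($j\leq t_\ell$) is a mean--zero random walk with increments of variance $\tfrac12+o(1)$, and we need the probability it stays inside the corridor $\big(m(j)+L_y(j),\,m(j)+U_y(j)\big)$ for all $j\leq t_\ell$ and lands in $(v,v+1]$ at $j=t_\ell$; as the lower barrier $m(j)+L_y(j)$ is far below the typical constrained path, only the upper barrier is effective, and a ballot theorem (as in \cite{FHK1}) gives
\[
\P{B_\ell\cap C_\ell,\ \mathscr{S}_{t_\ell}(h)\in(v,v+1]}\ll\frac{y\,\big(U_y(t_\ell)-v+m(t_\ell)+2\big)}{t_\ell}\cdot\frac{e^{-v^2/t_\ell}}{\sqrt{t_\ell}},
\]
the factor $y$ being the gap to the barrier at the time $\asymp y$ where $U_y$ first becomes finite, $U_y(t_\ell)-v+m(t_\ell)+2$ the gap at time $t_\ell$, and $e^{-v^2/t_\ell}/\sqrt{t_\ell}$ the Gaussian density of $\mathscr{S}_{t_\ell}$ (variance $t_\ell/2$) at $v$. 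Since $t_\ell\asymp t(1+\theta)\asymp t$, one may replace $t_\ell^{-3/2}$ by $t^{-3/2}$; multiplying the four factors yields the claim.

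\textbf{Main obstacle.} The delicate point is the reduction step: the shift by $t_\theta$ makes every Dirichlet polynomial longer, so one must verify that (\ref{finalincrement}) still keeps everything within the range $T^{1/100}$ where the twisted fourth moment is available and the resulting error terms stay negligible next to $g(y,\theta)e^{-u^2/c}$, and that the partial mollification up to $\exp(e^{k+t_\theta})$ produces precisely the constant $e^{4(t(1+\theta)-k)}$ rather than $(\log T)^{O(1)}$ times it. The decoupling of $E^\theta(u)$ is the genuinely new ingredient relative to \cite{FHK1}, but once the three prime blocks are separated it is exactly the independence argument above --- the same mechanism used in Lemma~\ref{basedecoupling}, which already extracts this $e^{-u^2/c}$ factor.
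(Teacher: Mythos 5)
Your overall architecture — twisted fourth moment to remove $\zeta$, decoupling across prime blocks, ballot theorem for the walk, Lemma~\ref{MV} for $E^\theta(u)$ — is the same as the paper's, and you correctly flag the reduction step as the delicate point. However, there are two concrete problems in how you describe that step.

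First, the claim that ``the truncated mollifiers have $O(1)$ fourth moments thanks to their $\Omega$--truncations, so $\E{|\mathcal{M}_\ell^{(k)}|^4|\mathcal{Q}_\ell^{(k)}|^2}\ll\E{|\mathcal{Q}_\ell^{(k)}|^2}$'' is false: $\mathcal{M}_\ell^{(k)}$ mollifies $e^{-(\mathscr{S}_k-\mathscr{S}_{t_\ell})}$, whose fourth moment is of size $e^{4(k-t_\ell)}$ (and for $\ell=1$, $k$ near $t_2$, this is $t^{O(1)}$, not $O(1)$). More importantly, the intermediate expression $\E{|\mathcal{R}|^4\,|\mathcal{M}_\ell^{(k)}|^4|\mathcal{Q}_\ell^{(k)}|^2\cdot\mathbf{1}(\cdots)}$ double-counts: once the full mollifier product $\mathcal{M}_{-1}\cdots\mathcal{M}_\ell\mathcal{M}_\ell^{(k)}$ has been paired against $\zeta_\tau$, what survives is a Dirichlet polynomial supported only on primes $>\exp(e^{k+t_\theta})$, with \emph{no} leftover $\mathcal{M}_\ell^{(k)}$ factor. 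Similarly, your independence argument neglects the fact that $\mathcal{M}_0,\dots,\mathcal{M}_\ell$ are supported on exactly the same primes as $\mathscr{S}_j$, $j\leq t_\ell$, so they cannot be factored apart from $B_\ell\cap C_\ell\cap\{\mathscr{S}_{t_\ell}\in(v,v+1]\}$ — the resolution is that they are absorbed entirely, never to return.

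The paper's actual mechanism avoids both problems: it first bounds the indicator $\mathbf{1}(h\in B_\ell\cap C_\ell\cap E^\theta(u),\ \mathscr{S}_{t_\ell}\in(v,v+1])$ by a sum of squared Dirichlet polynomials $|\mathcal{D}_{1,A}(S_{t_\theta}(0)-S_{t_\theta}(h)-u_0)|^2\prod_j|\mathcal{D}_{\Delta_j,A}(\mathscr{Y}_j(h)-u_j)|^2$ (Lemma~\ref{indicatordirichlet}); the resulting product with $\mathcal{Q}_\ell^{(k)}$ is \emph{degree-$k$ well-factorable}, which is precisely what Lemma~\ref{twisted} (Lemma 9 of \cite{FHK1}) needs. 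Applying that lemma collapses $|\zeta_\tau\mathcal{M}_{-1}\cdots\mathcal{M}_\ell\mathcal{M}_\ell^{(k)}|^4$ in one stroke to $e^{4(t(1+\theta)-k)}$ times $\E{|\mathcal{Q}|^2}$, and only then is the remaining $\E{|\mathcal{Q}|^2}$ split (Lemma~\ref{splittinglemma}), compared to a Gaussian model (Lemma~\ref{gaussiancomparison}), and estimated via the ballot theorem and Lemma~\ref{MV}. Your heuristic picture of ``reducing to the i.i.d.\ $(Z_p)$ model and factoring'' is the right intuition for the final answer, but as written it skips the indicator approximation (the device that rigorously achieves the decoupling) and imports nonexistent independence for the earlier mollifiers.
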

}
We are now ready to prove Proposition \ref{induction}. To begin with, a union bound gives
\begin{align*}
    &\P{\exists h \in H^\theta (y)\cap G_\ell\cap E^\theta(u)} \\
    &\leq \P{\exists h \in H^\theta (y)\cap G_\ell \cap G_{\ell+1}^c\cap E^\theta(u)} + \P{\exists h \in H^\theta (y)\cap G_\ell\cap G_{\ell+1}\cap E^\theta(u)} 
\end{align*}
and the first term on the right-hand side is bounded above by
\begin{align}
    &\P{\exists h\in A_{\ell+1}^c\cap H^\theta(y)\cap G_\ell\cap E^\theta(u)} \label{increments} \\
    &+ \P{\exists h \in D_{\ell+1}^c\cap A_{\ell+1}\cap H^{\theta}(y)\cap G_\ell\cap E^\theta(u)} \label{mollifiers}\\
    &+\P{\exists h \in C_{\ell+1}^c\cap D_{\ell+1}\cap A_{\ell+1}\cap H^\theta (y)\cap G_\ell\cap E^\theta(u)} \label{lowerext}\\
    &+\P{\exists h\in B_{\ell+1}^c\cap C_{\ell+1}\cap A_{\ell+1}\cap H^\theta(y)\cap G_\ell\cap E^\theta(u)}\label{upperext}.
\end{align}
We will show that each of these probabilities is $$\ll e^{-u^2/c}\frac{g(y,\theta)}{\big(\log_{\ell+1}(t-t_\theta)\big)^{100}}.$$
(The dominant term is \eqref{upperext}.)
\subsubsection{Bound on (\ref{increments})}

By definition of $A_{\ell+1}^c$, if there is a $k\in(t_\ell, t_{\ell+1}]$ and an $h$ such that $|\widetilde{\mathscr{S}}_k(h)-\widetilde{\mathscr{S}}_{t_\ell}(h)|>10^3((t-t_\theta)-t_\ell)$, then 
\[
    \sum_{k\in(t_\ell,t_{\ell+1}]} \max_{|h|\leq \log^\theta T} \frac{|\widetilde{\mathscr{S}}_k(h)-\widetilde{\mathscr{S}}_{t_\ell}(h)|^{2q}}{(10^3((t-t_\theta)-t_\ell))^{2q}} \geq 1, \quad \text{for all $q\geq 1$.}
\]
This implies the bound
\begin{align}\label{Ainduction}
    &\P{\exists h \in A_{\ell+1}^c \cap G_\ell \cap E^\theta(u)} \nonumber \\ &\leq \sum_{k\in(t_\ell,t_{\ell+1}]} \E{\max_{|h|\leq \log^\theta T} \frac{|\widetilde{\mathscr{S}}_k(h)-\widetilde{\mathscr{S}}_{t_\ell}(h)|^{2q}}{(10^3((t-t_\theta)-t_\ell))^{2q}}\mathbf{1}{\big(h\in G_\ell\cap E^\theta(u)\big)}}.
\end{align}
Picking $q = \lfloor 10^6((t-t_\theta)-t_\ell)^2/(k-t_\ell)\rfloor$ in Equation (\ref{Ainduction}), we note that $|\widetilde{\mathscr{S}}_k(h)-\widetilde{\mathscr{S}}_{t_\ell}(h)|^{2q}$ is a Dirichlet polynomial of length $\leq \exp(2qe^{k+t_\theta}) \leq \exp(2\cdot10^6((t-t_\theta)-t_\ell)^2e^{t_{\ell+1}+t_\theta})\leq \exp(e^{t}/100)$ (by the assumption that $\ell$ satisfies (\ref{finalincrement})). We can therefore bound each summand using Lemma \ref{lem3} to get
\[
    \ll e^{-u^2/c}g(y,\theta)\sum_{k\in (t_\ell, t_{\ell+1}]} \big(q+((t-t_\theta)-t_\ell)^{800}\big)e^{100((t-t_\theta)-t_\ell)}\E{\frac{|\widetilde{\mathscr{S}}_k(h)-\widetilde{\mathscr{S}}_{t_\ell}(h)|^{2q}}{\big(10^3((t-t_\theta)-t_\ell)\big)^{2q}}}.
\]
{For our choice of $q$, we can apply a Gaussian moment estimate for the rightmost term (Lemma \ref{complexgaussian}) along with Stirling's approximation as in the proof of Corollary \ref{cor:tails} to get
\begin{align*}
    \E{\frac{|\widetilde{\mathscr{S}}_k(h)-\widetilde{\mathscr{S}}_{t_\ell}(h)|^{2q}}{\big(10^3((t-t_\theta)-t_\ell)\big)^{2q}}} &\ll \sqrt{q}e^{-q}\bigg(\frac{q(k-t_\ell)}{10^3\big((t-t_\theta)-t_\ell\big)^2}\bigg)^q \\
    &\ll \sqrt{q}e^{-10^6((t-t_\theta)-t_\ell)^2/(k-t_\ell)}\\
    &\ll \sqrt{q}e^{-10^6((t-t_\theta)-t_\ell)},
\end{align*}
uniformly in $k\in(t_\ell,t_{\ell+1}]$. Plugging this back into the sum, we have
\[
    \ll e^{-u^2/c}g(y,\theta)\sum_{k\in(t_\ell,t_{\ell+1}]} \sqrt{q}(q+((t-t_\theta)-t_\ell))^{800} e^{(100-10^6)((t-t_\theta)-t_\ell)},
\]
and summing over $k$ then gives the desired bound:
\[
    \ll g(y,\theta)e^{-u^2/c} \big(\log_{\ell-1}(t-t_\theta)\big)^{-1}.
\]}
\subsubsection{Bound on (\ref{mollifiers})}\label{boundwithmollifiers}
{
For any $h\in A_{\ell+1}\cap D_\ell$ we have 
\begin{align}\label{eq:ind_moll}
    |\zeta_\tau e^{-S_{k+t_\theta}}(h)|&=
    |\zeta_\tau e^{-{S}_{t_\theta+t_\ell}}(h)|e^{-(\mathscr{S}_k-\mathscr{S}_{t_\ell})(h)} \nonumber\\
    &\leq c_\ell |\zeta_\tau \mathcal{M}_{-1}\cdots \mathcal{M}_\ell(h)|e^{-(\mathscr{S}_k-\mathscr{S}_{t_\ell})(h)}+e^{-10^3((t-t_\theta)-t_{\ell-1})},
\end{align}
using the definition of $D_{\ell}$ and $A_{\ell+1}$ to bound $|\zeta_\tau e^{-\mathscr{S}_{t_\ell}}(h)|$ and $e^{-(\mathscr{S}_k-\mathscr{S}_{t_\ell})(h)} $ respectively. To bound \eqref{mollifiers}, it then suffices to show that the following estimates hold with sufficiently high probability for each $h\in A_{\ell+1}\cap D_\ell$ and $k\in(t_\ell,t_{\ell+1}]$:
\begin{equation}\label{lmoll}
    |(\zeta_\tau \mathcal{M}_{-1}\cdots \mathcal{M}_{\ell})(h)|<e^{10^3((t-t_\theta)-t_\ell)},
\end{equation}
\begin{equation}\label{incmoll}
    |e^{-(\mathscr{S}_k-\mathscr{S}_{t_\ell})}(h)|\leq (1+e^{-t_\ell})|\mathcal{M}_\ell^{(k)}(h)|+e^{-10^5(t_{\ell+1}-t_\ell)},
\end{equation}
since applying them in succession to the right-hand side of \eqref{eq:ind_moll} implies that
\[
    |\zeta_\tau e^{-S_k+t_\theta}(h)|\leq c_{\ell+1}|\zeta_\tau \mathcal{M}_{-1}\cdots \mathcal{M}_\ell\mathcal{M}_\ell^{(k)}(h)|+e^{-10^4((t-t_\theta)-t_\ell)}.
\]
}

By Lemma \ref{mollifying}, Equation \eqref{incmoll} holds pointwise for each $h\in A_{\ell+1}$. As for \eqref{lmoll}, 
{a union bound on $h$ bounds the probability of its complement by}
\begin{align*}
    &\ll\sum_{h\in \mathcal{T}_t^\theta} \P{ |(\zeta_\tau \mathcal{M}_{-1}\cdots \mathcal{M}_\ell)(h)|\geq e^{10^3((t-t_\theta)-t_\ell)}, h\in G_\ell\cap E^{\theta}(u)} \\
    &\ll e^{-4\cdot 10^3((t-t_\theta)-t_\ell)}e^{t-t_\theta}\E{|(\zeta_\tau \mathcal{M}_{-1}\cdots \mathcal{M}_\ell)(0)|^4\cdot \mathbf{1}\big(0\in G_\ell\cap E^\theta(u)\big)}.
\end{align*}
Using Lemma \ref{lem4} with $\mathcal{Q}\equiv 1$ and $k=t_\ell$ to bound this expectation, we find that this is
\begin{equation}\label{2.5end}
  e^{-u^2/c}g(y,\theta) e^{-4\cdot 10^3((t-t_\theta)-t_\ell)} e^{100((t-t_\theta)-t_\ell)}\ll e^{-u^2/c}\frac{g(y,\theta) }{\log_{\ell-1} (t-t_\theta)}.
\end{equation}

\subsubsection{Bound on (\ref{lowerext})}\label{lowerextsection}

By definition, for $h\in C_{\ell+1}^c$, there must be a $k$ such that $\mathscr{S}_k(h)\leq m(k)-20((t-t_\theta)-k)+y$. We split $\mathscr{S}_k(h)$ according to the value of $\mathscr{S}_{t_\ell}\in[v,v+1]$ and $(\mathscr{S}_{k}-\mathscr{S}_{t_\ell})(h)\in[w,w+1]$, where $v,w\in \mathbb{Z}$, $|w|\leq 10^3((t-t_\theta)-t_\ell)$ and $w+v\leq m(k)-20((t-t_\theta)-k)+y$. 
Since $h\in H^\theta(y)$, we also have that
\[  {
    |\zeta_\tau e^{-S_{k+t_\theta}}(h)|>V_{\theta} e^{-w-v},}
\]
where $V_\theta = e^ye^{(t-t_\theta)}(t-t_\theta)^{-3/4}$, and since $h\in D_{\ell+1}$, this implies that either
\[
    |\zeta_\tau \mathcal{M}_{-1}\cdots \mathcal{M}_\ell^{(k)} (h)|\gg V_\theta e^{-w-v}
\]
or $\tfrac{1}{2}V_\theta e^{-w-u}\leq e^{-10^4(t-t_\theta-t_\ell)}$. However, the latter would force $\mathscr{S}_{t_\ell}(h)>m(t_\ell)+U_y(t_\ell)$ for large enough $t$, contradicting the fact that $h\in G_\ell$. A union bound on $h\in \mathcal{T}_t^\theta$ and $k\in (t_\ell,t_{t_\ell+1}]$ therefore yields
\begin{align}\label{3.4bound}
    &\mathbb{P}\big\{\exists h \in H^\theta(y)\cap C_{\ell+1}^c\cap D_{\ell+1}\cap A_{\ell+1}\cap G_\ell\cap E^\theta(u) \big\} \nonumber\\
    \ll &\sum_{\substack{k\in(t_\ell,t_{\ell+1}] \\ h\in \mathcal{T}_t^\theta  }} \sum_{\substack{w+v\leq m(k)+L_y(k) \\ |w|\leq 10^3(t_{\ell+1}-t_\ell) \\ L_y(t_\ell)\leq u-m(t_\ell)\leq U_y(t_\ell)}} \frac{e^{4w+4v}}{V_\theta^4} \mathbb{E}\bigg\{|(\zeta_\tau \mathcal{M}_{-1}\cdots \mathcal{M}_\ell^{(k)}(h)|^4\cdot \frac{|(\mathscr{S}_k-\mathscr{S}_{t_\ell})(h)|^{2q}}{(1+w^{2q})} \nonumber \\
    &\quad\quad\quad\quad\quad\quad\quad\quad\quad\quad\quad\quad\cdot
    \mathbf{1}\Big(\mathscr{S}_{t_\ell}(h) \in [u,u+1] \text{ and } h\in A_\ell\cap B_\ell \cap C_\ell\cap E^{\theta}(u)\Big)\bigg\}.
\end{align}

Picking $q=\lfloor w^2/(k-t_\ell)\rfloor$ and applying Lemma \ref{lem4} with the Dirichlet polynomial $\mathcal{Q}=(\mathscr{S}_k-\mathscr{S}_{t_\ell})^{2q}$, we find that the above is
\begin{align}\label{eq:bigsum1}
    \ll e^{-u^2/c}\hspace{-25px}\sum_{\substack{k\in(t_\ell,t_{\ell+1}]  \\ w+v\leq m(k)-20((t-t_\theta)-k)+y \\ |w|\leq 10^3((t-t_\theta)-t_\ell) \\ L_y(t_\ell)\leq v-m(t_\ell)\leq U_y(t_\ell)}}\hspace{-25px} \frac{e^{4w+4v}}{V_\theta^4} &e^{5(t-t_\theta)-4k} e^{-\frac{w^2}{(k-t_\ell)}} \frac{y(U_y(t_\ell)-v+m(t_\ell)+2)e^{-v^2/t_\ell}}{t_\ell^{3/2}}.
\end{align}
It is left to show that this summation is $\ll g(y,\theta)(\log_\ell (t-t_\theta))^{-1}$. 
{
    The restriction $L_y(t_\ell)\leq v-m(t_\ell)\leq U_y(t_\ell)$ implies that 
    \[
       0\leq U_{y}(t_\ell)-v+m(t_\ell) \leq U_y(t_\ell)-L_y(t_\ell) \ll \log_\ell (t-t_\theta)
    \]
    for all $y$. Using this in \eqref{eq:bigsum1} and removing the restriction on $v$ in that sum shows that it is
\begin{align*}
    \ll ye^{-4y-u^2/c}\hspace{-25px}\sum_{\substack{k\in(t_\ell,t_{\ell+1}] \\ w+v\leq m(k)-20((t-t_\theta)-k)+y \\ |w|\leq 10^3((t-t_\theta)-t_\ell)}}\hspace{-25px} (t-t_\theta)^3\log_\ell (t-t_\theta)\frac{e^{-v^2/t_\ell}}{t_\ell^{3/2}}{e^{4w+4v}} &e^{(t-t_\theta)-4k}e^{-{w^2}/{(k-t_\ell)}},
\end{align*}
recalling that $V_\theta=e^ye^{t-t_\theta}(t-t_\theta)^{-3/4}$. Making the change of variables $\bar{v}=v-m(t_\ell)-y, \bar{w}=w-m(k-t_\ell)$ and dropping the condition on $|w|$, this becomes
\begin{align}\label{eq:bigsum2}
        \ll ye^{-u^2/c}\hspace{-25px}\sum_{\substack{k\in(t_\ell,t_{\ell+1}] \\ \bar{w}+\bar{v}\leq -20((t-t_\theta)-k)\\\bar{w}\in \mathbb{Z}}}\hspace{-25px}{(t-t_\theta)^3}\log_\ell (t-t_\theta)e^{4m(k)+(t-t_\theta)-4k}{e^{4(\bar{w}+\bar{v})}}\frac{e^{-\frac{(\bar{v}+m(t_\ell)+y)^2}{t_\ell}}}{t_\ell^{3/2}}e^{-\frac{(\bar{w}+m(k-t_\ell))^2}{(k-t_\ell)}}.
\end{align}
To estimate this, we begin by expanding the Gaussian term in $\bar{v}$ in the above to get
\[
    t_{\ell}^{-3/2}e^{-(\bar{v}+m(t_\ell)+y)^2/t_\ell}\ll t_\ell^{3/2} e^{-y^2/t_\ell-2ym(t_\ell)/t_\ell-2\bar{v}(y/t_\ell)-(m(t_\ell)+\bar{v})^2/t_\ell}.
\]
Since $y\ll (t-t_\theta)/\log(t-t_\theta)$ by assumption, $e^{-2ym(t_\ell)/t_\ell} \ll e^{-2y}$, and $y/t_\ell=o(1)$. For our purposes, we will view $2y/t_\ell$ as a small constant which we henceforth denote by $\delta$.  Using the bound
\[
    \frac{e^{-(m(t_\ell)+\bar{v})^2/t_\ell}}{t_\ell^{3/2}}\leq e^{-t_\ell}e^{-2\bar{v}},
\]
we conclude that  
\[
    t_{\ell}^{-3/2}e^{-(\bar{v}+m(t_\ell)+y)^2/t_\ell} \ll e^{-t_\ell}e^{-y^2/t_\ell-2y-(2+\delta)\bar{v}}.
\]
Plugging this back into \eqref{eq:bigsum2} yields
\begin{align*}
        &\ll g(y,\theta)e^{-u^2/c}\\
        &\hspace{10px}\times\hspace{-15px}\sum_{\substack{k\in(t_\ell,t_{\ell+1}] \\ \bar{w}+\bar{v}\leq -20((t-t_\theta)-k)\\\bar{w}\in \mathbb{Z}}}\hspace{-25px}{(t-t_\theta)^{3}}\log_\ell (t-t_\theta)e^{4m(k)+(t-t_\theta)-4k-t_\ell}{e^{4\bar{w}+(2-\delta)\bar{v}}}e^{-\frac{(\bar{w}+m(k-t_\ell))^2}{(k-t_\ell)}}.\nonumber
\end{align*}
We perform the sum over $\bar{v}$ to get
\begin{align*}
        &\ll g(y,\theta)e^{-u^2/c}\log_\ell (t-t_\theta)\\
        &\times \sum_{\substack{k\in(t_\ell,t_{\ell+1}] \\\bar{w}\in \mathbb{Z}}}{(t-t_\theta)^{3}}e^{4m(k)+(t-t_\theta)-4k-t_\ell-20(2-\delta)((t-t_\theta)-k)}{e^{(2+\delta)\bar{w}}}e^{-\frac{(\bar{w}+m(k-t_\ell))^2}{(k-t_\ell)}},\nonumber
\end{align*}
followed by the sum over $\bar{w}$, yielding
\begin{align*}
            &\ll g(y,\theta)e^{-u^2/c}\sum_{\substack{k\in(t_\ell,t_{\ell+1}]}}\log_\ell (t-t_\theta)e^{(t-t_\theta)-t_\ell}(k-t_\ell)^{1/2}\\
            &\hspace{50px}\times{(t-t_\theta)^{3}}e^{4m(k)-(2+\delta)m(k-t_\ell)-4k-20(2-\delta)((t-t_\theta)-k)}e^{\tfrac{(2+\delta)^2}{4}(k-t_\ell)}.\nonumber
\end{align*}
Using the fact that $(k-t_\ell)\ll\log_\ell(t-t_\theta)$ along with the definition of $m(k)$ and  $m(k-t_\ell)$, this is
\begin{align*}
                &\ll g(y,\theta)e^{-u^2/c}\sum_{\substack{k\in(t_\ell,t_{\ell+1}]}}(\log_\ell (t-t_\theta))^{3/2}\cdot(t-t_\theta)^{3-3\Big(\tfrac{k}{t-t_\theta}-\tfrac{2+\delta}{4}\Big(1-\tfrac{k}{t-t_\theta}\Big)\Big)}\\
            &\hspace{150px}\times e^{(t-t_\theta)-t_\ell}e^{(k-t_\ell)((2+\delta)^2/4-(2+\delta))}e^{-20(2-\delta)((t-t_\theta)-k)}\\
            &\ll g(y,\theta)e^{-u^2/c}\sum_{\substack{k\in(t_\ell,t_{\ell+1}]}}\log_\ell (t-t_\theta)^{3/2}\cdot(t-t_\theta)^{3-3\Big(\tfrac{k}{t-t_\theta}-\tfrac{2+\delta}{4}\Big(1-\tfrac{k}{t-t_\theta}\Big)\Big)}\\
            &\hspace{300px}\times e^{-19((t-t_\theta)-k)}.
\end{align*}
If $\ell\geq 1$, then $k/(t-t_\theta)\sim 1$ and the powers of $(t-t_\theta)$ in the above cancel out to give a factor of the form $(t-t_\theta)^{c \log_{\ell} (t-t_\theta)/(t-t_\theta)}$ for a constant $c>0$, which is bounded in $t$. Otherwise, we can use a power of $e^{-((t-t_\theta)-k)}$ to offset the resulting positive power of $(t-t_\theta)$, recalling that $\log_0 (t-t_\theta):=(t-t_\theta)$. It follows that the above is 
\[
    \ll g(y,\theta)e^{-u^2/c}\sum_{\substack{k\in(t_\ell,t_{\ell+1}]}}(\log_\ell(t-t_\theta))^{3/2} e^{-10((t-t_\theta)-k)} \ll g(y,\theta)e^{-u^2/c}(\log_\ell (t-t_\theta))^{-1}.
\]
}
\subsubsection{Bound on (\ref{upperext})}\label{proofupperext} We show that
\[
    \P{\exists h\in (B_\ell\setminus B_{\ell+1})\cap C_{\ell+1}\cap E^\theta(u)} \ll \frac{g(y,\theta)}{\big(\log_{\ell+1}(t-t_\theta)\big)^{100}}{e^{-u^2/c}}.
\]
Using a union bound over $k\in [t_\ell,t_{\ell+1})$ and summing over all possible values $\mathscr{S}_k(h)\in[w,w+1], w\in \mathbb{Z}$, we bound the above by 
\begin{align*}
    \sum_{\substack{k\in[t_\ell,t_{\ell+1})\\ w\in[L_y(k),U_y(k)]}} \mathbb{P}\Big\{\exists h : \overline{\mathscr{S}}_j(h)<U_y(j)\, 
    \forall j\leq k,\, \overline{\mathscr{S}}_{k+1}(h)>U_y(k+1),\\ \overline{\mathscr{S}}_k(h)\in (w,w+1],\, h\in E^\theta(u)\Big\}
\end{align*}
where $\overline{\mathscr{S}}_k:=\mathscr{S}_k-m(k)$. Letting $V_{w,k}=U_y(k+1)-w$, using the fact that
\[
    \mathscr{S}_{k+1}-\mathscr{S}_k > U_y(k+1)+m(k+1)-m(k)-\overline{\mathscr{S}}_k > U_y(k+1)-w+o\Big(\frac{\log (t-t_\theta)}{t-t_\theta}\Big)
\]
and applying Markov's inequality shows that this sum is
\begin{align}\label{upperextmain}
    \ll \sum_{\substack{k\in[t_\ell, t_{\ell+1})\\ w\in [L_y(k),U_y(k)]}} &\mathbb{E}\Big\{\max_{|h|\leq \log^\theta T} \frac{|(\mathscr{S}_{k+1}-\mathscr{S}_k+1)(h)|^{2q}}{(V_{w,k}+1)^{2q}} \\ &*\mathbf{1}\left(\overline{\mathscr{S}}_j(h)<U_y(j), \forall j\leq k, \overline{\mathscr{S}}_k(h)\in(w,w+1],\, h\in E^\theta(u)\right)\Big\}.
\end{align}
Picking $q=\lfloor(V_{w,k}+1)^2/10\rfloor$, which is bounded by $ 400((t-t_\theta)-k)^2$ for $w\geq L_y(k)$, we note that the Dirichlet polynomial $(\mathscr{S}_{k+1}-\mathscr{S}_k+1)^{2q}$ has length $\leq \exp(1000((t-t_\theta)-k)^2e^{k+1+t_\theta})$. We can therefore apply Lemma \ref{lem3} to each summand, yielding
\begin{align}\label{eq:sumupper}
    \ll e^{-u^2/c}&\sum_{\substack{k\in[t_\ell,t_{\ell+1})\\ w\in [L_y(k),U_y(k)]}}e^{k}\big((t-t_\theta)-k\big)^{800} \\ &\frac{\E{|(\mathscr{S}_{k+1}-\mathscr{S}_k+1)(0)|^{2q}}}{(V_{w,k}+1)^{2q}} \frac{y(U_y(k)-w+1)e^{-(w+m(k))^2/k}}{k^{3/2}}.
\end{align}
{By Equation \eqref{eq:gaussianmomentshifted}, the expectation is $\ll (2q)!/q!+4^q$, which by Stirling's approximation is $\ll 100^q(q/e)^q$. For our choice of $q$, it then follows that
\[
    \frac{\E{|(\mathscr{S}_{k+1}-\mathscr{S}_k+1)(0)|^{2q}}}{(V_{w,k}+1)^{2q}}\ll e^{-(V_{w,k}+1)^2/10}, 
\]
and plugging this back into \eqref{eq:sumupper} yields
\begin{align*}
    \ll ye^{-u^2/c} \sum_{\substack{k\in[t_\ell,t_{\ell+1})\\ w\in [L_y(k),U_y(k)]}}e^{k}\big((t-t_\theta)-k\big)^{800} e^{-(V_{w,k}+1)^2/10} \frac{(U_y(k)-w+1)e^{-(w+m(k))^2/k}}{k^{3/2}}.
\end{align*}
To estimate this sum, we first make the change of variables $\bar{w}=w-y$ to get
\begin{align}\label{eq:sumupper2}
\ll ye^{-u^2/c} \hspace{-10px}\sum_{\substack{k\in[t_\ell,t_{\ell+1})\\ \bar{w}\in [L(k), U(k)]}}\hspace{-10px}e^{k}\big((t-t_\theta)-k\big)^{800} e^{-(U(k+1)-\bar{w}+1)^2/10} \frac{(U(k)-\bar{w}+1)e^{-(\bar{w}+y+m(k))^2/k}}{k^{3/2}}
\end{align}
where $U(k):=U_y(k)-y$ and similarly for $L(k)$. Expanding the rightmost Gaussian term yields 
\begin{align*}
    \frac{e^{-(\bar{w}+y+m(k))^2/k}}{{k^{3/2}}}&=e^{-y^2/k-2y}e^{-2\bar{w}(y/k)}\frac{e^{-(\bar{w}+m(k))^2/k}}{k^{3/2}}\\
    &\ll e^{-2y-y^2/(t-t_\theta)}e^{-\delta\bar{w}}e^{-k-2\bar{w}},
\end{align*}
where $\delta:=(y/k)=o(1)$ can be seen as a small constant. Using this estimate in \eqref{eq:sumupper2}, and multiplying the resulting sum by $e^{(2+\delta)(U(k+1)-1)-(2+\delta)U(k)}$ (which is $O(1)$) yields
\begin{align*}
   \ll g(y,\theta)e^{-u^2/c} \sum_{k\in[t_\ell,t_{\ell+1})}&\big((t-t_\theta)-k\big)^{800}e^{-(2+\delta)U(k)}\\ 
   &\times\sum_{\bar{w}} {(U(k)-\bar{w}+1)}e^{-(U(k+1)-\bar{w}+1)^2/10-(2+\delta)(U(k+1)-\bar{w}+1)}.\nonumber
\end{align*}
Notice that the sum over $\bar{w}$ is now finite, while the sum over $k$ is 
\[
    \ll \sum_{k\in [t_\ell,t_\ell+1)} \big((t-t_\theta)-k\big)^{800}e^{-10^3\log((t-t_\theta)-k)} \ll \big(\log_{\ell+1}(t-t_\theta)\big)^{-100},
\]
which yields the desired bound.
}

\subsection{Proof of Proposition \ref{last}} 
If $h\in H^\theta(y)\cap G_\mathcal{L}$ and $\mathscr{S}_{t_\mathcal{L}}\in[v,v+1]$, then $|v-y-m(t_\mathcal{L})|\leq 20((t-t_\theta)-t_\mathcal{L})$ and $|(\zeta_\tau e^{-{S_{t_\mathcal{L}+t_\theta}}})(h)|>V_\theta e^{-v}$ where $V_\theta=e^{y}e^{(t-t_\theta)}(t-t_\theta)^{-3/4}$. {A union bound on $h\in \mathcal{T}_t^\theta$ thus gives
\begin{align}\label{4.5sum}
        &\P{\exists h\in H^\theta(y)\cap G_\mathcal{L}\cap E^\theta(u)} \\ \nonumber
        &\ll e^{(t-t_\theta)}\sum_{\substack{|v-y-m(t_\mathcal{L})| \\
    \,\,\leq 20((t-t_\theta)-t_\mathcal{L})}} \P{|\zeta_\tau e^{-S_{t_\mathcal{L}+t_\theta}}(h)|>V_\theta e^{-v},\, \mathscr{S}_{t_\mathcal{L}}\in[v,v+1],\, h\in G_\mathcal{L} \cap E^\theta(u)}.
\end{align}
Noting that $V_\theta e^{-v}>e^{-10^4((t-t_\theta)-t_\mathcal{L})}$ and that
\[
|\zeta_\tau e^{-{S}_{k+t_\theta}}(h)|\leq c_\mathcal{L}|\zeta_\tau \mathcal{M}_{-1}\cdots \mathcal{M}_{\mathcal{L}-1}^{(k)}(h)|+e^{-10^4((t-t_\theta)-t_{\mathcal{L}-1})}
\]
since $h\in D_\mathcal{L}$, it follows that
\[
    V_\theta e^{-v} \ll |(\zeta_\tau \mathcal{M}_{-1}\cdots \mathcal{M}_\mathcal{L})(h)|.
\]
We can therefore bound each summand in (\ref{4.5sum}) using Markov's inequality and Lemma \ref{lem4}
\begin{align*}
    \mathbb{P}&\big\{\exists h\in H^\theta(y)\cap G_\mathcal{L}\cap E^\theta(u)\big\} \\
    &\ll e^{-4y-4(t-t_\theta)}(t-t_\theta)^3e^{(t-t_\theta)} e^{-u^2/c}\hspace{-10px}\sum_{\substack{|v-y-m(t_\mathcal{L})| \\
    \,\,\leq 20((t-t_\theta)-t_\mathcal{L})}}\hspace{-10px} e^{4v} e^{4((t-t_\theta)-t_\mathcal{L})}y((t-t_\theta)-t_\mathcal{L})\frac{e^{-v^2/t_\mathcal{L}}}{t_\mathcal{L}^{3/2}}.\\
\end{align*}
Recall that $(t-t_\theta)-\mathcal{L}=O(1)$ by definition of $\mathcal{L}$ (cf.~ Equation \eqref{finalincrement}). It follows that the above is 
\[
    \ll ye^{-4y-4(t-t_\theta)}(t-t_\theta)^3e^{(t-t_\theta)} e^{-u^2/c}\hspace{-10px}\sum_{\substack{|v-y-m(t_\mathcal{L})| \\
    \,\,\leq 20((t-t_\theta)-t_\mathcal{L})}}\hspace{-10px} \frac{e^{4v-v^2/t_\mathcal{L}}}{t_\mathcal{L}^{3/2}},
\]
and that this sum contains $O(1)$ many terms, each satisfying
\[
    \ll e^{4y+4m(t_\mathcal{L})-y^2/(t-t_\theta)-2y-t_\mathcal{L}}.
\]
We conclude that
\[
    \mathbb{P}\big\{\exists h\in H^\theta(y)\cap G_\mathcal{L}\cap E^\theta(u)\big\} \ll g(y,\theta)e^{-u^2/c}\big((t-t_\theta)^3e^{4m(t_\mathcal{L})-4t_\mathcal{L}}\big) \ll g(y,\theta)e^{-u^2/c}
\]
since $\big((t-t_\theta)^3e^{4m(t_\mathcal{L})-4t_\mathcal{L}}\big)=O(1)$.
}

\section{Auxiliary results in Section 4}\label{sectionaux}
\subsection{Proof of Lemma \ref{lem3}}
\noindent We begin with the proof of Lemma \ref{lem3} and omit that of \ref{basedecoupling}, which is similar and much simpler. We follow the strategy in \cite[Section 7.2]{FHK1}, where the main idea is to approximate the indicator appearing in \eqref{eq:4.8} using Dirichlet polynomials. We first express it as a product of indicators involving the increments
$$
\mathscr{Y}_r:=\mathscr{S}_r,\quad \mathscr{Y}_j:=\mathscr{S}_j-\mathscr{S}_{j-1}, \quad j>r:=\lceil y/4\rceil
$$
(noting the slight abuse of notation). Approximating each of these by a Dirichlet polynomial then allows us to use Lemma \ref{splittinglemma} to decouple their second moments. This eventually leads to a Gaussian comparison argument and an application of the ballot theorem in Proposition \ref{prop:ballot}.

To express the restrictions $\mathscr S_j\in [m(j)+L_y(j),m(j)+U_y(j)]$, $r\leq j\leq k$, and $\mathscr S_k\in (w,w+1]$ in terms of the increments, we partition the range of values of each $\mathscr{Y}_j$ into intervals of the form $[u_j,u_j+\Delta_j^{-1}]$ where $u_j\in \Delta_j^{-1}\mathbb{Z}$ for a discretization parameter
\begin{equation}
\Delta_j=\min(j,(t-t_\theta)-j)^4.
\end{equation}
Note that $\sum_j \Delta_j^{-1}\leq 1$.
Define the set $\mathcal{I}_{k}(w)\subseteq \mathbb{R}^{k-r+1}$ of tuples $\mathbf u=(u_{r},\cdots,u_k)$ with $u_j\in\Delta^{-1}_j\mathbb{Z}$, $r\leq j\leq k$, such that
\begin{align*}
     &L_y(j)-1\leq \sum_{i=r}^ju_i- m(j)\leq U_y(j)+1,\quad \forall r\leq j\leq k,\\
     &\bigg|\sum_{i=r}^ku_i-w\bigg|\leq 1.
\end{align*}
With this definition, one readily checks the inclusion
\begin{equation}
\label{eqn: inclusion}
\begin{aligned}
&\Big\{\mathscr S_j(h)-m(j)\in [L_y(j),U_y(j)], r\leq j\leq k, \mathscr S_k(h)\in (w,w+1]\Big\}\\
&\subseteq \bigcup_{\mathbf u\in \mathcal{I}_{k}(w)} \Big\{\mathscr Y_j(h)\in [u_j,u_j+\Delta_j^{-1}), \mathscr S_k(h)\in (w,w+1]\Big\}.
\end{aligned}
\end{equation}
We can then work on the right-hand side for an upper bound.
Crucially, note that for $\mathbf{u}\in \mathcal{I}_k(w)$, 
 \begin{equation}\label{eq:urestriction}
     |u_j|\leq U_y(j)-L_y(j)\leq 40\min(j, (t-t_\theta)-j) \leq 100\Delta_j^{1/4},\quad \forall r\leq j\leq k.
 \end{equation}
 
\bigskip 

We now approximate the indicator function of every interval $[u_j,u_j+\Delta_j^{-1}]$ by a polynomial (applying this approximation to
$\mathbf 1(\mathscr Y_j(h)\in [u_j,u_j+\Delta_j^{-1}])$ then yields a Dirichlet polynomial as desired).
Given $\Delta>0$ and  $A>3$, let $G_{\Delta, A}$ be an entire function in $L^2(\mathbb{R})$ that approximates $\mathbf{1}(x\in[0,\Delta^{-1}])$ and satisfies, for some absolute constant $C>0$, the following properties:
\begin{enumerate}\label{list:properties}
    \item The Fourier transform $\hat{G}_{\Delta, A}$ is supported in $[-\Delta^{2A}, \Delta^{2A}]$.
    \item For all $x\in \mathbb{R}$, $0\leq G_{\Delta,A} \leq 1$.
    \item $\mathbf{1}(x\in[0,\Delta^{-1}])\leq G_{\Delta,A}(x)\cdot (1+Ce^{-\Delta^{A-1}})$. 
    \item We have $G_{\Delta,A}(x)\leq \mathbf{1}(x\in[-\Delta^{-A/2}, \Delta^{-1}+\Delta^{-A/2}) +Ce^{-\Delta^{A-1}}$. 
    \item $\int_{\mathbb{R}} |\hat{G}_{\Delta,A}(x)|\mathrm{d}x\leq 2\Delta^{2A}$.
\end{enumerate}
The existence of such a function is given by Lemma 6 in \cite{FHK1}. 
We then define a truncated version of $G_{\Delta,A}$ given by
\begin{equation}
\label{eqn: D}
\mathcal{D}_{\Delta,A}(x)=\sum_{\ell\leq \Delta^{10A}}\frac{(2\pi i x)^\ell}{\ell!} \int_{\mathbb{R}}\xi^\ell \hat{G}_{\Delta, A}(\xi)\mathrm{d}\xi,
\end{equation}
and approximate $\mathbf{1}(\mathscr{Y}_j(h)\in[u_j,u_j+\Delta_j^{-1}])$ by $\mathcal{D}_{\Delta_j,A}(\mathscr{Y}_j-u_j)$. 
Note that $\mathcal{D}_{\Delta_j,A}(\mathscr{Y}_j-u_j)$ is supported on integers $n$ whose prime factors lie in $(\exp e^{j-1+t_\theta},\exp e^{j+t_\theta}]$ and for which $\Omega(n)\leq \Delta_{j}^{10A}$. This polynomial is of length $\leq \exp(2\Delta_j^{10A}e^{j+t_\theta})$, and 
\begin{equation}\label{eq:propertyofcoeffs}
    \int_{\mathbb{R}}|\xi|^\ell |\hat{G}_{\Delta, A}(\xi)|\mathrm{d}\xi \leq \Delta^{2A\ell}  \int_{\mathbb{R}} |\hat{G}_{\Delta,A}|\mathrm{d}\xi\leq 2\Delta^{2A\ell+2A}.
\end{equation}
by the first and fifth properties of $G_{\Delta,A}$ in the list above.
As a result, its coefficients are bounded by $\ll \Delta_{j}^{2A(\ell+1)}$. 

This leads to the following.
\begin{lemma}[Approximation of indicator functions by Dirichlet polynomials]\label{indicatordirichlet}
    Let $\theta\in (-1,0)$, $\ell\geq -1$, $A>10$, and $y>1$. Let $|u_0|>1$, $k\geq 1$, and $w$ be such that $L_y(k)\leq w-m(k)\leq U_y(k)$. Then for any fixed $\tau\in [T,2T]$, we have
    \begin{align*}
        \mathbf{1}&\Big(h\in B_\ell^{(k)}\cap C_{\ell}^{(k)}\cap E^{\theta}(u_0),\mathscr{S}_k(h)\in[w,w+1]\Big)\\
                &\leq C \frac{|S_{t_\theta}(0)-S_{t_\theta}(h)|^{2q}}{(|u_0|-1)^{2q}}\sum_{\mathbf{u}\in\mathcal{I}_{k}(w)}\prod_{j=r}^k |\mathcal{D}_{\Delta_{j},A}(\mathscr{Y}_j(h)-u_j)|^2
    \end{align*}
    for some absolute constant $C>0$ and any $q>0$.
\end{lemma}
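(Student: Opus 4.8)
The plan is to follow the proof of the indicator‑approximation lemma of \cite{FHK1} essentially line by line; the only genuinely new ingredient is the factor $|\mathcal{D}_{1,A}(S_{t_\theta}(0)-S_{t_\theta}(h)-u_0)|$ carrying the event $E^\theta(u_0)$. The reason it can simply be inserted is structural: this factor is a (real part of a) Dirichlet polynomial in $\tau$ supported only on primes $p\le\exp(e^{t_\theta})$, hence disjoint in prime support from every increment $\mathscr{Y}_j:=\mathscr{S}_j-\mathscr{S}_{j-1}$ (whose primes satisfy $\log\log p\in(j-1+t_\theta,\,j+t_\theta]$) and from the Dirichlet polynomial $\mathcal{Q}$ one convolves against downstream, so the mean‑value and decoupling steps of Lemmas \ref{lem3} and \ref{lem4} go through unchanged. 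I would prove the inequality pointwise in $(h,\tau)$: if $h$ fails to lie in the event on the left there is nothing to do, since the right‑hand side is nonnegative; so assume it does.

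First I would reproduce the discretization step of \cite{FHK1}. Set $u_j:=\Delta_{j+t_\theta}^{-1}\lfloor\Delta_{j+t_\theta}\,\mathscr{Y}_j(h)\rfloor$ for $1\le j\le k$, so $\mathscr{Y}_j(h)-u_j\in[0,\Delta_{j+t_\theta}^{-1})$, and keep $u_0$ as given. Telescoping, $0\le\mathscr{S}_j(h)-\sum_{i\le j}u_i\le\sum_{i\le j}\Delta_{i+t_\theta}^{-1}$, and the right‑hand side is bounded by an absolute constant since $\sum_{i\ge1}\min(i+t_\theta,t(1+\theta)-i)^{-4}<\infty$. Combined with $h\in B_\ell^{(k)}\cap C_\ell^{(k)}$ and $\mathscr{S}_k(h)\in(w,w+1]$, this forces $\mathbf{u}=(u_1,\dots,u_k)\in\mathcal{I}_k(w)$: the barrier inequalities and the endpoint constraint on the partial sums $\mathscr{S}_j(h)$ transfer to $\sum_{i\le j}u_i$ up to an absolute additive error accommodated by the definition of $\mathcal{I}_k(w)$, and the induced bound $|u_j|\le U_y(j)-L_y(j)\le100\,\Delta_{j+t_\theta}^{1/4}$ holds for $y\ge4000$ (for $j\le\lceil y/4\rceil$ the barriers are vacuous, so no bound on $u_j$ is needed there). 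Since for this $\mathbf{u}$ all the indicators $\mathbf{1}(\mathscr{Y}_j(h)-u_j\in[0,\Delta_{j+t_\theta}^{-1}])$ and $\mathbf{1}(S_{t_\theta}(0)-S_{t_\theta}(h)-u_0\in[0,1])$ equal $1$, we obtain
\[
\mathbf{1}\big(h\in B_\ell^{(k)}\cap C_\ell^{(k)}\cap E^\theta(u_0):\mathscr{S}_k(h)\in[w,w+1]\big)\le\sum_{\mathbf{u}\in\mathcal{I}_k(w)}\mathbf{1}\big(S_{t_\theta}(0)-S_{t_\theta}(h)-u_0\in[0,1]\big)\prod_{j=1}^k\mathbf{1}\big(\mathscr{Y}_j(h)-u_j\in[0,\Delta_{j+t_\theta}^{-1}]\big).
\]

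Next I would replace each crude indicator by the corresponding $\mathcal{D}$‑polynomial, exactly as in \cite{FHK1}. From $\mathbf{1}(x\in[0,\Delta^{-1}])\le(1+Ce^{-\Delta^{A-1}})\,G_{\Delta,A}(x)$ and $0\le G_{\Delta,A}\le1$, the indicator's support lies where $G_{\Delta,A}(x)$ is bounded below; and because $\hat G_{\Delta,A}$ is supported in $[-\Delta^{2A},\Delta^{2A}]$ while $\mathcal{D}_{\Delta,A}$ is the order‑$\Delta^{10A}$ truncation of the Taylor series of $G_{\Delta,A}$, the factorial in the remainder makes $\mathcal{D}_{\Delta,A}$ close to $G_{\Delta,A}$ for $|x|\le100\,\Delta^{1/4}$; hence $\mathbf{1}(x\in[0,\Delta^{-1}])\le(1+C'e^{-\Delta^{A-1}})\,|\mathcal{D}_{\Delta,A}(x)|^2$ (for the $O(1)$ smallest‑$\Delta$ indices one uses a fixed constant in place of $1+C'e^{-\Delta^{A-1}}$, as \cite{FHK1} does for the terminal steps). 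Taking $\Delta=\Delta_{j+t_\theta}$ and multiplying over $1\le j\le k$, the product of these constants telescopes into an absolute constant by summability of $e^{-\Delta_{j+t_\theta}^{A-1}}$, guaranteed by $\Delta_j=\min(t_\theta+j,t(1+\theta)-j)^4$. For the leading factor, the construction of $G_{1,A}$ in \cite{FHK1} gives $\mathcal{D}_{1,A}(z)\gg1$ uniformly for $z\in[0,1]$, so $\mathbf{1}(z\in[0,1])\ll|\mathcal{D}_{1,A}(z)|$. Substituting both estimates into the display and absorbing all constants into a single absolute $C$ finishes the proof.

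The step I expect to be the main obstacle is the discretization bookkeeping: one must check carefully that rounding each increment down to its lattice $\Delta_{j+t_\theta}^{-1}\mathbb{Z}$ preserves every barrier and endpoint inequality defining $\mathcal{I}_k(w)$ — tracking the accumulated error $\sum_i\Delta_{i+t_\theta}^{-1}=O(1)$ and the bound $|u_j|\le U_y(j)-L_y(j)$, which is exactly where the hypotheses $y\ge4000$ and the case split at $\lceil y/4\rceil$ enter — and that the multiplicative constants produced in the second step stay absolute after the product over all $j\le k$. Everything else is a transcription of \cite{FHK1}, with $S_k$ replaced by $\mathscr{S}_k$ and $[1,t]$ by $[1,t(1+\theta)]$.
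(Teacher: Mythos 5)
Your proposal is correct and follows essentially the same route as the paper's proof, which adapts Lemma~7 of \cite{FHK1}: you establish the pointwise domination by explicitly choosing $u_j$ via a floor on the lattice, verify that $\mathbf{u}\in\mathcal{I}_k(w)$ using the barrier and endpoint constraints, and then replace each crude indicator by the corresponding $|\mathcal{D}_{\Delta,A}|^2$ (and the $E^\theta(u_0)$ indicator by a power of $|\mathcal{D}_{1,A}|$) using the Beurling--Selberg-type properties of $G_{\Delta,A}$ and the truncation estimate. The paper's write-up is more abstract about the discretization (it does not spell out the floor), but the underlying argument is identical, and the bookkeeping concerns you flag — the $O(1)$ accumulated rounding error and the bound $|u_j|\le 100\Delta_j^{1/4}$ via $y\ge 4000$ — are exactly the ones the paper also leans on.
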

\begin{proof}
        It suffices to show that for every $j\in[r, k]$ and $|u_j|\leq100\min(j, (t-t_\theta)-j)$
        \begin{align}\label{incind}
            &\mathbf{1}(\mathscr{Y}_j(h)\in[u_j, u_j+\Delta_{j}^{-1}]) \leq |\mathcal{D}_{\Delta_{j},A}(\mathscr{Y}_j(h)-u_j)|^2(1+Ce^{-\Delta_j^{A-1}}),\quad \text{ for }j>r,
        \end{align}
        and otherwise that
        \begin{align}\label{incindr}
        \mathbf{1}(\mathscr{Y}_r(h)\in[u_r, u_r+\Delta_{r}^{-1}]) \leq C|\mathcal{D}_{\Delta_{r},A}(\mathscr{Y}_r(h)-u_r)|^2
        \end{align}
        for some constant $C>0$.
        Noting that 
        \[
            \mathbf{1}(h\in E^\theta(u_0))\leq \frac{|S_{t_\theta}(0)-S_{t_\theta}(h)|^{2q}}{(|u_0|-1)^{2q}},
        \]
         the inclusion in \eqref{eqn: inclusion} together with \eqref{incind} and \eqref{incindr} would then imply that
    \begin{align*}
        \mathbf{1}&\Big(h\in B_\ell^{(k)}\cap C_{\ell}^{(k)}\cap E^\theta(u_0):\mathscr{S}_k(h)\in[w,w+1]\Big)
                \leq \frac{|S_{t_\theta}(0)-S_{t_\theta}(h)|^{2q}}{(|u_0|-1)^{2q}} *\\&C\sum_{\mathbf{u}\in\mathcal{I}_{k}(w)}|\mathcal{D}_{\Delta_{r},A}(\mathscr{Y}_r(h)-u_r)|^2\prod_{j=r+1}^k \left(|\mathcal{D}_{\Delta_{j},A}(\mathscr{Y}_j(h)-u_j)|^2(1+Ce^{-\Delta_j^{A-1}})\right),
    \end{align*}
    and the lemma follows since $\prod_j(1+Ce^{-\Delta_j^{A-1}})$ is bounded by an absolute constant.

    The proof of \eqref{incind} and \eqref{incindr} follows from the proof of Lemma 7 in \cite{FHK1}. We sketch the main steps. To prove \eqref{incind}, one first bounds the left-hand side from above by
    \[
        |G_{\Delta_j,A}(\mathscr{Y}_j(h)-u_j)|^2(1+Ce^{-\Delta_j^{A-1}}) = \Big|\int_{\mathbb{R}}e^{2\pi i\xi(\mathscr{Y}_j(h)-u_j)}\hat{G}_{\Delta_j,A}(\xi)\mathrm{d}\xi\Big|^2(1+Ce^{-\Delta_j^{A-1}}).
    \]
    By expanding the exponential up to order $\Delta_j^{10A}$, this integral equals the sum of $\mathcal{D}_{\Delta_j, A}(\mathscr{Y}_j(h)-u_j)$ and an error term $\mathcal{E}$, which is shown to be $\leq e^{-\Delta_j^{4A}}$ in absolute value using the aforementioned properties of $G_{\Delta_j,A}$ and the fact that $|\mathscr{Y}_j(h)-u_j|\leq 10^4\Delta_j^{1/4}$. Noting that $|\mathcal{D}_{\Delta_j,A}(\mathscr{Y}_j(h)-u_j)|\geq 1/2$ whenever the left-hand side in \eqref{incind} equals $1$, this additive error $\mathcal{E}$ can then be made multiplicative, and the inequality follows.
    
    The proof of \eqref{incindr} is similar.
\end{proof}
We now compare the increments $\mathscr{Y}_j$ to the same random Steinhaus model used in Section \ref{corrsection}. An application of the Berry-Esseen theorem is then used to compare the latter to Gaussian random variables, and by extension, $\mathscr{S}_k$ to a bona fide random walk.

As in \eqref{eqn: X_p}, let
\begin{equation}\label{eq:Steinhaus}
        X_p(h)=\text{Re}\left(Z_p p^{-1/2-ih}+\tfrac{1}{2}Z_p^2p^{-1-2ih}\right),
\end{equation}
where the $Z_p$'s are i.i.d.\ Steinhaus random variables.
Let
\begin{equation}\label{eq:Steinhaus2}
        \mathcal{S}_k(h)=\sum_{e^{1000}<\log p \leq e^k}X_p(h),\text{ and } \mathcal{Y}_k(h)=\mathcal{S}_{k+t_\theta}(h)-\mathcal{S}_{k+t_\theta-1}(h), \quad 1\leq k\leq t-t_\theta.
\end{equation}
We also define
\begin{equation}\label{eq:Wdef}
        \mathcal{W}_k = \sum_{1000\leq \ell \leq k} \mathcal{N}_\ell,\quad 1000\leq k\leq t,
\end{equation}
where the $\mathcal{N}_\ell's$ are real Gaussian random variables with zero mean and variance $1/2$. 

{\begin{lemma}[Gaussian comparison]\label{gaussiancomparison}
Let $A=20$ and $\ell\geq 0$ satisfy (\ref{finalincrement}). Let $k\in(t_\ell,t_{\ell+1}]$, and assume that $1< |u_0|<C\sqrt{t-t_\theta}$ for some constant $C$. Then one has for any $|h|\leq 2$,
\begin{align*}
    &\E{\frac{|S_{t_\theta}(0)-S_{t_\theta}(h)|^{2q}}{(|u_0|-1)^{2q}}\prod_{j=r}^k |\mathcal{D}_{\Delta_j,A}(\mathscr{Y}_j(h)-u_j)|^2} \\
    &\leq (1+Ce^{-ae^t})\E{\frac{|{S}_{t_\theta}(0)-{S}_{t_\theta}(h)|^{2q}}{(|u_0|-1)^{2q}}} \prod_{j=r}^k \E{|\mathcal{D}_{\Delta_j, A}(\mathcal Y_j(h)-u_j)|^2}
\end{align*}
with $C,a>0$ absolute constants, and $q=\lfloor (|u_0|-1)^2/8\rfloor$. Furthermore, for $w-m(k)\in[L_y(k),U_y(k)]$, we have
\begin{align}\label{eq:GaussianComparison}
    &\sum_{\substack{\mathbf{u}\in \mathcal{I}_k(w)}}\E{\frac{|{S}_{t_\theta}(0)-{S}_{t_\theta}(h)|^{2q}}{(|u_0|-1)^{2q}}} \prod_{j=r}^k \E{|\mathcal{D}_{\Delta_j, A}(\mathcal Y_j(h)-u_j)|^2} \nonumber \\
    &\leq C e^{-u_0^2/16}\sum_{\substack{\mathbf{u}\in \mathcal{I}_k(w)}} {\P{\mathcal{W}_{r}\in[u_r,u_r+\Delta_r^{-1}),\mathcal{N}_j\in[u_j,u_j+\Delta_j^{-1}], \ \forall r<j\leq k},}
\end{align}
for some absolute constant $C>0$.
\end{lemma}}

\begin{proof}
    This follows the proof of Lemma 8 in \cite{FHK1}. We give the broad lines. The first claim follows from using Lemma \ref{splittinglemma} to factor out the $2q-$th moment of $S_{t_\theta}(0)-S_{t_\theta}(h)$, and the mean-value theorem for Dirichlet polynomials (Lemma \ref{MVMV}) to compare $\E{\prod_j|\mathcal{D}_{\Delta_j, A}(\mathscr Y_j(h)-u_j)|^2}$ to $\prod_j\E{|\mathcal{D}_{\Delta_j, A}(\mathcal Y_j(h)-u_j)|^2}$ (noting the independence of the $\mathcal{Y}_j$).
    The error when using these lemmas is small since the length of the polynomial $(S_{t_\theta}(h)-S_{t_\theta}(0))^q\prod_{j=r}^k\mathcal{D}_{\Delta_j, A}(\mathscr{Y}_j(h)-u_j)$ is at most
    \[
    \exp\big(2qe^{t_\theta}+2\sum_{r\leq j\leq k} e^{j+t_\theta}\Delta_j^{200}\big)\leq \exp\big(100e^{t_{\ell+1}+t_\theta}((t-t_\theta)-k)^{800}\big)\leq T^{1/100},
\]
 for large enough $T$, by the condition \eqref{finalincrement}.
This proves the first inequality.

For the second inequality, we begin by using Lemma \ref{MV} and Stirling's approximation on the $2q$-th moment of $|{S}_{t_\theta}(0)-{S}_{t_\theta}(h)|$ to get the Gaussian tail $e^{-u_0^2/16}$. 

To bound each $\mathbb{E}\{|\mathcal{D}_{\Delta_j,A}(\mathcal{Y}_j(h)-u_j)|^2\}$ for $r<j\leq k$, one first argues that
\begin{equation}\label{eq:DtoG}
        \mathbb{E}\big\{|\mathcal{D}_{\Delta_j,A}(\mathcal{Y}_j(h)-u_j)|^2\big\}\leq \mathbb{E}\big\{|G_{\Delta_j,A}(\mathcal{Y}_j(h)-u_j)|^2\big\}+O\big(e^{-\tfrac{1}{8}\Delta_j^{6A}}\big)
\end{equation}
by decomposing the left-hand side according to whether $|\mathcal{Y}_j(h)-u_j|\leq \Delta_{j}^{6A}$ or $>\Delta_{j}^{6A}$. In the former case, the inequality follows directly from the defining equation for $\mathcal{D}_{\Delta_j,A}$ \eqref{eqn: D} and the fact that $G_{\Delta_j,A}(\mathcal{Y}_j(h)-u_j)\in (0,1)$. For the latter, the Cauchy-Schwarz inequality yields
\[
    \mathbb{P}\big\{|\mathcal{Y}_j(h)-u_j|> \Delta_{j}^{6A}\big\}^{1/2}\mathbb{E}\big\{|\mathcal{D}_{\Delta_j,A}(\mathcal{Y}_j(h)-u_j)|^4\big\}^{1/2}.
\]
The probability is then shown to be small by a Chernoff bound, while the fourth moment is straightforwardly bounded by \eqref{eqn: D} and an estimate similar to \eqref{eq:propertyofcoeffs}. 

Using the fourth property of $G_{\Delta_j,A}$ \eqref{list:properties} and the Berry-Esseen theorem \cite[Lemma 20]{FHK1}, it follows that
\begin{align*}
        \mathbb{E}\big\{|G_{\Delta_j,A}(\mathcal{Y}_j(h)-u_j)|^2\big\}&\leq \mathbb{P}\big\{\mathcal{Y}_j(h)\in [u_j-\Delta_j^{-A/2},u_j+\Delta_j^{-1}+\Delta_j^{-A/2}]\big\}+O\big(e^{-\Delta_j^{A-1}}\big)
\end{align*}
and this probability can be shown to equal $ \mathbb{P}\big\{\mathcal{N}_j\in [u_j,u_j+\Delta_j^{-1}]\big\}\cdot(1+O(\Delta_j^{-A/4}))$  using the restrictions $|u_j|\leq 100\min(j,t-t_\theta-j)$ (see \eqref{eq:urestriction}). The same argument also shows that $\mathbb{E}\{|\mathcal{D}_{\Delta_r,A}(\mathcal{Y}_r(h)-u_r)|^2\}\ll \mathbb{P}\{\mathcal{W}_r\in [u_r,u_r+\Delta_r^{-1}]\}$, and the lemma follows since $\prod_j(1+O(\Delta_j^{-A/4}))$ is bounded by an absolute constant.

\end{proof}

To complete the proof of Lemma \ref{lem3}, we need to deal with the maximum and the additional Dirichlet polynomial $\mathcal Q$. We note that Lemma \ref{indicatordirichlet} implies that
\begin{align}\label{fourtyfour}
    &\mathbf{1}\left(h\in B^{(k)}_\ell \cap C_{\ell}^{(k)}\cap E^\theta(u_0)\text{ and } \mathscr{S}_k(h)\in(w,w+1]\right) \nonumber \\
    &\leq C\frac{|S_{t_\theta}(0)-S_{t_\theta}(h)|^{2q}}{(|u_0|-1)^{2q}}\sum_{\mathbf{u}\in \mathcal{I}_k(w)} {\prod_{j=r}^k}|\mathcal{D}_{\Delta_j,A}(\mathscr{Y}_{j}(h)-u_j)|^2,
\end{align}
for some absolute constant $C$, $q=\lfloor (|u_0|-1)^2/8\rfloor$ and $A=20$. The right-hand side can be written as a linear combination $\sum_{i\in\mathcal{I}}D_i$ of Dirichlet polynomials $D_i$, each of length
\[
\leq {\exp\big(2qe^{t_\theta}+2\sum_{r\leq j\leq k} e^{j+t_\theta}\Delta_j^{200}\big)\leq \exp\big(100e^{k+t_\theta}((t-t_\theta)-k)^{800}\big).}
\]
Therefore, multiplying (\ref{fourtyfour}) by an arbitrary Dirichlet polynomial $\mathcal{Q}$ of length $N\leq \exp(\tfrac{e^t}{100})$ and discretizing using Lemma \ref{disclemma} yields
\begin{align}\label{midpointlem3}
\begin{split}
    \E{\max_{|h|\leq 2\log^\theta T}|\mathcal{Q}(\tfrac{1}{2}+i\tau+ih)|^2\mathbf{1}\left(h\in B_{\ell}^{(k)}\cap C_{\ell}^{(k)}\cap E^\theta(u_0),\, \mathscr{S}_k(h)\in(w,w+1]\right)} \\
    \ll (e^{-t_\theta} \log N+e^{k}((t-t_\theta)-k)^{800})\sum_{i\in \mathcal{I}}\E{|\mathcal{Q}(\tfrac{1}{2}+i\tau)|^2|D_i(\tfrac{1}{2}+i\tau)^2|}.
    \end{split}
\end{align}
We used the fact that the expectations are roughly of the same order for all the relevant $h$'s after discretising, except for the $h$'s associated with very large $j$ in Lemma \ref{disclemma} for which the contribution is negligible (see the paragraph after (8) in \cite{FHK1} for more detail). 

\begin{remark}\label{mesomaxsubtlety} In the $\theta=0$ case, the Dirichlet polynomial used to approximate the indicator function pertaining to $S_k$ is of length $\leq \exp(100e^k(t-k)^{800})$, whereas for $\mathscr{S}_{k}$ (when $\theta<0$), our bound increased to $\exp(100e^{k+t_\theta}(t-t_\theta-k)^{800})$. The number of points arising from the discretization however remains of the same order, since doing so on a smaller interval (of width $\asymp e^{t\theta}$) gives the additional factor of $e^{-t_\theta}$ needed to counteract the longer polynomial.
\end{remark}

Noting that the $D_i$ are supported on primes up to $\exp(e^{k+t_\theta})$ while $\mathcal{Q}$ is supported on primes $>\exp(e^{k+t_\theta})$, we can use Lemma \ref{splittinglemma} to split each  expectation in the sum in \eqref{midpointlem3} and write
\[
    \E{|\mathcal{Q}(\tfrac{1}{2}+i\tau)|^2|D_i(\tfrac{1}{2}+i\tau)|^2} \leq 2\E{|\mathcal{Q}(\tfrac{1}{2}+i\tau)|^2}\E{|{D}_i(\tfrac{1}{2}+i\tau)|^2}.
\]
By definition of the $D_i$ and Lemma \ref{gaussiancomparison}, we have
\begin{align*}
    &\sum_{i\in \mathcal{I}}\E{|D_i(\tfrac{1}{2}+i\tau)^2|}
     \\  &\leq C\sum_{\mathbf{u}\in \mathcal{I}_k(w)} e^{-u_0^2/16}\cdot \P{\mathcal{W}_r\in [u_r,u_r+\Delta_r^{-1}],\mathcal{N}_j\in[u_j,u_j+\Delta_j^{-1}],\,\forall r<j\leq k},
\end{align*}
for $C>0$ an absolute constant. By definition of $\mathcal{I}_k(w)$, this is
\begin{align*}
    \leq C\ e^{-u_0^2/16}\cdot
    \P{\mathcal{W}_j\leq m(j)+U_y(j)+2 \text{ for all $1\leq j\leq k$ and }\mathcal{W}_k\in[w-2,w+2]}.
\end{align*}
It follows that for any $|h|\leq 2\log^\theta T$,
\begin{align*}
        &\E{\max_{|h|\leq \log^\theta T}|\mathcal{Q}(\tfrac{1}{2}+i\tau+ih)|^2\mathbf{1}\left(h\in B_{\ell}^{(k)}\cap C_{\ell}^{(k)}\cap E^\theta(u_0),\, \mathscr{S}_k(h)\in(w,w+1]\right)} \\
        &\ll  e^{-u_0^2/16}\cdot \E{|\mathcal{Q}(\tfrac{1}{2}+i\tau)|^2}(e^{-t_\theta}\log N+e^k((t-t_\theta)-k)^{800}) \\
        &\quad\quad\times \P{\mathcal{W}_j\leq m(j)+U_y(j)+2 \text{ for all $1\leq j\leq k$ and }\mathcal{W}_k\in[w-2,w+2]}.
\end{align*}
{
After decomposing the rightmost factor into
\[
    \sum_{i\in\{-1,0,1,2\}} \P{\mathcal{W}_j\leq m(j)+U_{y}(j)+2,\  \forall 1\leq j\leq k \text{ and } \mathcal{W}_k\in (w+i-1, w+i]},
\]
we can bound each term in this sum using the ballot estimate (Proposition \ref{prop:ballot}) (with $y$ replaced by $y+2$) to get
\[
     \ll \frac{y(U_y(k)-w+m(k)+2)e^{-w^2/k}}{k^{3/2}}.
\]
This proves Lemma \ref{lem3}.}

\subsection{Proof of Lemma \ref{lem4}} The argument follows that of the previous section, with the addition of a twisted fourth moment estimate for the zeta function. 

For any $\ell\geq 0$, we call a Dirichlet polynomial $\mathcal{Q}$ \textit{degree-$k$ well-factorable} if it can be expressed as $\mathcal{Q}_\ell^{(k)}(s)\prod_{0\leq \lambda \leq \ell}\mathcal{Q}_\lambda (s)$, where 
\[
    \mathcal{Q}_{\lambda}(s)=\sum_{\substack{p|m\implies \log p \in (e^{t_{\lambda-1}+t_\theta}, e^{t_\lambda+t_\theta}] \\ \Omega_\lambda(m)\leq 10(t_\lambda -t_{\lambda-1})^{10^4}}}\frac{\eta(m)}{m^s}, \text{ and } \mathcal{Q}_{\ell}^{(k)}(s)=\sum_{\substack{p|m\implies \log p \in (e^{t_\ell+t_\theta}, e^{k+t_\theta}] \\ \Omega_\ell(m)\leq 10(t_{\ell+1} -t_{\ell})^{10^4}}}\frac{\eta(m)}{m^s}
\]
for arbitrary coefficients $\eta$ satisfying $|\eta(m)|\leq \exp(e^{t}/500)$ for every $m\geq 1$. 
With this definition, we have the following lemma.
\begin{lemma}[Lemma 9 in \cite{FHK1}]\label{twisted} Let $\ell\geq 0$ satisfy (\ref{finalincrement}), and $k\in [t_\ell,t_{\ell+1}]$. Let $\mathcal{Q}$ be a degree-k well-factorable Dirichlet polynomial. Then for any $|h|\leq 2$ and large enough $t$,
\[
    \E{|(\zeta_\tau\mathcal{M}_{-1}\cdots\mathcal{M}_{\ell}\mathcal{M}_{\ell}^{(k)})(h)|^4\cdot |\mathcal{Q}(\tfrac{1}{2}+i\tau+ih)|^2} \ll e^{4(t-t_\theta-k)}\E{|\mathcal{Q}(\tfrac{1}{2}+i\tau+ih)|^2}. 
\]
\end{lemma}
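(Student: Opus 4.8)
The plan is to run the proof of Lemma~9 in \cite{FHK1} essentially unchanged, the only structural difference being that every prime range occurring there is shifted upward by $t_\theta$. We will see that the argument in fact yields the slightly stronger estimate
\[
    \E{|(\zeta_\tau\mathcal{M}_{-1}\cdots\mathcal{M}_{\ell}\mathcal{M}_{\ell}^{(k)})(h)|^4\cdot |\mathcal{Q}(\tfrac{1}{2}+i\tau+ih)|^2} \ll e^{4(t(1+\theta)-k)}\,\E{|\mathcal{Q}(\tfrac{1}{2}+i\tau+ih)|^2},
\]
which is what is actually invoked in Lemma~\ref{lem4} and its applications; since $t(1+\theta)\leq t$ for $\theta\in(-1,0]$, this implies the bound in the statement, and coincides with it when $\theta=0$.

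\textbf{Step 1: reduction to an arithmetic main term.} Write $\mathcal{N}:=\mathcal{M}_{-1}\cdots\mathcal{M}_{\ell}\mathcal{M}_{\ell}^{(k)}$; this is a Dirichlet polynomial supported on integers whose prime factors $p$ satisfy $\log\log p\leq k+t_\theta$, built so that $\mathcal{N}(h)$ approximates $e^{-S_{k+t_\theta}(h)}$, and by the defining constraint on $\mathcal{M}_\ell^{(k)}$ together with the assumption that $\ell$ satisfies (\ref{finalincrement}), it has length $\ll\exp\!\big(100\,e^{k+t_\theta}(t(1+\theta)-k)^{800}\big)\leq T^{1/100}$. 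The well-factorable $\mathcal{Q}$ likewise has length $\leq T^{1/100}$. I would then apply the twisted fourth--moment machinery of \cite{FHK1} (the approximate functional equation for $\zeta^2$ followed by the mean value theorem for Dirichlet polynomials, Lemma~\ref{MVMV}, with the standard device handling lengths a little above $T$), which is available precisely because the mollifier and $\mathcal{Q}$ are this short, and which reduces the left-hand side to the associated sum $\sum_n a(n)^2/n$ of squared Dirichlet coefficients; the translation by $h$ is immaterial since $|h|\leq\log^\theta T$.

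\textbf{Step 2: peeling off the mollifiers.} Next I would factor $n$ according to which prime block each of its prime factors lies in. The prime blocks of $\mathcal{N}$ and those of the well-factorable pieces of $\mathcal{Q}$ are pairwise disjoint, so the splitting Lemma~\ref{splittinglemma} factorizes the coefficient sum over these blocks. On each block carried by $\mathcal{N}$ the local contribution of $\zeta^2$ against the square of that piece of $\mathcal{N}$ is $O(1)$ --- this is exactly the statement that the mollifier cancels the Euler factor of $\zeta^2$ on that range, proved as in \cite{FHK1} --- the blocks carrying $\mathcal{Q}$ reassemble into $\E{|\mathcal{Q}(\tfrac{1}{2}+i\tau+ih)|^2}$, and the primes $p$ with $\log\log p>k+t_\theta$, which $\mathcal{N}$ does not touch, contribute
\[
    \prod_{\exp(e^{k+t_\theta})<p\leq T}\Big(1+\frac{d(p)^2}{p}+O(p^{-3/2})\Big)\asymp\exp\!\Big(4\sum_{\exp(e^{k+t_\theta})<p\leq T}\frac{1}{p}\Big)=e^{4(t(1+\theta)-k)},
\]
using Mertens' theorem together with $\log\log T=t$ and $t-t_\theta=t(1+\theta)$. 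Multiplying the three contributions gives the displayed bound. The fact that $\mathcal{N}$ is a longer polynomial than in the $\theta=0$ case, and that one nonetheless stays below $T^{1/100}$, is the content of Remark~\ref{mesomaxsubtlety}.

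\textbf{Main obstacle.} The delicate point, exactly as in \cite{FHK1}, is Step~1: to control $\zeta_\tau^4$ against the Dirichlet polynomial $|\mathcal{N}\mathcal{Q}|^2$ one must keep the total length safely below a fixed power of $T$, and it is here that condition (\ref{finalincrement}) --- whose length exponent in the mesoscopic regime carries the extra factor $e^{t_\theta}$ coming from the shifted primes --- is used in an essential way. Everything downstream is the bookkeeping of \cite{FHK1} with the upper end of each prime range shifted by $t_\theta$ and the barrier arithmetic carried out with $t$ replaced by $t(1+\theta)$.
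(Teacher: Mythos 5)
The paper does not actually prove this lemma: it is imported wholesale as ``Lemma 9 in \cite{FHK1}'' and the reader is left to verify that the argument carries through once every prime range is shifted up by $t_\theta$. Your sketch correctly reconstructs that adaptation in the expected way --- twisted fourth moment of $\zeta^2$ reduced to a coefficient sum via the mean-value theorem, factorization across disjoint prime blocks (Lemma~\ref{splittinglemma}), mollifier cancellation of the Euler factors of $\zeta^2$ in the covered range, and Mertens on the uncancelled tail.

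Your observation that the argument in fact yields $e^{4(t(1+\theta)-k)}$ rather than the stated $e^{4(t-k)}$ is more than a cosmetic strengthening. The product $\mathcal{M}_{-1}\cdots\mathcal{M}_\ell\mathcal{M}_\ell^{(k)}$ cancels $\zeta$ only on primes up to $\exp(e^{k+t_\theta})$, so the Mertens mass of the uncancelled tail is $t-(k+t_\theta)=t(1+\theta)-k$, and that is the exponent which actually has to feed into Lemma~\ref{lem4}: the two candidate bounds differ by $e^{4|\theta|t}=(\log T)^{4|\theta|}$, a polynomial loss which the downstream argument cannot absorb, and which the displayed conclusion of Lemma~\ref{lem4} (bound $e^{4(t(1+\theta)-k)}$) does not allow. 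So the version you derive is the one the paper implicitly relies upon, and the statement as printed is weaker than what is used. One small slip worth flagging: in bounding the mollifier's length you wrote the exponent $800$, which comes from the discretization polynomials $\mathcal{D}_{\Delta_j,A}$; for the mollifiers themselves the $\Omega$ truncation gives $(t_\ell-t_{\ell-1})^{10^5}$, so the length is $\ll\exp\bigl((t(1+\theta)-k)^{10^5}e^{k+t_\theta}\bigr)$ --- still within $T^{1/100}$ under (\ref{finalincrement}), so the conclusion is unaffected.
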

\begin{remark}
    The factor $e^{4(t-t_\theta-k)}$ differs from the one in \cite{FHK1} since the product of mollifiers here ranges over primes up to $\exp(e^{t_\theta+k})$ as opposed to $\exp(e^{k})$. The lemma as stated in \cite{FHK1} is recovered at $\theta=0$.
\end{remark}
The intuition behind this result is that $\zeta_\tau\mathcal{M}_{-1}\cdots\mathcal{M}_{\ell}\mathcal{M}_{\ell}^{(k)}$ should be supported on primes larger than $k$, and is therefore expected to decouple from $\mathcal{Q}$.
The exponential term $e^{4((t-t_\theta)-k)}$ then corresponds to $\mathbb{E}\big\{|(\zeta_\tau\mathcal{M}_{-1}\cdots\mathcal{M}_{\ell}\mathcal{M}_{\ell}^{(k)})(h)|^4\big\}$. Note that it is also the moment generating function of a Gaussian random variable of mean $0$ and variance $\frac{1}{2}(t-t_\theta-k)$, which agrees with the random walk heuristic for $\log|\zeta_\tau \mathcal{M}_{-1}\cdots\mathcal{M}_\ell^{(k)}|\approx \mathscr{S}_{t-t_\theta}-\mathscr{S}_{k}$.

Arguing as in the previous section, we have 
\begin{align*}
    &\E{|(\zeta_\tau\mathcal{M}_{-1}\cdots\mathcal{M}_{\ell}^{(k)}(h)|^4\cdot |\mathcal{Q}_\ell^{(k)}(h)|^2\cdot\mathbf{1}\left(h\in B_\ell\cap C_\ell\cap E^\theta(u),\, \mathscr{S}_{t_\ell}\in(v,v+1]\right)} \\
    &\ll \sum_{\mathbf{u}\in \mathcal{I}_{t_\ell}(v)} \mathbb{E}\bigg\{|\zeta_\tau \mathcal{M}_{-1}\cdots \mathcal{M}_\ell^{(k)}(h)|^4 \cdot |\mathcal{Q}_{\ell}^{(k)}(h)|^2 \\
    &\hspace{3cm}\cdot \frac{|S_{t_\theta}(0)-S_{t_\theta}(h)|^{2q}}{(|u_0|-1)^{2q}}\prod_j |\mathcal{D}_{\Delta_j, A}(\mathscr{Y}_k(h)-u_j)|^2\bigg\},
\end{align*}
where we picked $A=20$ and $q:=\lfloor(|u_0|-1)^2/8\rfloor$. Then for every $\mathbf{u}$ in the sum, the polynomial
\[
    \mathcal{Q}_{\ell}^{(k)}(h){(S_{t_\theta}(0)-S_{t_\theta}(h))^q}\prod_j \mathcal{D}_{\Delta_j, A}(\mathscr{Y}_k(h)-u_j)
\]
is well-factorable and has length $<\exp(e^t/100)$ by the definitions of $\mathcal{Q}_{\ell}^{(k)}$ and $\mathcal{D}_{\Delta_j, A}$. It thus follows from Lemma \ref{twisted} that 
\begin{align*}
    &\sum_{\mathbf{u}\in \mathcal{I}_{t_\ell}(v)} \mathbb{E}\bigg\{|\zeta_\tau \mathcal{M}_{-1}\cdots \mathcal{M}_\ell^{(k)}(h)|^4 \cdot |\mathcal{Q}_{\ell}^{(k)}(h)|^2 \\
    &\hspace{3cm}\cdot \frac{|S_{t_\theta}(0)-S_{t_\theta}(h)|^{2q}}{(|u_0|-1)^{2q}}\prod_j |\mathcal{D}_{\Delta_j, A}(\mathscr{Y}_k(h)-u_j)|^2\bigg\} \\
    &\ll e^{4((t-t_\theta)-t_{\ell+1})} \sum_{\mathbf{u}\in \mathcal{I}_{t_\ell}(v)}\mathbb{E}\bigg\{|\mathcal{Q}_{\ell}^{(k)}(h)|^2 \cdot \frac{|S_{t_\theta}(0)-S_{t_\theta}(h)|^{2q}}{(|u_0|-1)^{2q}}
    \\
    &\hspace{7cm}\times\prod_j |\mathcal{D}_{\Delta_j, A}(\mathscr{Y}_k(h)-u_j)|^2\bigg\}.
\end{align*}
The lemma then follows by proceeding as in the previous section starting from Equation (\ref{midpointlem3}).

\section{Constrained large deviation estimates}\label{sec:LD}

We now prove Theorem \ref{conditionLD} by combining the arguments of the previous two sections with those in \cite{arguinbailey1}, which adapt the recursive scheme to prove large deviation estimates for $\log|\zeta(1/2+i\tau)|$.

Here, the situation is considerably simpler than in Section \ref{upperboundsection}. First, since Theorem \ref{conditionLD} is an estimate at a single point $h$, no (spatial) discretization argument is needed. Second, unlike in Theorem \ref{auxUB}, the primes $p<\exp(e^{t_\theta})$ and ``error'' $S_{t_\theta}(h)-S_{t_\theta}(0)$ play essentially no role. 
Throughout this section, we take $h=0$, as the proof holds mutatis mutandis for general $|h|\leq 2$.

\bigskip

Since we are working at a single point, the recursive scheme employed here replaces the {good sets} of Section \ref{upperboundsection} by good {\it events}, also denoted by $G_\ell$. These restrict the behaviour of the sum $\mathscr{S}_k(0)$ at a sequence of sparse checkpoints $k=t_\ell$, where
$$t_\ell = (t-t_\theta)-\mathfrak{s}\log_\ell (t-t_\theta)$$ for $\ell>0$, and $\mathfrak{s}=2\cdot 10^6$. (This is the same definition as in \eqref{eq:timescales}, but with a larger constant $\mathfrak{s}$.)

{Our initial step, Proposition \ref{baseLD} below, goes up to time $t_1$. Accordingly, we want $\mathcal{M}_1$ to include primes up to $\exp(e^{t_1+t_\theta})$; we keep the same definition of $\mathcal{M}_\ell$ as in Equation \eqref{def:mollifiers}, but adopt the convention $t_0=1000-t_\theta$ to get
\[
    \mathcal{M}_1 (0) = \sum_{{\substack{p|m\Rightarrow 1000\leq \log \log p \leq t_1+t_\theta\\ \Omega_1(m)\leq (t_1+t_\theta)^{10^5}}}}\frac{\mu(m)}{m^{1/2+i\tau}}.
\]} 
Moreover, $\mathscr{S}_{t_1}-\mathscr{S}_{t_0}=S_{t_1+t_\theta}$. For simplicity, we will write
$S_k, \mathscr{S}_k, \zeta_\tau$ and $\mathcal{M}_\ell$ to denote each of these (random) functions evaluated at $0$.

For $\ell\geq 1$ and $\kappa=V/(t-t_\theta)$, we introduce the events
\begin{align*}
    &A_{\ell}=A_{\ell-1}\cap \{|\widetilde{\mathscr{S}}_{t_\ell}-\widetilde{\mathscr{S}}_{t_{\ell-1}}|<10^3(t_{\ell}-t_{\ell-1})\}\\
    &B_{\ell}=B_{\ell-1}\cap \{\mathscr{S}_{t_\ell}\leq \kappa t_\ell+\mathfrak{s}_0\log_{\ell}(t-t_\theta)\},\\
    &C_{\ell}=C_{\ell-1}\cap \{\mathscr{S}_{t_\ell}> \kappa t_\ell-\mathfrak{s}_0\log_{\ell}(t-t_\theta)\}\\
    &D_{\ell}=D_{\ell-1}\cap \{|\zeta_\tau e^{-S_{t_\ell+t_\theta}}|\leq c_{\ell}|\zeta_\tau \mathcal{M}_{1}\cdots \mathcal{M}_{\ell}|+e^{-10^4(t_{\ell}-t_{\ell-1})}\}
\end{align*}
where {$c_\ell=\prod_{i=1}^\ell (1+e^{-(t_{i-1}+t_\theta)})$}, and $\mathfrak{s}_0=(3/4)\mathfrak{s}$. At $\ell=0$, we take
\begin{align*}
    A_0=B_0=C_0=D_0=[T,2T] \text{ (the sample space of $\tau$)}.
\end{align*}
Our good events are then defined as
$$
G_\ell = A_\ell\cap B_\ell\cap C_\ell \cap D_\ell, \quad \ell \geq 0. 
$$
{The choice of $\mathfrak{s}_0$ in the definition of $B_\ell$ and $C_\ell$ ensures that certain exponents are small enough in the proofs below, namely those in Equation \eqref{eq:BCconstraint1}, and \eqref{eq:BCconstraint}--\eqref{eq:BCconstraintf}. There is some flexibility in this choice; see \cite[Equations (19)--(27)]{arguinbailey1} for more detail.
}

\bigskip
Letting 
\[
    G_{A} = \{{\mathscr{S}_k\in[L_A(k), U_A(k)] \,\forall k\leq t_*}\}, \quad {t_*:=(t-t_\theta)/2}.
\]
and
\[
    H^\theta = \{\log |\zeta_\tau e^{-S_{t_\theta}}|>V\},
\]
 we proceed similarly to how we did in Equation \eqref{recursiveschemedecomposition} and partition $H^\theta \cap G_A$ into
 \[
    H^\theta \cap G_A = (H^\theta \cap G_A\cap G_1^c)\cup \bigcup_{\ell=1}^{\mathcal{L}-1}\, (H^\theta \cap G_A\cap G_\ell\setminus G_{\ell+1}) \,\cup (H^\theta \cap G_A\cap G_\mathcal{L}),
\]
for $\mathcal{L}$ defined in Equation \eqref{finalincrement}. The theorem then follows by a union bound and the following three propositions. 
\begin{proposition}\label{baseLD}
    Let $\theta\in (-1,0]$. Then there exists  $\delta>0$ such that uniformly in large enough $t$ and $t_*-t_*^{2/3}\leq V/2\leq U_A(t_*)$, 
    \[
        \mathbb{P}\big\{H^\theta\cap G_A \cap G_1^c\big\} \ll \frac{A\big(U_A({t_*})-{V/2}+\sqrt{t_*}\big)}{{t_*}}\frac{e^{-V^2/(t-t_\theta)}}{\sqrt{t-t_\theta}}(t-t_\theta)^{-\delta}.
    \]
\end{proposition}
\begin{proposition}\label{indLD} Let $\theta\in (-1,0]$ and $1\leq \ell \leq \mathcal{L}-1$. Then there exists  $\delta>0$ such that uniformly in $\ell$, large enough $t$ and $t_*-t_*^{2/3}\leq V/2\leq U_A(t_*)$, 
\[
    \mathbb{P}\big\{H^\theta\cap G_A\cap G_\ell\setminus G_{\ell+1}\big\} \ll \frac{A\big(U_A({t_*})-{V/2}+\sqrt{t_*}\big)}{{t_*}}\frac{e^{-V^2/(t-t_\theta)}}{\sqrt{t-t_\theta}}\big(\log_\ell (t-t_\theta)\big)^{-\delta}.
\]
\end{proposition}
\begin{proposition}\label{endLD}Let $\theta\in (-1,0]$. Then uniformly in large enough $t$ and $t_*-t_*^{2/3}\leq V/2\leq U_A(t_*)$, 
\[
    \mathbb{P}\big\{H^\theta\cap G_A \cap G_\mathcal{L}\big\}\ll \frac{A\big(U_A({t_*})-{V/2}+\sqrt{t_*}\big)}{{t_*}}\frac{e^{-V^2/(t-t_\theta)}}{\sqrt{t-t_\theta}}.
\]
\end{proposition}
{
 \begin{proof}[Proof of Theorem \ref{conditionLD}] Using the decomposition
 \begin{align*}
         &\mathbb{P}\big\{H^\theta \cap G_A\big\} \\
         &= \mathbb{P}\big\{H^\theta\cap G_A \cap G_1^c\big\}+\sum_{\ell=1}^{\mathcal{L}-1}\mathbb{P}\big\{H^\theta\cap G_A\cap G_\ell\setminus G_{\ell+1}\big\}+\mathbb{P}\big\{H^\theta\cap G_A\cap G_{\mathcal{L}}\big\},
 \end{align*}
 the theorem follows from the propositions above, noting that the factors $(\log_\ell (t-t_\theta))^{-\delta}$ are summable (see \eqref{eqn: summable}).
 \end{proof}
 }
A few heuristics are in order prior to proving these results. As in Section \ref{upperboundsection}, we think of $\log|\zeta_\tau e^{-S_{t_\theta}}|$ as the partial sum $\mathscr{S}_{t-t_\theta}$, which can itself be thought of as a Gaussian random walk of length $t-t_\theta$. The probability that such a walk's endpoint is in $[V,V+1]$ should roughly equal $e^{-V^2/(t-t_\theta)}/\sqrt{t-t_\theta}$.

The additional factor to the left of this Gaussian term in Propositions \ref{baseLD}, \ref{indLD} and \ref{endLD} above arises from the event $G_A$, which restricts $\mathscr{S}_k$ to lie under a barrier (starting at $A>0$) up to time {$t_*$}. Were ${\mathscr{S}_{k}}$ Gaussian, the probability of this happening and $\mathscr{S}_{t_*}\approx u$ is
\[
   \ll \frac{A\big(U_A({t_*})-{u}+1\big)}{{t_*}^{3/2}}e^{-u^2/{t_*}},
\]
by {the ballot theorem in Proposition \ref{prop:ballot}}. Being supported on different primes, $\mathscr{S}_{{t_*}}$ and $\mathscr{S}_{t-t_\theta}-\mathscr{S}_{{t_*}}$ should behave independently, and if viewed as Gaussian increments, the probability of $H^\theta \cap G_A$ should therefore be 
\[
    \ll \int_{u< U_A(t_*)} {\frac{A\big(U_A({t_*})-u+1\big)}{{t_*}}\frac{e^{-u^2/{t_*}}}{\sqrt{{t_*}}} \frac{e^{-(V-u)^2/({t_*})}}{\sqrt{{t_*}}}\mathrm{d}u. }
\]
This integral can then be estimated directly. By completing the square and using the substitution $v=u-V/2$, this is
\begin{align}\label{eq:partialbarrier}
        &=\frac{e^{-V^2/(t-t_\theta)}}{\sqrt{t_*}}\frac{A}{t_*} \int^{U_A(t_*)-V/2}_{-\infty}  \frac{e^{-2v^2/t_*}}{\sqrt{t_*}}\big(U_A(t_*)-V/2-v+1\big)\mathrm{d}v\nonumber\\
&\leq \frac{e^{-V^2/(t-t_\theta)}}{\sqrt{t_*}}\frac{A}{t_*}  \Big(2\big(U_A(t_*)-V/2+1\big)+\int_{0}^{\infty}\frac{ve^{-2v^2/t_*}}{\sqrt{t_*}}\mathrm{d}v\Big) \\
&\leq \frac{e^{-V^2/(t-t_\theta)}}{\sqrt{t_*}}\frac{A}{t_*}  \Big(2\big(U_A(t_*)-V/2+1\big)+\mathbb{E}\big\{|X|\big\}\Big)\nonumber
\end{align}
where $X$ is a Gaussian random variable with variance $t_*/4$. Since $\mathbb{E}\big\{|X|\big\}\leq \sqrt{t_*}\asymp \sqrt{t-t_\theta}$, we recover the right-hand side in Theorem \ref{conditionLD}. The remainder of this section is devoted to making this argument rigorous. 



\subsection{Proof of Proposition~\ref{baseLD}}
\label{sec6.1}

A union bound gives
\begin{align*}
&\mathbb{P} \bigl\{H^\theta\cap G_{1}^{c}\cap
G_{A} \bigr\}
\\
&\quad \ll \mathbb{P} \bigl\{A_{1}^{c} \bigr\}+\mathbb{P}
\bigl\{A_{1} \cap D_{1}^{c} \bigr\}+
\mathbb{P} \bigl\{B_{1}^{c}\cap G_{A}
\bigr\}+\mathbb{P} \bigl\{H^\theta\cap C_{1}^{c}\cap
A_{1}\cap D_{1}\cap G_{A} \bigr\},
\end{align*}
We show that each of these terms satisfies the bound in \ref{baseLD}. To begin with, the Gaussian tail bound in Corollary~\ref{cor:tails} gives
$\mathbb{P}  \{A_{1}^{c}  \}\ll \sqrt{t_{1}-t_0}\cdot \exp (-
10^6(t_{1}-t_0))$, which is clearly of the desired order for some
$\delta >0$.

 To bound $\mathbb{P}  \{A_{1}\cap D_{1}^{c}  \}$, we first note that
$A_{1}\cap \{|\zeta _{\tau}| \leq e^{2t}\}\subset A_{1}\cap D_{1}$, since
Lemma~\ref{mollifying} (whose assumptions are satisfied on
$A_{1}$) implies that
\begin{equation*}
\begin{split}\bigl| \zeta _{\tau }e^{-(\mathscr{S}_{t_1}-\mathscr{S}_{t_0})} \bigr| &\leq
c_{1} |\zeta _{
\tau }\mathcal{M}_{1}
| +e^{2t-10^{5}(t_{1}-t_{0})}
\\
&\leq c_{1} | \zeta
_{
\tau }\mathcal{M}_{1} | +e^{-{10^4}(t_{1}-t_{0})}.
\end{split}\end{equation*}
It follows that
\begin{equation*}
\mathbb{P} \bigl\{A_{1}\cap D_{1}^{c}
\bigr\} \leq \mathbb{P} \bigl\{ | \zeta _{\tau } |
>e^{2t} \bigr\} \leq e^{-4t}\mathbb{E} \bigl\{ |
\zeta _{\tau } | ^{2} \bigr\},
\end{equation*}
which $\leq e^{-3t}$ by Lemma~\ref{secondmomentzeta}, and thus small enough.

The bound on $\mathbb{P}\{B_1^c\cap G_A\}$ follows from Markov's inequality and Equation \eqref{eq:gaussianmoment}, as they together give
\begin{equation}\label{eq:BCconstraint1}
    \mathbb{P}\{B_1^c\}\ll \frac{\mathbb{E}\big\{|\mathscr{S}_{t_1}|^{2q}\big\}}{(\kappa t_1+\mathfrak{s}_0\log(t-t_\theta))^{2q}} \ll \frac{e^{-V^2/(t-t_\theta)}}{\sqrt{t-t_\theta}} (t-t_\theta)^{1+\kappa^2 \mathfrak{s}-2\kappa \mathfrak{s}_0},
\end{equation}
and $1+\kappa^2\mathfrak{s}-2\kappa\mathfrak{s}_0=1-10^6+o(1)\leq -10^5$.



Lastly, to estimate
$\mathbb{P}  \{H^\theta\cap C_{1}^{c}\cap A_{1}\cap D_{1}\cap G_{A}
  \}$, we partition according to the value of
$\mathscr{S}_{t_{1}}$ (in $A_{1}\cap C_{1}^{c}$), to get the bound
\begin{equation*}
\leq \sum_{-10^3(t_{1}-t_{0})<u<\kappa t_{1}-\mathfrak{s}_0\log t_{1}} \mathbb{P} \Bigl\{ \bigl\{
\mathscr{S}_{t_{1}}\in (u,u+1], \bigl| \zeta _{
\tau }e^{-S_{t_{1}+t_{\theta }}}
\bigr| >e^{V-u-1} \bigr\} \cap D_{1}\Bigr\}.
\end{equation*}
A short proof by contradiction (see Section~\ref{lowerextsection}) shows that if
$|\zeta _{\tau }e^{-S_{t_{1}+t_{\theta}}}|>e^{V-u-1}$ and
$h\in C_{1}^{c}\cap A_{1}\cap D_{1}$, then 
$|\zeta _{\tau }\mathcal{M}_{1}|>\frac {1}{100}e^{V-u}$. Markov's inequality then implies that the previous sum is
%
\begin{align}
\label{sumCbound}
\begin{split} &\leq \sum
_{u} \mathbb{P} \Bigl\{ \big\{| \zeta _{\tau }
\mathcal{M}_{1} | > \frac {1}{100}e^{V-u}\big\}
\cap \bigl\{\mathscr{S}_{t_{1}}\in (u,u+1]\bigr\}\Bigr\}
\\
&\ll \sum_{u} e^{-4(V-u)} \mathbb{E}
\Bigl\{ | \zeta _{\tau } \mathcal{M}_{1} |
^{4}\mathbf{1} \bigl\{\mathscr{S}_{t_{1}}\in (u,u+1]\bigr\}\Bigr\}. \end{split} 
\end{align}
The sum over $u<0$ is $\ll te^{-4V}\ll e^{-3t_*}$ and can thus safely be ignored. For the remaining range, note that it follows
directly from the proof of Lemma~\ref{lem4} that
\begin{align*}
\mathbb{E} \bigl\{ | \zeta _{\tau }\mathcal{M}_{1}
| ^{4}\mathbf{1} ({\mathscr{S}_{t_{1}}\in
\bigl(u,u+1]} \bigr) \bigr\}
\quad \ll e^{4(t-t_{\theta}-t_{1})} \frac{e^{-u^{2}/{t_1}}}{\sqrt{{t_1}}}
\end{align*}
for every $0\leq u\leq 2t_1$. Applying this to the remaining terms in the summation
in (\ref{sumCbound}), we find that it is
\begin{equation*}
\ll \sum_{0
\leq u<\kappa t_{1}-\mathfrak{s}_0\log t_{1}}
e^{4((t-t_{\theta})-t_{1})}e^{-4(V-u)} \frac{e^{-u^{2}/t_{1}}}{\sqrt{t_{1}}}.
\end{equation*}
Using the change of variable $w=\kappa t_{1}-u$, the summation above is
bounded by
%
\begin{align}
\begin{split}\label{eq:Cconstraint1} &\frac{e^{-\kappa ^{2} t_{1}}}{\sqrt{t_{1}}}e^{4(1-\kappa )((t-t_{
\theta})-t_{1})} \sum
_{w>\mathfrak{s}_0 \log (t-t_{\theta})} e^{-(4-2
\kappa )w}
\\
&\quad \ll \frac{e^{-V^{2}/(t-t_{\theta})}}{\sqrt{t-t_{\theta}}} (t-t_{
\theta})^{\mathfrak{s}\kappa^{2}+\mathfrak{s}(4-4\kappa )-2(2-\kappa )\mathfrak{s}_0}.
\end{split}
\end{align}
This yields the claim since the exponent of $(t-t_\theta)$ is bounded by $-10^5$.

\subsection{Proof of Propositions \ref{indLD} and \ref{endLD}}
These propositions follow from the same argument used to prove Propositions 2.2 and 2.3 in \cite{arguinbailey1}, suitably modified to include the event $G_A$. This modification simply amounts to replacing their Lemmas 2.6 and 2.7 with Lemmas \ref{lem:6.5} and \ref{lem:6.6} below, {respectively}, which play the role of Lemmas \ref{lem3} and \ref{lem4} in Section \ref{upperboundsection}. 

\begin{lemma}\label{lem:6.5}Let $\ell \geq 1$ satisfy (\ref{finalincrement}). Let $\mathcal{Q}$ be a Dirichlet polynomial of length $N \leq \exp(\tfrac{1}{100}e^t)$ supported on integers all of whose prime factors are greater than $\exp(e^{t_\theta+t_\ell})$. Then for $|w-\kappa t_\ell|\leq \mathfrak{s}_0\log_\ell (t-t_\theta)$
and large enough $t$, we have
\begin{align*}
    \mathbb{E}\big\{|\mathcal{Q}(\tfrac{1}{2}+i\tau)|^2\mathbf{1}&\big(B_\ell\cap C_\ell\cap {G_A}\,\cap\{\mathscr{S}_{t_\ell}\in (w,w+1]\}\big)\big\} \\
    &\ll \E{|\mathcal{Q}(\tfrac{1}{2}+i\tau)|^2}\cdot \frac{A\big(U_A(t_*)-{w(t_*/t_\ell)}+\sqrt{t_*})}{{t_*}}\frac{{e^{-w^2/t_{\ell}}}}{\sqrt{t_\ell}}.
\end{align*}
\end{lemma}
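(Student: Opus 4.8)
The plan is to follow the proof of lemma \ref{lem3} almost verbatim, with two changes: the factor arising from $E^\theta(u_0)$ is simply deleted (this is a one-point estimate, at $s=\tfrac{1}{2}+i\tau$), and the barrier event $\{\mathscr{S}_k\le U_A(k)\ \forall k\le t_1\}$ is folded into the combinatorial data describing the admissible walks. First I would write the indicator in terms of the increments $\mathscr{Y}_j=\mathscr{S}_j-\mathscr{S}_{j-1}$ and discretize each $\mathscr{Y}_j$ at scale $\Delta_j^{-1}$ with $\Delta_j=\min(t_\theta+j,t(1+\theta)-j)^4$. Exactly as in lemma \ref{indicatordirichlet}, this produces the pointwise bound
\[
\mathbf{1}\big(B_\ell\cap C_\ell\cap\{\mathscr{S}_{t_\ell}\in(w,w+1],\ \mathscr{S}_k\le U_A(k)\ \forall k\le t_1\}\big)\le C\sum_{\mathbf{u}\in\mathcal{J}}\prod_{j=1}^{t_\ell}\big|\mathcal{D}_{\Delta_j,A}(\mathscr{Y}_j-u_j)\big|^2,
\]
where $\mathcal{J}$ is the set of tuples $(u_1,\dots,u_{t_\ell})$, $u_j\in\Delta_j^{-1}\mathbb{Z}$, whose partial sums $\sigma_j=\sum_{i\le j}u_i$ stay below $U_A(j)+1$ for every $j\le t_1$, lie in $[\kappa t_r-\mathcal{C}\log_r t(1+\theta)-1,\,\kappa t_r+\mathcal{B}\log_r t(1+\theta)+1]$ at each time $t_r$ with $r<\ell$, and land within $O(1)$ of $w$ at $t_\ell$; the increments of any admissible tuple obey $|u_j|\ll\Delta_j^{1/4}$ just as before.

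Next I would multiply by $|\mathcal{Q}(\tfrac{1}{2}+i\tau)|^2$, take expectations, and expand the right-hand side as a bounded linear combination of Dirichlet polynomials, each of length at most $\exp\big(100\,e^{t_\ell+t_\theta}(t(1+\theta)-t_\ell)^{800}\big)\le\exp(e^t/100)$ by condition (\ref{finalincrement}). Since $\mathcal{Q}$ is supported on primes $>\exp(e^{t_\theta+t_\ell})$ while each $\mathcal{D}_{\Delta_j,A}$ is supported on primes $\le\exp(e^{t_\theta+t_\ell})$, lemma \ref{splittinglemma} factors out $\E{|\mathcal{Q}(\tfrac{1}{2}+i\tau)|^2}$. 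Then the Gaussian comparison of lemma \ref{gaussiancomparison} (with the $S_{t_\theta}(0)-S_{t_\theta}$ factor removed, as it does not occur here) bounds what remains by a constant times
\[
\P{\mathcal{W}_k\le U_A(k)+O(1)\ \forall k\le t_1,\quad \mathcal{W}_{t_\ell}\in[w-O(1),w+O(1)]},
\]
where $\mathcal{W}$ is a mean-zero Gaussian random walk with i.i.d.\ increments of variance $1/2$; the wide intermediate windows at the $t_r$ with $1<r<\ell$ only lower this probability and may be discarded.

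Finally I would estimate this Gaussian probability by conditioning on $\mathcal{W}_{t_1}$ and splitting at time $t_1$ via the Markov property. The ballot theorem of \cite{FHK1} (proposition 4) applies to the portion up to $t_1$, the barrier $U_A$ being nearly linear with an admissible $10^3\log(t(1+\theta)-k)$ correction; keeping the mild window at $t_1$, which pins $\mathcal{W}_{t_1}$ near $\kappa t_1$, this portion contributes, for $\mathcal{W}_{t_1}$ near a value $v$, a factor $\ll A\big(U_A(t_1)-v\,(t_1/t_\ell)\big)\,t_1^{-3/2}\,e^{-v^2/t_1}$, and the free increment on $[t_1,t_\ell]$ contributes $\ll(t_\ell-t_1)^{-1/2}e^{-(w-v)^2/(t_\ell-t_1)}$. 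Carrying out the Gaussian convolution over $v$ collapses the two exponentials into $e^{-w^2/t_\ell}$ and the two polynomial factors into $t_1^{-3/2}$, and evaluating the clearance at the saddle point $v=w\,(t_1/t_\ell)=V(t_1/t(1+\theta))+O(\log_\ell t(1+\theta))$ yields the asserted bound (here $U_A(k)$ in the statement should read $U_A(t_1)$); the hypothesis $w\in[\kappa t_\ell-\mathcal{C}\log_\ell t(1+\theta),\kappa t_\ell+\mathcal{B}\log_\ell t(1+\theta)]$ is exactly what makes the endpoint compatible with the ballot theorem.

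I expect the genuinely new point over \cite{arguinbailey1} to be this last ballot step, where the barrier extends only up to the intermediate time $t_1$ rather than to $t_\ell$: because $t_1\le t_\ell$, the barrier constraint sits inside the full interval, so the Markov property at $t_1$ reduces it to the standard one-sided ballot theorem followed by an unconstrained Gaussian step, and no new ballot theorem is required — the care needed is only in checking that the clearance $U_A(t_1)-V(t_1/t(1+\theta))$ is indeed the quantity the ballot theorem produces and that it is not destroyed by the convolution or swallowed by the width of the $t_1$-window. The remaining ingredients — the length bookkeeping for the Dirichlet polynomials and the handling of the intermediate windows — are routine and identical to the corresponding steps in the proof of lemma \ref{lem3}.
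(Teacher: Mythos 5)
Your proposal is correct and takes essentially the same route as the paper: you convert the constrained-walk indicator into a sum of products of the paper's Dirichlet-polynomial mollifiers $\mathcal{D}_{\Delta_j,A}$, factor out $\E{|\mathcal{Q}|^2}$ via lemma \ref{splittinglemma}, pass to Gaussian increments via lemma \ref{gaussiancomparison}, and then split at $t_1$ and convolve. The only (cosmetic) organizational difference is the order of operations: the paper first partitions over the integer value $u$ of $\mathscr{S}_{t_1}$ and then applies the lemma \ref{lem3} machinery to each fixed-$u$ term, whereas you apply the machinery to the whole indicator and then invoke the Markov property at $t_1$ to decompose the resulting Gaussian probability; the final convolution calculation $\sum_u A\big(U_A(t_1)-u\big)t_1^{-3/2}e^{-u^2/t_1}e^{-(V-u)^2/(t(1+\theta)-t_1)}$ (and the observation that the mean-zero Gaussian averaging leaves the clearance $U_A(t_1)-\kappa t_1$ intact) is identical, and you correctly note the $U_A(k)\mapsto U_A(t_1)$ typo in the statement.
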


\begin{proof}
    Partitioning according to the value of $\mathscr{S}_{t_*}$, the right-hand side in the lemma above is bounded by 
    \begin{align*}
        \sum_{u\in [{L_A(t_*),U_A(t_*)}]\cap \mathbb{Z}} \mathbb{E}\big\{|\mathcal{Q}(\tfrac{1}{2}+i\tau)|^2\mathbf{1}\big(B_\ell \,\cap\, &C_\ell\,\cap\{\mathscr{S}_{t_\ell}-\mathscr{S}_{{t_*}}\in (w-u,w-u+1]\big)\, \\
        &\times\mathbf{1}\big(\{\mathscr{S}_{{t_*}}\in [u,u+1]\}\, {\cap\, G_A}\big)\big\}.
    \end{align*}
    Each term in the summation above can then be bounded as in the proof of Lemma \ref{lem3} (see also Lemma 2.6 in \cite{arguinbailey1}). Namely, one expresses the indicators in the expectation as a sum over tuples $(u_j)_j$ of products of the form $\prod_{\mathscr{Y}_j} \mathbf{1}(\mathscr{Y}_j\in[u_j,u_j+\Delta_j^{-1}])$, where $\mathscr{Y}_j$ ranges over $$\mathscr{Y}_j\in\{\mathscr{S}_{r}\}\cup\{\mathscr{S}_{j}-\mathscr{S}_{j-1}\}_{r<j\leq t_*}\cup\{\mathscr{S}_{t_1}-\mathscr{S}_{t_*}\}\cup\{\mathscr{S}_{t_k}-\mathscr{S}_{t_{k-1}}\}_{1<k\leq \ell},$$ for $r=\lceil A/4\rceil$ and a suitable choice of mesh size $\Delta_j$.
    After approximating each of these indicators by a Dirichlet polynomial $\mathcal{D}_{\Delta_j,A}$ (cf.\ Equation \eqref{eqn: D}), one uses Lemma \ref{splittinglemma} to decouple the second moment of their product from $\mathbb{E}\{|\mathcal{Q}(1/2+i\tau)|^2\}$, and Lemma \ref{MVMV} to bound said second moment by $\prod_j \mathbb{E}\{|\mathcal{D}_{\Delta_j,A}(\mathcal{Y}_j-u_j)|^2\}$, where $\mathcal{Y}_j$ is a Steinhaus model for $\mathscr{Y}_j$ (cf.\ Equations \eqref{eq:Steinhaus} and \eqref{eq:Steinhaus2}). Finally, undoing the Dirichlet polynomial approximation yields a product of probabilities, which are then bounded by a Gaussian counterpart using the Berry-Esseen theorem\footnote{In the $\theta=0$ case, one uses the estimate in \cite[Lemma 18]{FHK1} for the probabilities involving the first increment $\mathscr{S}_r=S_r$ rather than the Berry-Esseen bound.}. Altogether, this procedure yields the bound
    \[
       \ll \sum_{u\in [{L_A(t_*),U_A(t_*)}]\cap \mathbb{Z}} \frac{A(U_A({t_*})-{u}+1)}{{t_*}}\frac{e^{-u^2/{t_*}}}{\sqrt{{t_*}}}\frac{e^{-(w-u)^2/(t_\ell-{t_*})}}{\sqrt{t_\ell-{t_*}}},
    \]
    which by the change of variables $u={w(t_*/t_\ell)+v}$ is
    \begin{align*}
        &\ll \sum_{\substack{v+w(t_*/t_\ell)\\\in \mathbb{Z}\cap {[L_A(t_*),U_A(t_*)]}}}
        \frac{A({U_A(t_*)-w(t_*/t_\ell)-v+1})}{{t_*}}\frac{e^{-({w(t_*/t_\ell)+v})^2/{t_*}}}{\sqrt{{t_*}}}\frac{e^{-\tfrac{({w-w(t_*/t_\ell)-v})^2}{{t_\ell-t_*}}}}{\sqrt{{t_\ell-t_*}}}\\
        &= A\frac{e^{-w^2/t_\ell}}{t_*^{3/2}}\sum_{\substack{v+w(t_*/t_\ell)\\\in \mathbb{Z}\cap {[L_A(t_*),U_A(t_*)]}}} ({U_A(t_*)-w(t_*/t_\ell)-v+1})\frac{e^{{-v^2(t_\ell/t_*(t_\ell-t_*))}}}{\sqrt{{t_\ell-t_*}}}.
    \end{align*}
    {This sum can then be estimated by comparison with the corresponding Gaussian integral. In this range of $w$, this is $\ll {U_A(t_*)-w(t_*/t_\ell)}+\sqrt{t_*}$ by completing the square as in \eqref{eq:partialbarrier}.}
\end{proof}
\noindent Combining the same argument with Lemma \ref{twisted} also yields the following.
\begin{lemma}\label{lem:6.6}
    Let $\ell\geq 1$ satisfy (\ref{finalincrement}). Then for $|w-\kappa t_\ell|\leq \mathfrak{s}_0\log_\ell (t-t_\theta)$
and large enough $t$, we have
\begin{align*}
    \mathbb{E}\big\{|\zeta_\tau \mathcal{M}_{1}\cdots \mathcal{M}_{\ell}|^4&\mathbf{1}\big({G_A}\cap B_\ell\cap C_\ell, \mathscr{S}_{t_\ell}\in[w,w+1]\big)\big\} \\&\ll e^{4((t-t_\theta)-t_\ell)}\cdot \frac{A\big({U_A(t_*)-w(t_*/t_\ell)+\sqrt{t_*}}\big)}{{t_*}}\frac{{e^{-w^2/t_{\ell}}}}{\sqrt{t_\ell}}.
\end{align*}
\end{lemma}

\begin{proof}[Proof of Proposition \ref{indLD}]{By the proof of Proposition 2 in \cite{arguinbailey1}, replacing their Lemmas 2.6 and 2.7 with our Lemmas \ref{lem:6.5} and \ref{lem:6.6}, we get the following estimates:
\begin{align}\label{eq:BCconstraint}
    &\P{G_A\cap A_{\ell+1}^c\cap G_\ell}\ll \frac{A\big({U_A(t_*)-V/2+\sqrt{t_*}}\big)}{{t_*}}\frac{e^{-V^2/(t-t_\theta)}}{\sqrt{t-t_\theta}}(\log_\ell(t-t_\theta))^{\kappa^2\mathfrak{s}-10^6\mathfrak{s}+2\kappa\mathfrak{s}_0},\\
     &\P{ G_A\cap B_{\ell+1}^c\cap G_\ell}\ll \frac{A\big({U_A(t_*)-V/2+\sqrt{t_*}}\big)}{{t_*}}\frac{e^{-V^2/(t-t_\theta)}}{\sqrt{t-t_\theta}}(\log_\ell(t-t_\theta))^{1/2+\kappa^2\mathfrak{s}-2\kappa\mathfrak{s}_0},
\end{align}
\begin{align}
    &\mathbb{P}\,\{H^\theta \cap  G_A\cap  C^{c}_{\ell+1}\cap A_{\ell+1}\cap D_{\ell+1}\cap G_\ell\}\\
    &\hspace{20px}\ll \frac{A\big({U_A(t_*)-V/2+\sqrt{t_*}}\big)}{{t_*}}\frac{e^{-V^2/(t-t_\theta)}}{\sqrt{t-t_\theta}}(\log_\ell(t-t_\theta))^{\mathfrak{s}(2-\kappa)^2-2(2-\kappa)\mathfrak{s}_0+1/2},\nonumber
\end{align}
and finally
\begin{align}
          &\P{ G_A\cap D_{\ell+1}^c\cap A_{\ell+1}\cap G_\ell} \label{eq:BCconstraintf}\\
          &\hspace{20px}\ll \frac{A\big({U_A(t_*)-V/2+\sqrt{t_*}}\big)}{{t_*}}\frac{e^{-V^2/(t-t_\theta)}}{\sqrt{t-t_\theta}}(\log_{\ell-1}(t-t_\theta))^{\kappa^2 \mathfrak{s}+2\kappa\mathfrak{s}_0-4\mathfrak{s}(10^3-1)} \nonumber
\end{align}
for any $\ell\geq 1$.
In each of these, the exponent of the rightmost term is negative upon substituting $\mathfrak{s}=2\cdot 10^6$ and $\mathfrak{s}_0=(3/4)10^6$. They therefore imply Proposition \ref{indLD} by the union bound
\begin{align*}
    &\P{H^\theta \cap G_A \cap(G_\ell\setminus G_{\ell+1})}\leq \P{G_A\cap A_{\ell+1}^c\cap G_\ell}+\P{ G_A\cap B_{\ell+1}^c\cap G_\ell} \\
    &\hspace{40px}+\mathbb{P}\,\{H^\theta \cap  G_A\cap  C^{c}_{\ell+1}\cap A_{\ell+1}\cap D_{\ell+1}\cap G_\ell\}+\P{ G_A\cap D_{\ell+1}^c\cap A_{\ell+1}\cap G_\ell}.
\end{align*}}
\end{proof}
\begin{proof}[Proof of Proposition \ref{endLD}] Since $\P{H^\theta\cap G_A\cap G_\mathcal{L}}\leq \P{G_A\cap G_\mathcal{L}}$, it suffices to show that the latter satisfies the desired bound. Partitioning on the value of $u\in \mathbb{Z}$ of $\mathscr{S}_{t_\mathcal{L}}$, we have that $\P{G_A \cap G_\mathcal{L}}$ is  
\begin{align*}
    \leq \sum_{-\mathfrak{s}_0\log_\mathcal{L}(t-t_\theta)<u-\kappa t_\mathcal{L}<\mathfrak{s}_0\log_\mathcal{L}(t-t_\theta)} \mathbb{E}\big\{\mathbf{1}\big(B_{\mathcal{L}}\cap C_{\mathcal{L}}\cap {G_A}\,\cap\{\mathscr{S}_{t_{\mathcal{L}}}\in (u,u+1]\}\big)\big\}.
\end{align*}
By Lemma \ref{lem:6.5} (with $\mathcal{Q}\equiv 1$), this is
\begin{align*}
    \ll \sum_{-\mathfrak{s}_0\log_\mathcal{L}(t-t_\theta)<u-\kappa t_\mathcal{L}<\mathfrak{s}_0\log_\mathcal{L}(t-t_\theta)}  \frac{A\big({U_A(t_*)-u(t_*/t_\mathcal{L})+\sqrt{t_*}}\big)}{{t_*}}\frac{e^{-u^2/t_\mathcal{L}}}{\sqrt{t_\mathcal{L}}}.
\end{align*}
Recalling that $\log_\mathcal{L}(t-t_\theta)=O(1)$ (cf. Equation \eqref{finalincrement}), a change of variables $w=u-\kappa t_\mathcal{L}$ in the previous sum yields
\begin{align*}
&\ll \frac{e^{-\kappa^2(t-t_\theta)}}{\sqrt{t-t_\theta}}\sum_{-\mathfrak{s}_0\log_\mathcal{L}(t-t_\theta)<w<\mathfrak{s}_0\log_\mathcal{L}(t-t_\theta)} \frac{A\big({U_A(t_*)-w(t_*/t_\mathcal{L})-V/2+\sqrt{t_*}}\big)}{{t_*}}e^{-2\kappa w},
\end{align*}
and the sum is ${\ll A(U_A(t_*)-V/2+\sqrt{t_*})/t_*}$. 
\end{proof}

\begin{appendix}
\bigskip 

\section{Discretization}
{The following lemma allows us to discretize the maximum of a Dirichlet polynomial over a continuous interval to a countable mesh of points, and is essentially a consequence of the Poisson summation formula. }
\begin{lemma}[Lemma 27 in \cite{FHK1}]\label{disclemma}
    Let $\mathcal{I}$ be a finite set of indices and $D_i$ with $l\in \mathcal{I}$ be a sequence of Dirichlet polynomials of length $\leq N$. Then for any $\ell\geq 1, A\geq 100$, 
        \begin{align*}
        &\max_{|h|\leq 2\log^\theta T} \bigg(\sum_{l\in \mathcal{I}}|D_i(\tfrac{1}{2}+i\tau+ih)|^2\bigg) \\&\ll_A
        \sum_{{|j|\leq 16\log^{\theta}T \log N} }\bigg(\sum_{l\in \mathcal{I}} \Big|D_l\Big(\frac{1}{2}+i\tau+\frac{2\pi i j}{8\log N}\Big)\Big|^2 \bigg) \\&+\sum_{{|j|> 16\log^{\theta}T \log N} } \frac{1}{1+|j|^A} \bigg(\sum_{l\in \mathcal{I}} \Big|D_l\Big(\frac{1}{2}+i\tau+\frac{2\pi i j}{8\log N}\Big)\Big|^2 \bigg).
    \end{align*}
\end{lemma}
\noindent We also have the following discretization result for the maximum of the zeta function. It can be seen as a consequence of the previous lemma, since the zeta function is well-approximated by a Dirichlet polynomial of length $T$ (see, e.g., Proposition 2 in \cite{bombierifriedlander}).
\begin{lemma}[Lemma 26 in \cite{FHK1}]\label{disczeta}
    Fix $\theta\in (-1,0)$ and let $t_\theta=t|\theta|$. Let $\mathcal{T}_t^\theta$ be a set of $e^{-t-100}$ well-spaced points in $[-2e^{-t_\theta}, 2e^{-t_\theta}]$. There exists an absolute constant $C>1$ such that for any $V>1$, $A>100$
    \[
        \P{\max_{|h|\leq e^{-t_\theta}}|\zeta_\tau(h)e^{-S_{t_\theta}(0)}|>V}\leq \P{\max_{h\in \mathcal{T}_t^\theta}|\zeta_\tau(h)e^{-S_{t_\theta}(0)}|>V/C}+O_A(e^{-At}).
    \]
\end{lemma}

\section{Moments of the zeta function and of Dirichlet polynomials}
{The following moment (standard) estimate for Dirichlet polynomials follows from expanding $|\tilde{S}_k(h)-\tilde{S}_j(h)|^{2q}$  and using $\binom{q}{\alpha_1,...,\alpha_r}^2\leq q!\binom{q}{\alpha_1,...,\alpha_r}$ to bound the main contribution.}
\begin{lemma}\label{complexgaussian}Let $h\in [-2,2]$ and $\theta\in (-1,0)$ be arbitrary. Then for any positive integers $t|\theta|\leq j\leq k$ and $2q\leq e^{t-k}$, we have
\[
    \E{|\widetilde{S}_k(h)-\widetilde{S}_j(h)|^{2q}}\ll q!(k-j+1)^q.
\]
\end{lemma}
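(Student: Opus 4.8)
The plan is to pass to the random Euler‑product model via the mean value theorem and then run a standard moment computation, essentially as in the proof of Lemma~\ref{MV} (compare Lemma~16 of \cite{FHK1}). Fix $h\in[-2,2]$, write $P_k=\exp(e^k)$, and observe that $|\tilde S_k(h)-\tilde S_j(h)|^{2q}=\bigl|(\tilde S_k(h)-\tilde S_j(h))^q\bigr|^2$, where $(\tilde S_k(h)-\tilde S_j(h))^q=\sum_n a(n)\,n^{-i\tau}$ is a Dirichlet polynomial supported on integers $n$ that are products of $q$ factors, each a prime in $(P_j,P_k]$ or the square of such a prime; in particular its length is $N\le P_k^{2q}=\exp(2q\,e^k)\le\exp(e^t)=T$, by the hypothesis $2q\le e^{t-k}$. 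Lemma~\ref{MVMV} therefore gives
\[
\E{|\tilde S_k(h)-\tilde S_j(h)|^{2q}}=\bigl(1+O(N/T)\bigr)\sum_n|a(n)|^2\ll\sum_n|a(n)|^2=\E{|\mathcal{R}|^{2q}},
\]
where $\mathcal{R}:=\sum_{P_j<p\le P_k}\bigl(Z_p p^{-1/2-ih}+\tfrac12 Z_p^2 p^{-1-2ih}\bigr)$ with $(Z_p)_p$ independent and uniform on the unit circle: the substitution $p^{-i\tau}\mapsto Z_p$, $p^{-2i\tau}\mapsto Z_p^2$ identifies $\sum_n a(n)Z_n$ with $\mathcal{R}^q$, and $\E{Z_n\overline{Z_m}}=\ind{n=m}$ yields $\sum_n|a(n)|^2=\E{|\mathcal{R}^q|^2}$.

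It remains to show $\E{|\mathcal{R}|^{2q}}\ll q!(k-j+1)^q$. Write $\mathcal{R}=\sum_p W_p$ with $W_p:=Z_p p^{-1/2-ih}+\tfrac12 Z_p^2 p^{-1-2ih}$ independent over $p$ and satisfying $\E W_p=\E{W_p^2}=0$; independence factorises
\[
\E{|\mathcal{R}|^{2q}}=\sum_{\vec a,\vec b}\binom{q}{\vec a}\binom{q}{\vec b}\prod_p\E{W_p^{a_p}\overline{W_p}^{b_p}},
\]
the sum being over $\vec a=(a_p)_p,\vec b=(b_p)_p$ with $\sum_p a_p=\sum_p b_p=q$. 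A direct evaluation shows that $\E{W_p^{a_p}\overline{W_p}^{b_p}}$ vanishes unless $|a_p-b_p|\le\min(a_p,b_p)$, that $\E{|W_p|^{2m}}=p^{-m}\bigl(1+O(m^2/p)\bigr)$, and that for $a\neq b$ it has size $\ll\binom{\min(a,b)}{|a-b|}(2p)^{-|a-b|}p^{-\min(a,b)}$; thus, compared with a ``balanced'' appearance $a_p=b_p$, each mismatch of multiplicities at a prime $p$ contributes an extra factor $\ll|a_p-b_p|/p$. Since $j\ge t/2$ forces $p\ge P_j=\exp(e^j)\ge\exp(e^{t/2})$, which is astronomically larger than $q\le e^t$, the products of the $\bigl(1+O(m_p^2/p)\bigr)$ are $1+o(1)$ (as $\sum_p m_p^2/p\le q^2/P_j=o(1)$), and the off-diagonal part is negligible because it forces at least two ``unbalanced'' prime factors, hence involves the convergent sum $\sum_{P_j<p\le P_k}p^{-2}=O(P_j^{-1})=o(1)$ in place of the divergent-type $\sum_{P_j<p\le P_k}p^{-1}$. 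Hence the diagonal part dominates:
\[
\E{|\mathcal{R}|^{2q}}\le(1+o(1))\sum_{\vec m:\ \sum_p m_p=q}\binom{q}{\vec m}^2\prod_p p^{-m_p}\le(1+o(1))\,q!\Bigl(\sum_{P_j<p\le P_k}p^{-1}\Bigr)^q,
\]
using $\binom{q}{\vec m}^2\le q!\binom{q}{\vec m}$ and the multinomial theorem. Finally, by Mertens' estimate (equivalently the quantitative prime number theorem, cf.\ Theorem~6.9 of \cite{MontgomeryVaughanBook}) one has $\sum_{P_j<p\le P_k}p^{-1}=\log\log P_k-\log\log P_j+O(1/\log P_j)=(k-j)+O(e^{-j})$, so for $T$ large (whence $j\ge t/2$ is large) $\sum_{P_j<p\le P_k}p^{-1}\le k-j+1$, and combining the displays gives $\E{|\tilde S_k(h)-\tilde S_j(h)|^{2q}}\ll q!(k-j+1)^q$.

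The main obstacle is the off-diagonal estimate while keeping the implied constant independent of $q$: an exponential‑in‑$q$ loss cannot be tolerated, since in the applications of this lemma it is exactly what Stirling's formula cancels against. In particular one cannot split $W_p$ into its $p^{-1/2}$‑ and $p^{-1}$‑parts and apply Minkowski's inequality in $L^{2q}$, which loses a factor $\asymp 4^q$ (the correction term $\tfrac12\sum_p Z_p^2 p^{-1-2ih}$ has bounded variance but $L^{2q}$‑norm of order $\sqrt q$); one must instead run the direct moment expansion and exploit the enormous size of the primes in $(\exp(e^j),\exp(e^k)]$ to render the non‑Gaussian contribution genuinely lower order.
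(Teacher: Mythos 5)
Your main argument is correct but takes a genuinely different route: the paper's proof is a one-line citation of Lemma~3 of \cite{SoundMoments}, whereas you reprove the bound from scratch, passing to the random Euler-product model via Lemma~\ref{MVMV} and running a direct moment expansion. The diagonal gives $\le q!\bigl(\sum_{P_j<p\le P_k}p^{-1}\bigr)^q\ll q!(k-j+1)^q$, and your off-diagonal analysis --- while sketched somewhat loosely (the combinatorics of the unbalanced configurations are asserted rather than tracked) --- is morally sound: the constraint $j\ge t/2$ makes $P_j=\exp(e^j)$ so enormous relative to $q$ that all the non-Gaussian corrections are genuinely lower order.

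Your concluding ``obstacle'' paragraph, however, is wrong. You assert that splitting $W_p$ into its $p^{-1/2}$- and $p^{-1}$-parts and applying Minkowski loses a factor $\asymp 4^q$ because the correction $\tfrac12\sum_p Z_p^2 p^{-1-2ih}$ ``has bounded variance'' with $L^{2q}$-norm of order $\sqrt q$. That would be so if the sum ran over all primes, but here it runs over $P_j<p\le P_k$ with $j\ge t/2$, so its variance is $\tfrac14\sum_{p>P_j}p^{-2}\ll P_j^{-1}=\exp(-e^j)$ and its $L^{2q}$-norm is $\ll\sqrt q\,P_j^{-1/2}$. Writing $\mathcal R=\mathcal R_1+\mathcal R_2$ and applying Minkowski then gives $\E{|\mathcal R|^{2q}}\le q!\bigl[(k-j+1)^{1/2}+O(P_j^{-1/2})\bigr]^{2q}\le q!(k-j+1)^q\exp\bigl(O(qP_j^{-1/2})\bigr)$, and since $qP_j^{-1/2}\le\tfrac12 e^{t/2}\exp(-e^{t/2}/2)=o(1)$, the loss is only a factor $1+o(1)$, not $4^q$. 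The hypothesis $j\ge t/2$ is precisely what makes the split-and-Minkowski route work, and it is in fact cleaner than your full off-diagonal expansion --- and presumably how the cited prime-sum estimate of \cite{SoundMoments} is meant to be combined with the prime-square correction.
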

\begin{proof}
    This is a corollary of Lemma 3 in \cite{SoundMoments}.
\end{proof}
\noindent We also require analogous estimates for the real part of $\tilde{S}_j$.
\begin{lemma}[Lemma 16 in \cite{FHK1}]\label{gaussianmoments} Let $h\in [-2,2]$ and $\theta\in (-1,0)$ be arbitrary. Then for any integers $t|\theta|\leq j\leq k$, and  $2q\leq e^{t-k}$ we have
    \begin{equation}\label{eq:gaussianmoment}
                \E{|S_k(h)-S_{j}(h)|^{2q}} \ll \frac{(2q)!}{2^qq!}\left(\frac{k-j}{2}\right)^{q},
    \end{equation}
    as well as 
    \begin{equation}\label{eq:gaussianmomentshifted}
                \E{|S_k(h)-S_{j}(h)+A|^{2q}} \ll \frac{(2q)!}{q!}\left(k-j\right)^{q}+(2A)^{2q}
    \end{equation}
    for any constant $A>0$.
    Moreover, there exists a constant $C>0$ such that for any $j<k$, $2q\leq e^{t-k}$ we have
    \[  
        \E{|S_k(h)-S_{j}(h)|^{2q}} \ll \sqrt{q}\frac{(2q)!}{2^qq!}\left(\frac{k-j+C}{2}\right)^{q}.
    \]
\end{lemma}
\noindent Using Markov's inequality and Stirling's approximation, these bounds yield the following Gaussian tail estimates. 
\begin{cor}[Gaussian tail for Dirichlet polynomials]\label{cor:tails} {Let $h\in [-2,2]$ and $\theta\in (-1,0)$ be arbitrary. Then for any $t|\theta|\leq j \leq k$ and $V^2\leq (k-j)e^{t-k}/2$,
\begin{align}\label{Gaussiantail}
        \mathbb{P}\Big\{|S_k(h)-S_j(h)|>V\Big\}&\ll \exp\bigg({-\frac{V^2}{k-j}}\bigg),\\
            \P{|\widetilde{S}_k(h)-\widetilde{S}_j(h)|>V}&\ll \frac{V}{(k-j+1)^{1/2}}\exp\left(-\frac{V^2}{k-j+1}\right).
\end{align}
}
\end{cor}
\begin{proof} {By Markov's inequality and the moment estimate in Lemma \ref{gaussianmoments}, we have
\[
    \P{|{S}_k(h)-{S}_j(h)|>V}\ll\frac{\E{|{S}_k(h)-{S}_j(h)|^{2q}}}{V^{2q}} \ll \frac{1}{V^{2q}}\frac{(2q)!}{2^qq!}\bigg(\frac{k-j}{2}\bigg)^q,
\]
which by Stirling's approximation is
\begin{equation}\label{eq:stirling}
    \ll \frac{1}{V^{2q}}\bigg(\frac{2q}{e}\bigg)^q\bigg(\frac{k-j}{2}\bigg)^q.
\end{equation}
Picking $q=\lceil V^2/(k-j)\rceil$, this simplifies to $\ll e^{-q}$ which is the bound in Equation \eqref{Gaussiantail}. The second bound is proved in the same way, using the moment estimate in Lemma \ref{complexgaussian} and picking $q=\lceil V^2/(k-j+1)\rceil$.}
\end{proof}
 We record the following mean-value theorem for Dirichlet polynomials (Lemma \ref{MVMV}), due to Montgomery and Vaughan \cite{MVMV}, and an immediate corollary of it which allows us to decouple averages of Dirichlet polynomials supported on disjoint ranges of primes (Lemma \ref{splittinglemma}).
\begin{lemma}[Mean--value theorem for Dirichlet polynomials]\label{MVMV} Let $T>0$ and $\tau$ be uniformly distributed in $[T,2T]$. Then
\begin{align*}
    \mathbb{E}\bigg\{\Big|\sum_{n\leq N}a(n)n^{-i\tau}\Big|^2\bigg\} 
    &=\bigg(1+O\Big(\frac{N}{T}\Big)\bigg)\sum_{n\leq N}|a(n)|^2\\
    &= \bigg(1+O\Big(\frac{N}{T}\Big)\bigg)\mathbb{E}\bigg\{\Big|\sum_{n\leq N}a(n)Z_n\Big|^2\bigg\}.
\end{align*}
\end{lemma}
\begin{proof}
    This follows from Corollary 3 in \cite{MVMV}.
\end{proof}
\begin{lemma}[Lemma 14 in \cite{FHK1}]\label{splittinglemma} 
    Let $h\in [-2,2]$ and 
    \[
        A(s)=\sum_{\substack{n\leq N \\ p|n\implies p\leq w}} \frac{a(n)}{n^s}\text{ and } B(s)=\sum_{\substack{n\leq N \\ p|n\implies p> w}}\frac{b(n)}{n^s}
    \]
    be Dirichlet polynomials with $N\leq T^{1/4}$. Then we have
    \begin{align*}
        &\E{|A(\tfrac{1}{2}+i\tau+ih)|^2|B(\tfrac{1}{2}+i\tau+ih)|^2}\\
        &=\left(1+O(T^{-1/2})\right)\E{|A(\tfrac{1}{2}+i\tau+ih)|^2}\E{|B(\tfrac{1}{2}+i\tau+ih)|^2}.
    \end{align*}
\end{lemma}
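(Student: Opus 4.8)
The plan is to apply the mean-value theorem for Dirichlet polynomials (Lemma \ref{MVMV}) three times: once to the product $AB$, and once each to $A$ and $B$ separately, then compare the resulting diagonal sums.

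First I would observe that $A(s)B(s)$ is itself a Dirichlet polynomial of length at most $N^2 \leq T^{1/2} \leq T$. More precisely, since $A$ is supported on $w$-smooth integers and $B$ on integers all of whose prime factors exceed $w$, every integer $n$ in the support of $AB$ factors \emph{uniquely} as $n = mk$ with $m$ in the support of $A$ and $k$ in the support of $B$; hence $A(s)B(s) = \sum_n c(n) n^{-s}$ with $c(n) = a(m)b(k)$ for this unique factorization. Replacing $\tau$ by $\tau + h$ amounts to multiplying each coefficient $c(n)$ by $n^{-ih}$, which leaves $|c(n)|$ unchanged, so Lemma \ref{MVMV} (applied with the polynomial $AB$ of length $\leq N^2$) gives
\[
    \E{|A(\tfrac{1}{2}+i\tau+ih)B(\tfrac{1}{2}+i\tau+ih)|^2} = \bigl(1 + O(N^2/T)\bigr)\sum_n \frac{|c(n)|^2}{n}.
\]
By the unique factorization, $\sum_n |c(n)|^2/n = \bigl(\sum_m |a(m)|^2/m\bigr)\bigl(\sum_k |b(k)|^2/k\bigr)$, the two sums running over the supports of $A$ and $B$ respectively.

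Next, applying Lemma \ref{MVMV} directly to $A$ and to $B$ (again using that the shift by $h$ does not affect the coefficient moduli) yields
\[
    \E{|A(\tfrac{1}{2}+i\tau+ih)|^2} = \bigl(1 + O(N/T)\bigr)\sum_m \frac{|a(m)|^2}{m}, \qquad \E{|B(\tfrac{1}{2}+i\tau+ih)|^2} = \bigl(1 + O(N/T)\bigr)\sum_k \frac{|b(k)|^2}{k}.
\]
Multiplying these two identities and comparing with the previous display, the main terms agree exactly, and since $N \leq T^{1/4}$ the error terms combine into $O(N^2/T) = O(T^{-1/2})$, which is the claimed bound.

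There is no serious obstacle here: this is a routine consequence of Lemma \ref{MVMV}. The only points requiring (minor) care are the bookkeeping that identifies the coefficients of the product polynomial so that its diagonal sum factorizes, and verifying the length bound $N^2 \leq T$ needed to invoke Lemma \ref{MVMV} for $AB$ — both immediate from $N \leq T^{1/4}$.
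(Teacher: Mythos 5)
Your proof is correct and is exactly the argument the paper has in mind: the text introduces the lemma as ``a direct consequence of the mean-value estimate for Dirichlet polynomials in lemma~\ref{MVMV}'' and gives no further proof, so there is no alternative argument to compare against. The key observations you make — that the support conditions on $A$ and $B$ force a unique factorization $n = mk$ in the product polynomial so its diagonal sum factorizes, that the shift by $h$ leaves the coefficient moduli unchanged, and that the length bound $N^2 \le T^{1/2}$ makes the error $O(N^2/T) = O(T^{-1/2})$ — are precisely the points being swept under the phrase ``direct consequence.''
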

\noindent Lastly, we use the following classical result for the second moment of $\zeta$ due to Hardy and Littlewood \cite{hardylittlewood}, see also Equation (1.79) in \cite{Ivic} for a quick proof of the upper bound.
\begin{lemma}\label{secondmomentzeta}For all $h\in [-2,2]$, we have $\E{|\zeta_\tau(h)|^2}\sim \log T$.
\end{lemma}

\section{Ballot theorem} {As in Equation \eqref{eq:Wdef}, let
\[
    \mathcal{W}_k = \sum_{1000\leq \ell\leq k}\mathcal{N}_\ell
\]
where each $\mathcal{N}_\ell$ is a centered, independent read Gaussian variable with variance $1/2$. The following \textit{ballot theorem} estimates the probability that this random walk ends around the value $w$ after $k$ steps, while having remained under a curved barrier at times $j\leq k$.
\begin{proposition}[Proposition 4 in \cite{FHK1}]\label{prop:ballot} Uniformly in $t\geq 1, 1\leq y\leq 2t, t/2\leq k\leq t$ and $m(k)+L_y(k)-4\leq w\leq m(k)+U_y(k)$ (defined in Equations \eqref{logbarrier} and \eqref{lowerbarrierl}), we have
\begin{align*}
    \P{\{\mathcal{W}_k\in (w,w+1]\}\cap_{1000<j\leq k}\{\mathcal{W}_j\leq m(j)+U_y(j)\}}\\
    \ll (y+1)(U_y(k)+m(k)-w+1)k^{-3/2}e^{-w^2/k}. 
\end{align*}
\end{proposition}}
\end{appendix}

\begin{acks}[Acknowledgments]
The authors are grateful to the referee for the comments that substantially improved the first version of the paper, and to Jon Keating and Christine Chang for insightful discussions. J. H. thanks Maksym Radziwiłł for sharing helpful resources on the subject. L.-P. A. is supported by the EPSRC grant EP/Z535990/1 and NSF award DMS 2153803, and J. H. is supported by the EPSRC Centre for Doctoral Training in Mathematics of Random Systems: Analysis, Modelling and Simulation (EP/S023925/1).
\end{acks}




\bibliographystyle{imsart-number} 
\bibliography{biblio}       

\end{document}